\newtheorem{thm}{Theorem}[section]
\newtheorem{lemma}[thm]{Lemma}
\newtheorem{prop}[thm]{Proposition}
\theoremstyle{definition}
\newtheorem{definition}[thm]{Definition}
\numberwithin{equation}{section}
\def\dys{\displaystyle}
\numberwithin{equation}{section}
\newcommand{\R}{\mathbb{R}}
\newcommand{\lam}{\lambda}
\def \R {\mathbb{R}}
\def \L {\mathcal{L}}
\def \dist {\mathrm{dist}}
\def \div {\mathrm{div}}
\def \E {\mathcal{E}}
\def\an{\alpha_n}
\def\bn{\beta_n}
\def\pn{p_n}
\def\vn{v_n}
\def\un{u_n}
\def\elle#1{L^{#1}(\Omega)}
\def\w#1#2{W_0^{#1,#2}(\Omega)}
\def\elle#1{L^{#1}(\Omega)}
\def\io{\int_{\Omega}}
\def\norma#1#2{\|#1\|_{\lower 4pt \hbox{$\scriptstyle #2$}}}
\def\un{u_n}
\def\ui{u_{\infty}}
\def\vi{v_{\infty}}
\def\an{\alpha_n}
\def\pn{p_n}
\def\bn{\beta_n}
\def\finedim
\def\gw{G_{\tilde{k}}(w_n)}
\def\R{I \!\!R}
\def\elle#1{L^{#1}(\Omega)}
\newcommand{\defeq}{\mathrel{\mathop:}=}
\begin{document}
	
\title[Local/nonlocal $p-$eigenvalue problem and its asymptotic limit as $p\to \infty$]{A system of local/nonlocal $p-$Laplacians: the eigenvalue problem and its asymptotic limit as $p \to \infty$}

\author[S. Buccheri, J.V. da Silva and L.H de Miranda]{S. Buccheri, J.V. da Silva and L.H de Miranda}

\address{Faculty of Mathematics - University of Vienna
\hfill\break \indent Oskar-Morgenstern-Platz 1, 1090 Vienna, Austria.}
\email[S. Buccheri]{stefano.buccheri@univie.ac.at}

\address{Departamento de Matem\'atica - Instituto de Matem\'{a}tica, Estat\'{i}stica e Computa\c{c}\~{a}o Cient\'{i}fica \hfill\break \indent Universidade Estadual de Campinas - UNICAMP
\hfill\break \indent Cidade Universit\'{a}ria Zeferino Vaz, 13083-859, Campinas - SP - Brazil.}
\email[J.V. da Silva]{jdasilva@unicamp.br}

\address{Departamento de Matem\'atica - Instituto de Ci\^{e}ncias Exatas - Universidade de Bras\'{i}lia
\hfill\break \indent Campus Universit\'{a}rio Darcy Ribeiro, 70910-900, Bras\'{i}lia - DF - Brazil.}
\email[L.H. de Miranda]{demiranda@unb.br}

\subjclass[2010]{35J60, 35B65}

\keywords{First eigenvalue problem, simplicity, local/nonlocal $p-$Laplacians, H\"{o}lder $\infty-$Laplacian and $\infty-$Laplacian}

\begin{abstract} In this work, given $p\in (1,\infty)$, we prove the existence and simplicity of the first eigenvalue $\lambda_p$ and its corresponding eigenvector $(u_p,v_p)$, for the following local/nonlocal PDE system \begin{equation}\label{Eq0}
\left\{
\begin{array}{rclcl}
-\Delta_p u + (-\Delta)^r_p u & = & \frac{2\alpha}{\alpha+\beta}\lam |u|^{\alpha-2}|v|^{\beta}u & \mbox{in} & \Omega \\
-\Delta_p v + (-\Delta)^s_p v& = & \frac{2\beta}{\alpha+\beta}\lam |u|^{\alpha}|v|^{\beta-2}v & \mbox{in} & \Omega \\
  u& =& 0&\text{ on } & \mathbb{R}^N \setminus \Omega\\
  v& =& 0&\text{ on } & \mathbb{R}^N \setminus \Omega,
\end{array}
\right.
\end{equation}
where $\Omega\subset \R^N$ is a bounded open domain, $0<r, s<1$ and $\alpha(p)+\beta(p) = p$.
Moreover, we address the asymptotic limit as $p \to \infty$, proving the explicit geometric characterization of the corresponding first $\infty-$eigenvalue, namely $\lambda_{\infty}$, and the uniformly convergence of the pair $(u_p,v_p)$ to the $\infty-$eigenvector $(u_{\infty},v_{\infty})$. Finally, the triple $(u_{\infty},v_{\infty},\lambda_{\infty})$ verifies, in the viscosity sense, a limiting PDE system.
\end{abstract}	
\maketitle

\section{Introduction}

In this manuscript we study the following eigenvalue problem for a system of equations driven by the combination between quasilinear elliptic operators with $p-$structure, having simultaneous local and nonlocal diffusion
\begin{equation}\label{Eq1}
\left\{
\begin{array}{rclcl}
-\Delta_p u + (-\Delta)_p^r u & = & \frac{2\alpha}{\alpha+\beta}\lam |u|^{\alpha-2}|v|^{\beta}u & \mbox{in} & \Omega \\
-\Delta_p v + (-\Delta)_p^s v& = & \frac{2\beta}{\alpha+\beta}\lam |u|^{\alpha}|v|^{\beta-2}v & \mbox{in} & \Omega \\
  u& =& 0&\text{ on } & \mathbb{R}^N \setminus \Omega\\
  v& =& 0&\text{ on } & \mathbb{R}^N \setminus \Omega,
\end{array}
\right.
\end{equation}
where $\Omega\subset \mathbb{R}^N$ is a bounded, open connected domain, $p\in(1,\infty)$ and the parameters $r,s,\alpha$ and $\beta$ satisfy
\begin{equation}\label{parass}
   r,s\in(0,1) \ \ \mbox{and} \ \ \ \alpha(p),\beta(p)\ge1 \ \ \ \mbox{such that } \alpha(p) + \beta(p) = p.
\end{equation}

We recall that
\begin{equation}\label{defop}
  \Delta_p u=\mbox{div}(|\nabla u|^{p-2}\nabla u) \ \ \ \mbox{and} \ \ \ (-\Delta)_p^r u \defeq 2 \text{P.V.} \int_{\mathbb{R}^N} \frac{|u(x)-u(y)|^{p-2}(u(x)-u(y))}{|x-y|^{N+rp}}dy\footnote{The constant $2$ is related to the natural definition of weak solution for equations involving the fractional Laplacian.},
\end{equation}
where the integro-differential operator in \eqref{defop} is considered in the Cauchy principal value sense.

The main purpose of the second part of this manuscript concerns the study of the asymptotic behavior of any family of weak solutions $(u_p, v_p, \lambda_p)$ to \eqref{Eq1} as $p \to \infty$, as well as the geometric characterization of corresponding $\infty-$eigenvalue. Moreover, in contrast to purely local or nonlocal systems, the present case imposes some extra accurate analysis which we will clarify soon.

\subsection{An overview on the existing literature}

As it is very well-known, the local quasilinear operator in \eqref{defop}, namely the \textit{$p-$Laplacian}, arises from various phenomena in applied mathematics as reaction-diffusion and absorption processes, formation of dead-cores, non-newtonian flows and game theoretical methods in PDEs, just to mention a few (see D\'{i}az's monograph \cite{Diaz}, or   \cite{BdaSR19}, \cite{daSS18} and the references therein). Moreover, elliptic integro-differential operators, like the one in \eqref{defop}, have an intrinsic mathematical significance and a strong relation with a large variety of applications.  For instance they appear in  stochastic processes of L\`evy type, image processing and in a number of nonlocal diffusion and free boundary problems. The interested reader is referred to \cite{BV16}, \cite{caffa}, \cite{daSR19}, \cite{daSS19}, \cite{GiO}, \cite{ILPS}, \cite{MS16},  and references therein.

Furthermore, eigenvalue problems have been a classical topic of investigation and have received considerable attention along the past decades by several authors. Knowing that it is impossible to list a comprehensive literature on this theme, we just mention their strong relationship with bifurcation theory, resonance problems, spectral optimization problems and also with applied sciences, such as fluid and quantum mechanics. Without intention of being complete,  see \cite{KL}, \cite{Le}, \cite{Lindq90}, \cite{RSdaS18} and the references therein for further details.

In addition, since the seminal work \cite{BDM}, there has been an increasing interest for the limiting behaviour of problems related to the $p-$Laplacian operator as $p\to\infty$. Indeed, in \cite{BDM} the authors investigate the behaviour of the solutions to $-\Delta_p u_p= f$, with Dirichlet boundary conditions, as $p$ goes to infinity. Later on, in \cite{FIN99} and \cite{JLM99} the infinite-eigenvalue problem is addressed by taking the limit of
\begin{equation}\label{eq.p}
\left\{
\begin{array}{rclcl}
  -\Delta_p u_p & = & \lambda_p  |u_p|^{p-2} u_p & \text{in} & \Omega \\
  u_p & = & 0 & \text{on} & \partial \Omega,
\end{array}
\right.
\end{equation}
as $p\to \infty$. Remark that $\lambda_p$, the first eigenvalue of \eqref{eq.p}, is isolated, simple and characterized variationally by minimizing the following \textit{Rayleigh quotient}
 \begin{equation}\label{1er.p}
   \displaystyle \lambda_p \defeq \inf_{u \in W^{1,p}_0(\Omega) \setminus\{0\}}
   \frac{\| \nabla u\|_{L^p(\Omega)}^p}{\|u\|_{L^p(\Omega)}^p}>0.
\end{equation}
By letting $p\to \infty$ in \eqref{1er.p}, it is proved in \cite{JLM99} the following geometric characterization for the {\bf first $\infty-$eigenvalue}
\begin{equation} \label{lam.inf}
\lam_{\infty} \defeq \lim_{p\to\infty} \left(\lam_{ p}\right)^{\frac{1}{p}} =\inf_{v \in W^{1, \infty}_0(\Omega)\setminus\{0\}} \frac{\|\nabla v\|_{L^{\infty}(\Omega)}}{\|v\|_{L^{\infty}(\Omega)}}= \frac{1}{\mathrm{R}}.
\end{equation}
where $\displaystyle \mathrm{R} \defeq \max_{x \in \Omega} \dist(x, \partial \Omega)$, the radius of the largest ball contained in $\Omega$. Further, there exists a subsequence of $\{u_p\}_{p>1}$  that converges uniformly in $\Omega$ to $u_\infty$, viscosity solution of
\begin{equation}\label{eq.infty.p}
\left\{
\begin{array}{rclcl}
  \min\Big\{-\Delta_\infty u_\infty, |\nabla u_\infty|-\lam_{\infty}u_\infty\Big\} & = & 0 & \text{in} & \Omega \\
  v_\infty & = & 0 & \text{on} & \partial \Omega.
\end{array}
\right.
\end{equation}
In the literature $u_{\infty}$ called the \textbf{$\infty-$ground state} or \textbf{$\infty-$eigenfuction} associated to  $\lam_{\infty}$.
In general, \eqref{eq.infty.p} has multiplicity of solutions, what is intrinsically connected to the geometry of $\Omega$, see \cite{HSY}, \cite{JLM99},  \cite{JLM01} and \cite{Yu}. {Finally, we must also quote \cite{RS15} and \cite{RS16} concerning the asymptotic limit as $p \to \infty$ of the first eigenvalue for the $p-$Laplacian with Neumann and mixed boundary conditions}.

On the other hand, in \cite{FP14}, among other results, the authors obtain existence and simplicity of the first eigenvalue for a class of nonlocal operators whose model is
\begin{equation}\label{26-11}
\left\{
\begin{array}{rclcl}
  (-\Delta)^s_p u & = & \lambda  |u|^{p-2} u & \text{in} & \Omega \\
  u & = & 0 & \text{on} & \R^N \setminus \Omega,
\end{array}
\right.
\end{equation}
namely, the eigenvalue problem for the fractional $p-$Laplacian. In this fractional scenario, the authors of \cite[Proposition 20 and Section 8]{LL14} perform a complete study of the asymptotic limit as $p \to \infty$ of \eqref{26-11}, establishing in the nonlocal setting similar results to \eqref{lam.inf} and \eqref{eq.infty.p}.\\
Let us add that the same type of analysis has been carried out for systems driven by local or nonlocal $p-$Laplacians  in \cite{BRS} and \cite{DelPR19}, respectively. {Lastly, we should also quote \cite{DelPRSS15}, where the authors find an interpretation via optimal mass transport theory for limits of eigenvalue problems for the fractional $p-$Laplacian as $p \to \infty$}.

Finally, in \cite{DelPFR} the following local/nonlocal eigenvalue problem has been addressed
\begin{equation}
\nonumber
\left\{
\begin{array}{rclcl}
  \displaystyle -\Delta_p u - \int_{\mathbb{R}^N} \mathcal{J}(x-y)|u(x)-u(y)|^{p-2}(u(y)-u(x))dy & = & \lambda|u|^{p-2}u & \mbox{in} & \Omega \\
  u & = & 0 & \mbox{on} & \mathbb{R}^N\setminus \Omega,
\end{array}
\right.
\end{equation}
where $\mathcal{J}\colon \mathbb{R}^N \to \mathbb{R}_{+}$ is a nonsingular, radially symmetric, nonnegative and \emph{compactly supported} kernel. We point out that, also in this case, a geometric characterization of the limit eigenvalue is obtained, together with a limit equation having a more complex structure than in \eqref{eq.infty.p}.\\

Despite the latter reference, see also \cite{daSS19-2} and \cite{GS19}, problems with simultaneous local and nonlocal characters have been far less studied in the literature. In this framework, the main contributions of the present paper consist in the investigation of the existence and simplicity for the first variational eigenvalue of system \eqref{Eq1}, as well as its asymptotic behaviour, if $p\to \infty$. Roughly speaking, with respect to the previous literature, the main differences in dealing with \eqref{Eq1} arise from the interaction between the local and nonlocal operator and the fact that the kernel of the fractional $p-$Laplacian is singular and not compactly supported.

Indeed, before the analysis of the limit case, we prove that for any fixed $p \in (1,\infty)$ there exists a triple $(u_p,v_p,\lambda_p)$, solution to system \eqref{Eq1}, where $u_p,v_p>0$ in $\Omega$, and  $\lambda_p$ is the corresponding variational first eigenvalue. We also prove that $\lambda_p$ is simple, which means that,
if $(u,v)$ and $(\tilde u, \tilde v)$ are two pairs of solutions of \eqref{Eq1} with $\lambda=\lambda_p$, then there exists $k\in\mathbb{R}$ such that $(\tilde u, \tilde v)=k(u,\pm v)$. We emphasize that, in general, obtaining the simplicity for the first eigenvalue for nonlinear systems is not a trivial task. For this reason, we had to prove a version of the Maximum Principle and to develop other auxiliary results which allow us to overcome such issues, see Sections \ref{opr} \& \ref{Maximum Principle} and the end of the proof of Theorem \ref{MThm1}.

After that, by using the variational characterization of $\lambda_p$, we prove that
\[
  \left(\lambda_p\right)^{\frac{1}{p}}\to  \lambda_\infty := \max\left\{ \frac{1}{R} , \frac{1}{R^{\Gamma r+(1-\Gamma)s}}\right\}, \mbox{ as } p \to\infty,
\]
for  $\Gamma\in(0,1)$ such that
\[
\dfrac{\alpha(p)}{p}\to \Gamma \mbox{ and } \dfrac{\beta(p)}{p}\to 1-\Gamma, \mbox{ as } p \to\infty,
\]
where $R$ is the radius of the largest ball contained in $\Omega$, and $r$, $s$, $\alpha$ and $\beta$ are defined in \eqref{parass}. Observe that the previous limit exhibits the interplay between the geometry of the domain and the local and nonlocal operators, see Theorem \ref{MThm2} and the comments below. Finally, in Theorem \ref{MThm4} we show that $(u_p,v_p)$ converges uniformly to $(u_\infty,v_\infty)$ a viscosity solution to the  asymptotic equation associated to \eqref{Eq1}, see \eqref{EqLim} for the precise formulation.

We remark that the study of the aforementioned issues may give some insight on the connection between problems that admit distributional formulation with their limiting counterpart without this kind of structure. For a better comprehension on this subject we refer the reader to \cite{BRS}, \cite{CharPar13}, \cite{CharPer07}, \cite{daSR19}, \cite{daSRS19}, \cite{daSS19}, \cite{daSS19-2}, \cite{DelPFR}, \cite{DelPR19}, \cite{DiCKP}, \cite{FerLla}, \cite{JLM99} or \cite{RSdaS18}.

\subsection{Statement of the main results}\label{StemMR}

We address problem \eqref{Eq1} using variational methods. Let us then consider the energy functional $\mathcal{J}_p: W^{1,p}_0(\Omega)\times W^{1,p}_0(\Omega)\to\mathbb{R}$ given by
\begin{equation}\label{11-07}
\displaystyle \mathcal{J}_p(w, z) \defeq \frac{1}{2}\frac{\|\nabla w\|_{\elle p}^p+[\bar  w]_{p,s}^p+\|\nabla z\|_{\elle p}^p+[\bar  z]_{p,r}^p}{\io|w|^\alpha|z|^\beta dx},
\end{equation}
where $\bar  w$ is the extension to $0$ of $w$ in all $\mathbb{R}^N$ and
\[
  [\bar  w]_{p,s}^p=\int\int_{\mathbb{R}^{2N}} \dfrac{|\bar  w(x)-\bar  w(y)|^p}{|x-y|^{N+sp}}dxdy
\]
is the \textit{Gagliardo seminorm} of $w$ (same definitions for $\bar  z$ and $[\bar  z]_{p,s}^p$). Since $[\bar  w]_{p,s}\le C\|\nabla w\|_{\elle p}$ for any $w\in W^{1,p}_0(\Omega)$ (see Lemma \ref{nofrac}), the functional above is well-defined and in the sequel, with a slight abuse of notation, we often write $[w]_{p,s}$ instead of $[\bar  w]_{p,s}$.

Our first step is to show that there exist $(\lambda_p,u_p,v_p)$ such that
\begin{equation}\label{p-Min}\tag{{\bf $p-$Min}}
\begin{array}{rcl}
  \lambda_p & = & \mathcal{J}_p(u_p,v_p) \\
   & = &  \displaystyle  \min\left\{ \mathcal{J}_p (w,z): (w, z) \in \w1p\times\w1p  \quad \text{and}\quad wz\neq0\right\}.
\end{array}
\end{equation}
Thanks to the Lagrange Multipliers Theorem and the definition of $\lambda_p$, it follows that  $\lambda_p$ is the first, smallest variational eigenvalue of \eqref{Eq1} and $(u_p,v_p)$ is the associated eigenpair, namely $(u_p,v_p)$ is a weak solution of \eqref{Eq1} with $\lambda=\lambda_p$ (see Definition \ref{nyc} for the precise definition of weak solution). More in detail we have the following Theorems.

\begin{thm}[{\bf Existence of solutions}]\label{MThm1} Set $p\in(1,\infty)$ and assume \eqref{parass}. Then $\lambda_p$ is reached at $(u_p,v_p)$, with $u_p$ and $v_p$ not changing sign $a.e.$ in $\Omega$. Moreover $\lambda_p$ is the the first smallest variational eigenvalue for problem \eqref{Eq1} and $(u_p,v_p)$ the relative eigenpair.
\end{thm}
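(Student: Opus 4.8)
The plan is to realize $\lambda_p$ as the minimum of the Rayleigh-type quotient $\mathcal{J}_p$ by the direct method of the calculus of variations, and then to identify the Euler--Lagrange system of $\mathcal{J}_p$ via Lagrange multipliers. First, $\mathcal{J}_p$ is well defined and bounded away from $0$: the Gagliardo seminorms in the numerator are finite by Lemma~\ref{nofrac}, while Young's inequality gives $\io|w|^{\alpha}|z|^{\beta}\,dx\le\frac{\alpha}{p}\|w\|_{\elle p}^{p}+\frac{\beta}{p}\|z\|_{\elle p}^{p}$, so by Poincar\'e's inequality in $\w1p$ the denominator is controlled by the numerator and $\lambda_p=\inf\mathcal{J}_p\ge c_0>0$. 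Since, using $\alpha+\beta=p$, both numerator and denominator are positively $p$-homogeneous, $\mathcal{J}_p$ is $0$-homogeneous and one may pick a minimizing sequence $(w_n,z_n)$ with $\io|w_n|^{\alpha}|z_n|^{\beta}\,dx=1$. Then $\|\D w_n\|_{\elle p}^{p}+[\bar w_n]_{p,s}^{p}+\|\D z_n\|_{\elle p}^{p}+[\bar z_n]_{p,r}^{p}=2\mathcal{J}_p(w_n,z_n)\to 2\lambda_p$, so $(w_n,z_n)$ is bounded in $\w1p\times\w1p$.

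Up to a subsequence, $w_n\rightharpoonup u_p$ and $z_n\rightharpoonup v_p$ weakly in $\w1p$, and by the compact embedding $\w1p\hookrightarrow\hookrightarrow\elle p$ (valid for bounded $\Omega$) also strongly in $\elle p$ and a.e.\ in $\Omega$. To pass to the limit in the constraint, note that the superposition maps $w\mapsto|w|^{\alpha}$ and $z\mapsto|z|^{\beta}$ are continuous from $\elle p$ into $\elle{p/\alpha}$ and $\elle{p/\beta}$ respectively; since $\frac{\alpha}{p}+\frac{\beta}{p}=1$, the generalized H\"older inequality then yields $\io|w_n|^{\alpha}|z_n|^{\beta}\,dx\to\io|u_p|^{\alpha}|v_p|^{\beta}\,dx=1$, so in particular $u_pv_p\ne0$ and the normalization is preserved. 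For the numerator, $\|\D u_p\|_{\elle p}^{p}\le\liminf_n\|\D w_n\|_{\elle p}^{p}$ by weak lower semicontinuity of the norm, while applying Fatou's lemma to the a.e.\ convergent nonnegative integrands $|\bar w_n(x)-\bar w_n(y)|^{p}/|x-y|^{N+sp}$ on $\R^{2N}$ gives $[\bar u_p]_{p,s}^{p}\le\liminf_n[\bar w_n]_{p,s}^{p}$, and likewise for $z_n$ with exponent $r$. Hence $\mathcal{J}_p(u_p,v_p)\le\liminf_n\mathcal{J}_p(w_n,z_n)=\lambda_p$, and together with $\mathcal{J}_p(u_p,v_p)\ge\lambda_p$ the infimum is attained at $(u_p,v_p)$. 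Finally, $\mathcal{J}_p(|u_p|,|v_p|)\le\mathcal{J}_p(u_p,v_p)$, because $\|\D|u_p|\|_{\elle p}=\|\D u_p\|_{\elle p}$, the pointwise inequality $\bigl||u_p(x)|-|u_p(y)|\bigr|\le|u_p(x)-u_p(y)|$ gives $[|\bar u_p|]_{p,s}\le[\bar u_p]_{p,s}$ (similarly for $v_p$), and the denominator is unchanged; so one may take $u_p,v_p\ge0$ a.e.\ in $\Omega$, i.e.\ not changing sign.

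It remains to identify $\lambda_p$ as the first variational eigenvalue. Let $\Phi$ and $\Psi$ denote, respectively, one half of the numerator and the denominator of $\mathcal{J}_p$; these are of class $C^{1}$ on $\w1p\times\w1p$ (here $\alpha,\beta\ge1$ is used). Since, after normalization, $(u_p,v_p)$ minimizes $\Phi$ on the $C^{1}$-manifold $\{\Psi=1\}$ and $\langle\Psi'(u_p,v_p),(u_p,v_p)\rangle=p\Psi(u_p,v_p)=p\ne0$, the Lagrange Multipliers Theorem provides $\mu\in\R$ with $\Phi'(u_p,v_p)=\mu\,\Psi'(u_p,v_p)$; pairing this with $(u_p,v_p)$ and invoking Euler's identity for the $p$-homogeneous $\Phi,\Psi$ gives $p\lambda_p=p\Phi(u_p,v_p)=\mu\,p\Psi(u_p,v_p)=\mu p$, hence $\mu=\lambda_p$. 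Computing $\Phi'$ and $\Psi'$ explicitly and testing against $(\varphi,0)$ and $(0,\psi)$ separately, one recovers exactly the weak formulation of \eqref{Eq1} with $\lambda=\lambda_p$ (the constant $2$ in \eqref{defop} is precisely what symmetrizes the nonlocal double integrals), so $(u_p,v_p)$ is a weak eigenpair in the sense of Definition~\ref{nyc}, and $\lambda_p$ is by construction variational. Conversely, if $(u,v)$ is any weak eigenpair with eigenvalue $\lambda$, then testing the two equations of \eqref{Eq1} with $u$ and $v$ respectively and adding shows $\mathcal{J}_p(u,v)=\lambda$; moreover a nontrivial eigenpair must have $\io|u|^{\alpha}|v|^{\beta}\,dx>0$, since otherwise the same testing argument forces $u\equiv v\equiv0$. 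Therefore $\lambda=\mathcal{J}_p(u,v)\ge\lambda_p$, i.e.\ $\lambda_p$ is the smallest eigenvalue and $(u_p,v_p)$ the associated eigenpair.

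The abstract scheme above is standard; the genuinely delicate point is the behaviour of the coupling term $\io|w_n|^{\alpha}|z_n|^{\beta}\,dx$ in the limit, which must not degenerate, so that the limit pair still satisfies $u_pv_p\ne0$. This is where the \emph{compactness} of $\w1p\hookrightarrow\elle p$ and the fact that $\alpha+\beta=p$ is subcritical are essential, together with the weak lower semicontinuity of the two singular nonlocal seminorms $[\bar{\,\cdot\,}]_{p,s}^{p}$ and $[\bar{\,\cdot\,}]_{p,r}^{p}$ appearing in the numerator. (If one prefers to avoid H\"older, convergence of the denominator follows from a.e.\ convergence and the generalized dominated convergence theorem, with dominating sequence $\frac{\alpha}{p}|w_n|^{p}+\frac{\beta}{p}|z_n|^{p}\to\frac{\alpha}{p}|u_p|^{p}+\frac{\beta}{p}|v_p|^{p}$ in $L^{1}(\Omega)$.)
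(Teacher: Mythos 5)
Your proof is correct and follows essentially the same route as the paper: direct minimization of the normalized constrained functional, boundedness and weak/a.e.\ convergence, lower semicontinuity (Fatou for the Gagliardo terms), and then Lagrange multipliers together with testing the system by $(u,v)$ to identify $\lambda_p$ as the smallest variational eigenvalue. The only substantive difference is the sign claim: you produce a non-changing-sign minimizer by passing to $(|u_p|,|v_p|)$, which suffices for the statement as written, whereas the paper argues that \emph{any} minimizer has fixed-sign components (via the strict inequality $\bigl||a|-|b|\bigr|<|a-b|$ when $ab<0$, which would strictly decrease the Gagliardo seminorms) and then upgrades to $u_p,v_p>0$ a.e.\ through the Strong Maximum Principle of Proposition \ref{smp}, the strict positivity being what is actually invoked later in the simplicity proof and in the limit $p\to\infty$.
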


\begin{thm}[{\bf Simplicity of the eigenvalue}]\label{MThsimp}
 Under the assumptions of Theorem \ref{MThm1} it holds true that $\lambda_p$ is simple, i.e. if $(\tilde {u}_p,\tilde {v}_p)$ is another eigenpair associated to $\lambda_p$, then there exists a constant $k\neq0$ such that either $(\tilde {u}_p, \tilde {v}_p)=k(u_p, v_p)$ or $(\tilde {u}_p, \tilde {v}_p)=k(u_p, -v_p)$.
\end{thm}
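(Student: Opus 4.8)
The plan is to transplant the hidden-convexity argument behind the simplicity result of \cite{Lindq90} to the system, coupling it with a convexity property of the weight $|u|^\alpha|v|^\beta$. I would first reduce to positive eigenpairs: testing the equations of \eqref{Eq1} with the eigenpair itself shows that every eigenpair associated to $\lambda_p$ is in fact a minimizer of $\mathcal{J}_p$, and since $\|\nabla|w|\|_{\elle p}=\|\nabla w\|_{\elle p}$ and $[|w|]_{p,s}\le[w]_{p,s}$, any minimizer may be replaced by the pair of absolute values of its components, which is a nonnegative minimizer and hence an eigenpair; the Maximum Principle of Section \ref{Maximum Principle} then makes its components strictly positive on the connected open set $\Omega$. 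Since changing the sign of either component of a solution of \eqref{Eq1} again yields a solution, and \eqref{Eq1} is invariant under $(u,v)\mapsto(\mu u,\mu v)$, it suffices to prove that \emph{two eigenpairs of $\lambda_p$ with all components strictly positive in $\Omega$ are proportional}: granting this, $|\tilde u_p|=c|u_p|$ and $|\tilde v_p|=c|v_p|$ for some $c>0$, and the constant sign of each component forces $\tilde u_p=\pm c\,u_p$, $\tilde v_p=\pm c\,v_p$, i.e.\ $(\tilde u_p,\tilde v_p)=k(u_p,\pm v_p)$ for some $k\ne0$. So from now on let $(U,V)$, $(\tilde U,\tilde V)$ be eigenpairs of $\lambda_p$ with $U,V,\tilde U,\tilde V>0$ in $\Omega$; after rescaling assume also $\io U^\alpha V^\beta\,dx=\io\tilde U^\alpha\tilde V^\beta\,dx=1$, so that the numerators of $2\mathcal{J}_p$ at both pairs equal $2\lambda_p$.

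The core step is to test $\mathcal{J}_p$ along the interpolation path
\[
\xi_t\defeq\big((1-t)U^p+t\,\tilde U^p\big)^{1/p},\quad \eta_t\defeq\big((1-t)V^p+t\,\tilde V^p\big)^{1/p},\quad t\in[0,1],
\]
for which one checks that $\xi_t,\eta_t\in\w1p$ and $\xi_t\eta_t\not\equiv0$. Three inequalities are used. (a) Hidden convexity of the local $p$-Dirichlet energy, $\|\nabla\xi_t\|_{\elle p}^p\le(1-t)\|\nabla U\|_{\elle p}^p+t\|\nabla\tilde U\|_{\elle p}^p$ (and likewise for $\eta_t$), together with the fact that equality for a single $t\in(0,1)$ forces the two functions to be proportional, cf.\ \cite{Lindq90}. (b) The analogous hidden convexity for the Gagliardo seminorms, $[\xi_t]_{p,s}^p\le(1-t)[U]_{p,s}^p+t[\tilde U]_{p,s}^p$ and the version with exponent $r$ for $\eta_t$, valid in the fractional setting, cf.\ \cite{FP14}. (c) The pointwise concavity on $[0,\infty)^2$ of the Cobb--Douglas function $f(a,b)\defeq a^{\alpha/p}b^{\beta/p}$ --- legitimate since $\tfrac\alpha p+\tfrac\beta p=1$ --- together with the fact that $f$ is affine only along the half-lines issuing from the origin; applying the concavity inequality to the vectors $(U^p,V^p)$ and $(\tilde U^p,\tilde V^p)$ and integrating gives $\io\xi_t^\alpha\eta_t^\beta\,dx\ge(1-t)\io U^\alpha V^\beta\,dx+t\io\tilde U^\alpha\tilde V^\beta\,dx=1$.

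Combining (a)--(c) with the minimality $\mathcal{J}_p(\xi_t,\eta_t)\ge\lambda_p$ and $\lambda_p>0$ yields, for each $t\in(0,1)$,
\[
\begin{aligned}
2\lambda_p &\le 2\lambda_p\io\xi_t^\alpha\eta_t^\beta\,dx \le 2\,\mathcal{J}_p(\xi_t,\eta_t)\io\xi_t^\alpha\eta_t^\beta\,dx\\
&=\|\nabla\xi_t\|_{\elle p}^p+[\xi_t]_{p,s}^p+\|\nabla\eta_t\|_{\elle p}^p+[\eta_t]_{p,r}^p \le 2\lambda_p,
\end{aligned}
\]
so every inequality is an equality: the numerator equals $2\lambda_p$, forcing equality in the hidden-convexity inequalities, while $\io\xi_t^\alpha\eta_t^\beta\,dx=1$ together with the pointwise bound in (c) forces the Cobb--Douglas inequality to hold with equality a.e.\ in $\Omega$. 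Equality in (a) for $\xi_t$ and for $\eta_t$ gives $U=c\,\tilde U$ and $V=c'\tilde V$ for constants $c,c'>0$; a.e.\ equality in (c), using that $(U^p,V^p)\ne(0,0)$ in $\Omega$ and that $f$ is affine only along rays, gives $(\tilde U^p,\tilde V^p)\parallel(U^p,V^p)$ a.e., i.e.\ $\tilde U/U=\tilde V/V$ a.e.\ in $\Omega$. Hence $1/c=1/c'$, so $c=c'$ and $\tilde U=U/c$, $\tilde V=V/c$; the normalization $\io\tilde U^\alpha\tilde V^\beta\,dx=1$ then forces $c=1$, i.e.\ $(\tilde U,\tilde V)=(U,V)$. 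Undoing the reductions of the first paragraph completes the proof.

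I expect the equality analysis to be the main obstacle. On one side one needs the sharp equality statement in the hidden convexity both for the local term and for the \emph{singular, non-compactly-supported} fractional seminorm in $\mathcal{J}_p$ (or else to arrange the argument so that only the local equality case is invoked for the proportionality step). On the other side --- and this is where the genuinely \emph{coupled} structure of \eqref{Eq1} enters --- one must combine that information with the equality case of the Cobb--Douglas inequality so as to merge the two a priori independent constants $c$ and $c'$ into one; a decoupled right-hand side would not produce this second mechanism. Throughout, the strict positivity of the first eigenfunctions granted by the Maximum Principle is what legitimizes the pointwise operations ($p$-th roots, the ratios $\tilde U/U$, the ``affine along rays'' dichotomy) on all of $\Omega$.
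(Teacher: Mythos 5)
Your proposal is correct and follows essentially the same route as the paper: show every eigenpair of $\lambda_p$ minimizes $\mathcal{J}_p$, interpolate two positive minimizers by $p$-th power means (the paper takes the midpoint $t=\tfrac12$ via Lemmas \ref{16-8} and \ref{bigin}, you take a general $t$ and quote the fractional hidden convexity from the literature instead of the elementary pointwise inequality of the Appendix), and use the Cobb--Douglas/power-mean inequality --- exactly Lemma \ref{18-8} --- whose equality case merges the two proportionality constants coming from the local hidden-convexity equality case. The differences (general $t$, normalization, citation of \cite{FP14} rather than Lemma \ref{bigin}) are cosmetic, and your closing remarks about where equality cases are needed match the paper's actual mechanism.
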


Let us take now three increasing sequences $\pn,\an,\bn\to\infty$ and assume
\begin{equation}\label{pitt}
  \displaystyle \an+\bn=\pn, \qquad \text{and} \qquad \lim_{n\to\infty} \frac{\an}{\pn}=\Gamma \,\,\,\,\, \text{and} \,\,\,\,\, \lim_{n\to\infty}\frac{\bn}{\pn}=1-\Gamma.
\end{equation}
Theorem \ref{MThm1} assures the existence of $(\lambda_{n},u_{p_n},v_{p_n})=(\lambda_{n},\un,\vn)$, solution of \eqref{Eq1} for any $n\in\mathbb{N}$, such that $\un,\vn>0$ $a.e.$ in $\Omega$ and
\[
  \displaystyle \io|\un|^{\an}|\vn|^{\bn} dx=1.
\]
Our next result says that $(\lambda_n)^{\frac{1}{\pn}}$ converges to the first eigenvalue of the limiting functional
\begin{equation}\label{inffunc}
\mathcal{J}_\infty(w, z) \defeq \frac{\max\left\{\|\nabla w\|_{\elle \infty},|w|_{r},\|\nabla z\|_{\elle \infty},|z|_{s}\right\}}{\|w^{\Gamma}z^{1-\Gamma}\|_{L^{\infty}(\Omega)}},
\end{equation}
where $\displaystyle |w|_{r}=\sup_{x,y\in\Omega \atop{x \neq y}}\dys \frac{|w(x)-w(y)|}{|x-y|^{s}}$ (same definition for $|z|_{r}$), and that $(\un,\vn)$ uniformly converge to the relative minimizer.

\begin{thm}[{\bf Limiting minimizers and geometric characterization of $\lambda_{\infty}$}]\label{MThm2} Assuming \eqref{pitt} we have that
\[
    \lambda_\infty \defeq \lim_{n \to \infty} {\left(\lambda_n \right)}^{\frac{1}{p_n}} \\
   =  \max\left\{ \frac{1}{R} , \frac{1}{R^{\Gamma r+(1-\Gamma)s}}\right\},
\]
where $\displaystyle R \defeq \max_{x \in \Omega} \dist(x, \partial \Omega)$. Moreover there exists $(u_{\infty}, v_{\infty}) \in W_0^{1, \infty}(\Omega)\times W_0^{1,\infty}(\Omega)$, such that $\|\ui^{\Gamma}\vi^{\Gamma-1}\|_{\elle{\infty}}=1$, $(u_p, v_p) \to (u_{\infty}, v_{\infty})$ uniformly in $\Omega$ (up to subsequences) and
$$
\begin{array}{rcl}
   \lambda_\infty & = & \mathcal{J}_{\infty}(u_{\infty}, v_{\infty})\\
   &  = &\displaystyle \min\left\{ \mathcal{J}_{\infty} (w,z): (w, z) \in W_0^{1, \infty}(\Omega)\times W_0^{1,\infty}(\Omega)\right\}.
\end{array}
$$
\end{thm}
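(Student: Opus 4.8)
The plan is to (i) compute $\inf\mathcal{J}_\infty$ geometrically and show it equals $\lambda_\infty$, (ii) obtain uniform convergence of the normalized solutions $(\un,\vn)$, and (iii) squeeze $(\lambda_n)^{1/p_n}$ between $\limsup\le\inf\mathcal{J}_\infty$ (test pair) and $\liminf\ge\inf\mathcal{J}_\infty$ (compactness).

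\emph{Step 1: $\inf\mathcal{J}_\infty=\lambda_\infty$, attained.} Put $d(x)\defeq\dist(x,\partial\Omega)$, so $d\in W^{1,\infty}_0(\Omega)$, $|\nabla d|=1$ a.e., $\max_{\overline\Omega}d=R$, and $[\bar d]_{C^{0,r}(\mathbb{R}^N)}=R^{1-r}$ (because $\bar d$ is globally $1$-Lipschitz with $0\le\bar d\le R$ and $t\mapsto\min\{t,R\}/t^{r}$ is maximized at $t=R$, the value being realized along a segment from a point where $d=R$ to a nearest boundary point); similarly with $s$. For the lower bound take admissible $(w,z)$; since replacing $(w,z)$ by $(|w|,|z|)$ does not increase $\mathcal{J}_\infty$, assume $w,z\ge0$ and pick $x^{*}\in\Omega$ attaining $\|w^{\Gamma}z^{1-\Gamma}\|_{L^\infty(\Omega)}$. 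The segment from $x^{*}$ to a nearest boundary point lies in $\Omega$, so integrating $\nabla w$ along it gives $w(x^{*})\le\|\nabla w\|_{L^\infty}\,d(x^{*})$, and testing the Hölder quotient on that pair gives $w(x^{*})\le|w|_{r}\,d(x^{*})^{r}$; analogously for $z$. With $M\defeq\max\{\|\nabla w\|_{L^\infty},|w|_{r},\|\nabla z\|_{L^\infty},|z|_{s}\}$ and $d(x^{*})\le R$ this yields $w(x^{*})\le M\min\{R,R^{r}\}$, $z(x^{*})\le M\min\{R,R^{s}\}$, hence
\[
\mathcal{J}_\infty(w,z)\ \ge\ \frac{1}{\bigl(\min\{R,R^{r}\}\bigr)^{\Gamma}\bigl(\min\{R,R^{s}\}\bigr)^{1-\Gamma}}\ =\ \lambda_\infty ,
\]
the last identity being checked by distinguishing $R\ge1$ from $R<1$. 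For the reverse inequality one evaluates $\mathcal{J}_\infty(a\,d,b\,d)$ and minimizes over $a,b>0$; the minimum, attained when $a\max\{1,R^{1-r}\}=b\max\{1,R^{1-s}\}$, equals exactly $\lambda_\infty$. Thus $\inf\mathcal{J}_\infty=\lambda_\infty$, attained at some $(a_0 d,b_0 d)$.

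\emph{Step 2: compactness.} From $\io|\un|^{\an}|\vn|^{\bn}\,dx=1$ and minimality, $\|\nabla\un\|_{L^{p_n}}^{p_n}+[\un]_{p_n,r}^{p_n}+\|\nabla\vn\|_{L^{p_n}}^{p_n}+[\vn]_{p_n,s}^{p_n}=2\lambda_n$, so every summand is $\le2\lambda_n$. The bound $\limsup_n(\lambda_n)^{1/p_n}\le\lambda_\infty<\infty$ (obtained independently in Step 3(a) by testing with $(a_0d,b_0d)$, hence no circularity) together with Hölder's inequality gives $\|\nabla\un\|_{L^{m}}\le|\Omega|^{1/m}(2\lambda_n)^{1/p_n}\le C|\Omega|^{1/m}$ for every fixed $m<\infty$ and $n$ large; thus $\{\un\},\{\vn\}$ are bounded in every $W^{1,m}_0(\Omega)$. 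By Morrey's embedding they are equibounded and equicontinuous on $\overline\Omega$, so along a subsequence $\un\to\ui$, $\vn\to\vi$ uniformly on $\overline\Omega$ and weakly in every $W^{1,m}_0(\Omega)$; hence $\ui,\vi\in W^{1,\infty}_0(\Omega)$, $\ui,\vi\ge0$, and passing to the limit in the normalization (using that $\||\un|^{\an/p_n}|\vn|^{\bn/p_n}\|_{L^{p_n}(\Omega)}\to\|\ui^{\Gamma}\vi^{1-\Gamma}\|_{L^\infty(\Omega)}$ when the integrand converges uniformly on a set of finite measure) gives $\|\ui^{\Gamma}\vi^{1-\Gamma}\|_{L^\infty(\Omega)}=1$.

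\emph{Step 3: identification of the limit.} (a) From $\lambda_n\le\mathcal{J}_{p_n}(a_0 d,b_0 d)$, raising to the power $1/p_n$ and using $\|\nabla(\cdot)\|_{L^{p_n}}\to\|\nabla(\cdot)\|_{L^\infty}$, $\bigl([\,\cdot\,]_{p_n,r}^{p_n}\bigr)^{1/p_n}\to[\,\overline{(\cdot)}\,]_{C^{0,r}(\mathbb{R}^N)}$ and $\bigl(\io|\cdot|^{\an}|\cdot|^{\bn}\bigr)^{1/p_n}\to\|(\cdot)^{\Gamma}(\cdot)^{1-\Gamma}\|_{L^\infty}$, we get $\limsup_n(\lambda_n)^{1/p_n}\le\mathcal{J}_\infty(a_0 d,b_0 d)=\lambda_\infty$. (b) Conversely, weak lower semicontinuity in $W^{1,m}_0$ applied to $\|\nabla\un\|_{L^m}\le|\Omega|^{1/m}(2\lambda_n)^{1/p_n}$ followed by $m\to\infty$ gives $\|\nabla\ui\|_{L^\infty}\le\liminf_n(\lambda_n)^{1/p_n}$, likewise for $\vi$. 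For the Hölder seminorm, fix $x_0\ne y_0$ in $\Omega$, restrict the integral defining $[\un]_{p_n,r}^{p_n}$ to $B_\delta(x_0)\times B_\delta(y_0)$, use a common modulus of continuity $\omega$ of $\{\un\}$ to bound $|\un(x)-\un(y)|\ge|\un(x_0)-\un(y_0)|-2\omega(\delta)$, let $n\to\infty$ and then $\delta\to0$ to obtain $|\ui(x_0)-\ui(y_0)|/|x_0-y_0|^{r}\le\liminf_n(\lambda_n)^{1/p_n}$; taking the supremum over $x_0,y_0$ gives $|\ui|_r\le\liminf_n(\lambda_n)^{1/p_n}$, and similarly $|\vi|_s\le\liminf_n(\lambda_n)^{1/p_n}$. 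Hence $\mathcal{J}_\infty(\ui,\vi)\le\liminf_n(\lambda_n)^{1/p_n}$, while Step 1 gives $\mathcal{J}_\infty(\ui,\vi)\ge\inf\mathcal{J}_\infty=\lambda_\infty$. Combining (a) and (b): $(\lambda_n)^{1/p_n}\to\lambda_\infty$, whence $\mathcal{J}_\infty(\ui,\vi)=\lambda_\infty=\min\mathcal{J}_\infty$, so $(\ui,\vi)$ minimizes $\mathcal{J}_\infty$; with the uniform convergence of Step 2 this proves all the claims.

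\emph{Main obstacle.} The delicate point is Step 3(b), i.e.\ passing to the limit in the \emph{nonlocal} term. Since the exponent of $|x-y|$ in $[\,\cdot\,]_{p_n,r}^{p_n}$ varies with $n$ and the kernel is singular and not compactly supported, the lower bound for $|\ui|_r$ must be extracted by a careful localization about a single pair of points, leaning on the \emph{uniform in $n$} Hölder equicontinuity of $\{\un\}$ furnished by the $W^{1,m}_0$ bounds (these start from finite energy thanks to Lemma \ref{nofrac}). The companion fact $\bigl([w]_{p,r}^{p}\bigr)^{1/p}\to[\bar w]_{C^{0,r}(\mathbb{R}^N)}$ for $w$ with Lipschitz zero-extension likewise requires splitting the double integral near the diagonal and near infinity in order to tame the $p$-dependent measure $|x-y|^{-N-rp}\,dx\,dy$ — precisely where the absence of a compactly supported kernel makes the argument genuinely different from the one in \cite{DelPFR}.
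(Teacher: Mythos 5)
Your proposal is correct, and its skeleton (distance-type test functions for the upper bound, uniform $W^{1,m}$ bounds plus Morrey embedding for compactness, and a pointwise estimate at a maximum point of $w^{\Gamma}z^{1-\Gamma}$ for the geometric lower bound) matches the paper's. Two implementation choices are genuinely different, though. First, you compute $\min \mathcal{J}_{\infty}=\lambda_{\infty}$ once and for all in your Step 1, by optimizing over the pairs $(a\,d,b\,d)$ built from the global distance function and by running the maximum-point estimate for an \emph{arbitrary} admissible pair; this delivers the geometric characterization and the minimality of $(\ui,\vi)$ in one stroke. The paper instead takes the specially scaled cones $w_{R}=R^{(1-\Gamma)(r-s)}d_{R}$, $z_{R}=R^{-\Gamma(r-s)}d_{R}$ supported in the inscribed ball to get the upper bound \eqref{firstl}, proves the geometric lower bound only at the limit pair through the infimum problem \eqref{28-6}, and obtains minimality separately by inserting each competitor into the $p_n$-level Rayleigh quotient and passing to the limit via \eqref{cipciop}; your route is logically tighter, while the paper's avoids ever naming the full limit $\bigl([\bar w]_{p_n,r}^{p_n}\bigr)^{1/p_n}\to[\bar w]_{C^{0,r}(\mathbb{R}^N)}$ (although both texts need, and neither proves, the $\limsup$ half of it for the Lipschitz test pair). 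Second, for the $\liminf$ inequality you extract $|\ui|_{r}\le\liminf_n(\lambda_n)^{1/p_n}$ by localizing the Gagliardo double integral on $B_{\delta}(x_0)\times B_{\delta}(y_0)$ and invoking equicontinuity, whereas the paper channels the same information through the auxiliary seminorms $|\cdot|_{r,q}$ of \eqref{auxilia} at fixed $q$ (estimates \eqref{cip}--\eqref{cipciop}) and only then sends $q\to\infty$; both are sound, yours makes explicit where the singular, non-compactly-supported kernel intervenes, the paper's stays at the level of integral norms and Fatou-type semicontinuity. Minor shared loose ends (the limiting argument needed because the nearest boundary point is not in $\Omega$ when testing the H\"older quotient, and the subsequence-of-subsequences step upgrading convergence of $(\lambda_n)^{1/p_n}$ to the full sequence) are treated with the same level of informality in the paper, so they are not gaps relative to it.
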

It is interesting to notice that the geometric characterization of $\lambda_{\infty}$ strongly depends on a simple geometric property of the domain: the radius of the largest ball contained in $\Omega$ (cf. \cite[Theorem 1.2]{DelPFR}). Indeed, if $R<1$, the limiting eigenvalue is neither affected by the presence of the nonlocal diffusion nor by the behaviour of the sequences $\an,\bn$ (and one recover the same result of \cite[Theorem 1.1]{BRS}). On the other hand, if $R>1$, the presence of the two fractional $p-$Laplacians and the limiting ratio $\Gamma \in (0, 1)$ come into the play trough the convex combination $\Gamma r+(1-\Gamma)s$ (cf. \cite[Theorem 1.2]{DelPR19}). The proof of such a dichotomy phenomenon strongly rely on a very specific choice of test functions in the limiting functional \eqref{inffunc}, see p. \pageref{existenceandgeometric} below for details.

In our last result, we show that $(u_{\infty}, v_{\infty})$ is not just the minimizer of the limiting functional \eqref{inffunc}, but it also solves in the viscosity sense a \emph{limiting PDE system}, obtained, in some sense, by passing to the limit as $p\to\infty$ in \eqref{Eq1}.

Before stating this result, we recall the definition of the nowadays well-known \textit{$\infty-$Laplacian}
$$
    \Delta_\infty w(x) \defeq \sum_{i, j=1}^{n} \frac{\partial w}{\partial x_j}(x)\frac{\partial^2 w}{\partial x_j \partial x_i}(x) \frac{\partial w}{\partial x_i}(x) = \left\langle D^2 w(x)\nabla u(x), \nabla u(x)\right\rangle,
$$
and the definition of the \textit{H\"{o}lder $\infty-$Laplacian}
\begin{align}\label{l.infty}
    \L_{\infty, t} w(x) \defeq  \L_{\infty, t}^+ w(x) + \L_{\infty, t}^- w(x),
\end{align}
where
\begin{equation}\label{l.infty.pm}
   \L_{\infty, t}^{+} w(x) \defeq \sup_{y \in \mathbb{R}^N}  \frac{w(x)-w(y)}{|x-y|^{t}} \quad \text{and} \quad \L_{ \infty, t}^{-} w(x) \defeq \inf_{y \in \mathbb{R}^N}  \frac{w(x)-w(y)}{|x-y|^{t}}.
\end{equation}

\begin{thm}[{\bf Limiting PDE system}]\label{MThm4} Suppose that assumptions of Theorem \ref{MThm2} are in force. Then, $(u_{\infty}, v_{\infty})$ is a viscosity solution to
\begin{equation}\label{EqLim}
  \left\{
\begin{array}{rcrcl}
   \max\{\mathrm{G}^r_1[u, v], \, \mathrm{G}^r_2[u, v]\} & = & 0 & \text{in} & \Omega\\
    \max\{\mathrm{G}^s_1[v, u], \, \mathrm{G}^s_2[v, u]\} & = & 0 & \text{in} & \Omega\\
   u & = & 0 & \text{on} & \mathbb{R}^N \setminus \Omega\\
   v & = & 0 & \text{on} & \mathbb{R}^N \setminus \Omega,
\end{array}
\right.
\end{equation}
where, for $t\in(0,1)$,

$$
   \mathrm{G}^t_1[u, v] \defeq \min\left\{\mathcal{L}_{\infty, t} u,\,  \L_{\infty, t}^{+} u - \lambda_{\infty}u^{\Gamma}v^{1-\Gamma},\, \mathcal{L}_{\infty, t}^{+} u -|\nabla u|\right\}
$$
and
$$
   \mathrm{G}^t_2[u, v] \defeq \min\left\{-\Delta_{\infty} u, \,|\nabla u| - \lambda_{\infty}u^{\Gamma}v^{1-\Gamma}, \,|\nabla u| + \mathcal{L}_{\infty, t}^{-} u, \,|\nabla u| -\mathcal{L}_{\infty, t}^{+} u\right\}.
$$
\end{thm}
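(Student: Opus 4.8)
The plan is to pass to the limit as $p_n\to\infty$ in the weak (equivalently, the viscosity) formulation of \eqref{Eq1}, exploiting the uniform convergence $(u_{p_n},v_{p_n})\to(u_\infty,v_\infty)$ and the pointwise convergence $(\lambda_n)^{1/p_n}\to\lambda_\infty$ already established in Theorem \ref{MThm2}. The first step is to record that each eigenpair $(u_{p_n},v_{p_n})$, being a weak solution of a $p$-structured local/nonlocal equation with a bounded right-hand side, is in fact a viscosity solution of the corresponding scalar equation; this is a now-standard equivalence (cf. \cite{DelPFR}, \cite{LL14}, \cite{BRS}) and I would state it as a preliminary lemma, taking care that the presence of the singular, non-compactly-supported fractional term requires the comparison-with-smooth-test-functions argument to be carried out with the principal-value integral split into a near part (where the $C^2$ test function controls the integrand) and a far part (where $L^\infty$ bounds suffice). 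The second step is the heart of the matter: fix a point $x_0\in\Omega$ and a smooth test function $\varphi$ touching $u_\infty$ from below at $x_0$ (the supersolution case; the subsolution case is symmetric), and analyze the rescaled equation
\begin{equation}\label{rescaled}
  \Big(-\Delta_{p_n}\varphi + (-\Delta)^r_{p_n}\varphi\Big)^{1/(p_n-1)} = \Big(\tfrac{2\an}{\an+\bn}\lambda_n\Big)^{1/(p_n-1)}\big(u_{p_n}^{\an-1}v_{p_n}^{\bn}\big)^{1/(p_n-1)}
\end{equation}
at the touching point, keeping track of the three competing scales: $|\nabla\varphi|^{p_n-2}$ times lower-order $\Delta_\infty$-type terms from the local operator, the $\L_{\infty,r}^\pm$-type quantities raised to the power $p_n-1$ from the nonlocal operator, and $\lambda_\infty\,u_\infty^{\Gamma}v_\infty^{1-\Gamma}$ from the right-hand side (using $\an/p_n\to\Gamma$, $\bn/p_n\to1-\Gamma$, and $(\lambda_n)^{1/p_n}\to\lambda_\infty$).

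The key algebraic mechanism is the elementary fact that if $a_n^{p_n}+b_n^{p_n}\to L$ with $a_n,b_n\ge0$ bounded, then $\max\{\limsup a_n,\limsup b_n\}$ relates to $L^{1/p_n}$, so that the sum of the two diffusion terms in \eqref{rescaled}, after taking $(p_n-1)$-th roots, forces a $\max$ between a purely local contribution and a purely nonlocal contribution in the limit; simultaneously, within each contribution, the competition between the leading-order term (e.g.\ $|\nabla\varphi|$ or $\L_{\infty,r}^+\varphi$) and the sign of the sub-leading correction produces a $\min$. Carefully bookkeeping which terms survive in the supersolution versus the subsolution inequality yields exactly the four branches inside $\mathrm{G}^t_1$ and the four inside $\mathrm{G}^t_2$, and the outer $\max\{\mathrm{G}^r_1,\mathrm{G}^r_2\}$ records the local-versus-nonlocal dichotomy for the first component (and likewise $\max\{\mathrm{G}^s_1,\mathrm{G}^s_2\}$ with $r$ replaced by $s$ for the second). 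This is essentially the argument of \cite[Section 8]{LL14} and \cite{DelPFR} for the scalar case, adapted component-by-component; the coupling in the right-hand side is harmless at the limit because it only contributes the single term $\lambda_\infty u_\infty^\Gamma v_\infty^{1-\Gamma}$.

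The main obstacle I anticipate is the rigorous passage to the limit in the nonlocal term of \eqref{rescaled} at a touching point: unlike the compactly-supported-kernel case of \cite{DelPFR}, here the kernel $|x-y|^{-N-rp_n}$ is singular and the integration extends over all of $\mathbb{R}^N$, so one must show that (i) the tail of the principal-value integral does not blow up under the $p_n$-scaling — this uses that $u_{p_n}$ is uniformly bounded and that $rp_n\to\infty$ makes the far kernel integrable with a harmless constant — and (ii) the $(p_n-1)$-th root of the near part converges to the correct H\"older-quotient quantity $\L_{\infty,r}^\pm$, which requires a delicate Laplace-type/concentration argument showing that the integral is asymptotically dominated by the direction(s) realizing the supremum/infimum of $\frac{\varphi(x_0)-\varphi(y)}{|x_0-y|^{r}}$. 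A secondary technical point is ensuring the convergence $\big(\lambda_n\big)^{1/p_n}\to\lambda_\infty$ can be combined with $\big(u_{p_n}^{\an-1}v_{p_n}^{\bn}\big)^{1/(p_n-1)}\to u_\infty^{\Gamma}v_\infty^{1-\Gamma}$ uniformly on compact subsets where $u_\infty,v_\infty>0$; near the zero set of $u_\infty v_\infty$ one argues, as is customary, that the relevant inequality degenerates to the ``$\min$ with a nonnegative term'' form and remains valid. Once these limiting identifications are in place, assembling the eight inner terms into \eqref{EqLim} is routine.
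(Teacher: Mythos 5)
Your plan follows essentially the same route as the paper: pass to the limit in the viscosity formulation at perturbed touching points, using the weak-to-viscosity equivalence (the paper's Lemma \ref{EquivSols}), the uniform convergence of $(u_{p_n},v_{p_n})$ and $(\lambda_n)^{1/p_n}\to\lambda_\infty$, and precisely the key convergence of the rescaled nonlocal quotients to $\L^{\pm}_{\infty,r}$ that you flag as the main obstacle (the paper's Lemma \ref{Lemlim-op}, proved via the $L^{p_n}$-norm limit of Lemma \ref{1501}), with the $\max$/$\min$ structure emerging from the same root-taking and case analysis you describe. Apart from minor slips (e.g.\ $\mathrm{G}^t_1$ has three branches, not four, and the behaviour at the zero set of $u_\infty v_\infty$ needs no special argument since the exponents $(\an-1)/(p_n-1)\to\Gamma>0$), the proposal is a correct outline of the paper's own argument.
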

It is important to point out that besides its own interest, Theorems \ref{MThm2} and \ref{MThm4} provide an alternative mechanism to establish existence of viscosity solutions to \eqref{EqLim}, which is
a non-trivial endeavor, since it is not crystal clear whether the involved operators fulfill a comparison principle or not. For this very reason, existence and uniqueness assertions cannot be established via classical Perron's method.

In order to establish our results, we have to overcome some technical obstacles and adopt certain alternative approaches, which, for the best of our knowledge, have not been put into practice for this kind of problems before (cf. \cite{BRS}, \cite{DelPR19}, \cite{FIN99}, \cite{JLM99} and \cite{LL14}), see Sections \ref{Section2}, \ref{Sec3}, \ref{Sec5} and Appendix \ref{Appendix} for more details.

Finally, let us mention briefly some possible applications of our results, at least in the particular case where formally $\alpha=p,\beta=0$,  i.e., the single equation. For instance Theorem \ref{MThm1} could be employed as a first step to deal with shape optimization problems, like nonlocal Faber-Krahn or Hong-Krahn-Szego type inequalities, see \cite[Theorems 1.3 and 1.4]{GS19}. Moreover, Theorem \ref{MThm1} allows the investigation of existence or non-existence of positive solutions for certain one parameter problems, like
\begin{equation}\label{28-11}
\left\{
\begin{array}{rclcl}
-\Delta_p u + (-\Delta)^s_p u & = &  \lam |u|^{p-2}u + f_{\lambda}(x, u) & \mbox{in} & \Omega \\
  u& =& 0&\text{ on } & \R^n \setminus \Omega,
\end{array}
\right.
\end{equation}
under suitable assumptions on the behaviour of $f_{\lambda}(x, \cdot)$ near the origin and at infinity. Actually, problem \eqref{28-11} is expected to have a continuous branch of solutions $u_{\lambda}$  that bifurcates to infinity as $\lambda$ approaches the associated first eigenvalue. Of particular interest, when $f_{\lambda}$ is a concave or convex power, i.e. $u^{q(p)-1}$, for either $1<q(p)<p$ or $p<q(p)$, we can obtain existence results for both the original and the limit problems as in \cite{CharPer07}.

 Lastly, for the sake of completeness, we also mention that recently, in order to investigate the concave-convex problem, for exponents $q$ and $r$ satisfying $0<q(p)<p-1<r(p)<\infty$:
$$
\left\{
\begin{array}{rclcl}
-\Delta_p u(x) + (-\Delta)^s_p u(x) & = &  \lam_p u^q(x) + u^r(x) & \mbox{in} & \Omega \\
u(x)&>&0&\mbox{in}& \Omega\\
  u(x)& =& 0&\text{ on } & \R^n \setminus \Omega,
\end{array}
\right.
$$
the scalar version of \eqref{Eq1} (Theorem \ref{MThm1}) has played a decisive role in obtaining existence of weak solutions, see \cite[Appendix A]{daSS19-2} for further details.\\

\section{Notations, functional setting and background results}\label{Section2}

In this section we collect all the notations, preliminary results and definitions that we need throughout the article.

\subsection{Functional setting}\label{Appenv}
Let us recall the standard definition of the fractional Sobolev space of exponents $t\in(0,1)$ and $p\in(1,\infty)$ in all $\mathbb{R}^N$ (see \cite{DiNPV} or \cite{leoni2}):
\[
    W^{t,p}(\mathbb{R}^N) \defeq \left\{f \in L^p(\mathbb{R}^N): [f]_{t,p}=\int_{\mathbb{R}^{2N}}\frac{|f(y)-f(x)|^p}{|y-x|^{N+tp}}dxdy<\infty \right\}.
\]
Usually, in order to deal with the $t$-fractional $p-$Laplacian in the bounded domain $\Omega$, one has to consider solutions  for which the zero extension to $\Omega^C$ belongs to $W^{t,p}(\mathbb{R}^N)$. This is due to the nonlocal nature of the operator, which force to consider the contribution of the solutions inside and outside the domain $\Omega$. However, in our case, the simultaneous presence of the local operator ($\Delta_p$) allows us to work in a more simple environment space. Indeed we have the following Lemma.
\begin{lemma}\label{nofrac} For any $p\in(1,\infty)$ and $s\in(0,1)$, there exist a constant $C=C(N,s,\Omega)$ such that $C_p = C_p(N,s, p, \Omega) \to C\in(0,\infty)$ as $p\to\infty$ and
$$
   [\bar w]_{s,p}\le C_p \|\nabla w\|_{\elle p} \ \ \ \forall \ w\in W^{1,p}_0(\Omega),
$$
where $\bar w$ is the extension to $0$ of $w$ in all $\mathbb{R}^N$.
\end{lemma}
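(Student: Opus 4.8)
The plan is to prove the Gagliardo seminorm estimate $[\bar w]_{s,p}\le C_p\|\nabla w\|_{L^p(\Omega)}$ by splitting the double integral over $\mathbb{R}^{2N}$ according to whether the points $x,y$ are close together or far apart. Fix $w\in W^{1,p}_0(\Omega)$ and let $\bar w$ be its zero extension; since $w$ is smooth functions in $C^\infty_c(\Omega)$ are dense, I would first prove the bound for $w\in C^\infty_c(\Omega)$ and then pass to the limit. Choose a radius, say $1$ (or the diameter of $\Omega$), and write
\[
  [\bar w]_{s,p}^p=\iint_{|x-y|\le 1}\frac{|\bar w(x)-\bar w(y)|^p}{|x-y|^{N+sp}}\,dx\,dy+\iint_{|x-y|> 1}\frac{|\bar w(x)-\bar w(y)|^p}{|x-y|^{N+sp}}\,dx\,dy=:I_1+I_2.
\]
For $I_2$, the singular kernel is harmless: $|x-y|^{-N-sp}\le |x-y|^{-N}$ is integrable at infinity, and using $|\bar w(x)-\bar w(y)|^p\le 2^{p-1}(|\bar w(x)|^p+|\bar w(y)|^p)$ together with Fubini and $\int_{|z|>1}|z|^{-N-sp}\,dz=\frac{\omega_{N-1}}{sp}$, one gets $I_2\le \frac{2^{p}\omega_{N-1}}{sp}\|w\|_{L^p(\Omega)}^p$, and then the Poincaré inequality in $W^{1,p}_0(\Omega)$ converts $\|w\|_{L^p(\Omega)}^p$ into $C_{\mathrm{Poin}}^p\|\nabla w\|_{L^p(\Omega)}^p$.

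For $I_1$ I would use the standard device of writing, for $x,y$ with $|x-y|\le 1$, the difference along the segment: $\bar w(x)-\bar w(y)=\int_0^1 \nabla \bar w(y+t(x-y))\cdot(x-y)\,dt$ — valid because $\bar w$ is $W^{1,p}$ on all of $\mathbb{R}^N$ when $w\in W^{1,p}_0(\Omega)$, the zero extension being in $W^{1,p}(\mathbb{R}^N)$. Then by Jensen/Hölder in $t$,
\[
  |\bar w(x)-\bar w(y)|^p\le |x-y|^p\int_0^1|\nabla\bar w(y+t(x-y))|^p\,dt,
\]
so that, changing variables $z=x-y$,
\[
  I_1\le \int_{|z|\le 1}|z|^{p-N-sp}\Big(\int_0^1\int_{\mathbb{R}^N}|\nabla\bar w(y+tz)|^p\,dy\,dt\Big)\,dz
  =\Big(\int_{|z|\le 1}|z|^{(1-s)p-N}\,dz\Big)\|\nabla w\|_{L^p(\Omega)}^p,
\]
where the inner integral equals $\|\nabla\bar w\|_{L^p(\mathbb{R}^N)}^p=\|\nabla w\|_{L^p(\Omega)}^p$ by translation invariance, and the $z$-integral is finite precisely because $(1-s)p-N>-N$, i.e. it equals $\frac{\omega_{N-1}}{(1-s)p}$. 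Combining, $[\bar w]_{s,p}^p\le\big(\frac{\omega_{N-1}}{(1-s)p}+\frac{2^p\omega_{N-1}C_{\mathrm{Poin}}^p}{sp}\big)\|\nabla w\|_{L^p(\Omega)}^p$, which gives the estimate with $C_p^p$ equal to the bracket.

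The remaining point — which is really the only delicate one — is the asymptotic claim that $C_p\to C\in(0,\infty)$ as $p\to\infty$. Taking $p$-th roots, $C_p=\big(\frac{\omega_{N-1}}{(1-s)p}\big)^{1/p}\big(1+\text{l.o.t.}\big)^{1/p}\cdot(\dots)$; the factors $\big(\frac{\omega_{N-1}}{sp}\big)^{1/p}$ and $\big(\frac{\omega_{N-1}}{(1-s)p}\big)^{1/p}$ tend to $1$, the factor $2$ coming from $(2^p)^{1/p}=2$ survives, and the genuinely nontrivial input is that the Poincaré constant $C_{\mathrm{Poin}}=C_{\mathrm{Poin}}(p,\Omega)$ in $\|w\|_{L^p(\Omega)}\le C_{\mathrm{Poin}}\|\nabla w\|_{L^p(\Omega)}$ satisfies $C_{\mathrm{Poin}}(p,\Omega)\to \mathrm{diam}(\Omega)$ (or, more precisely, to a finite geometric constant — in fact, in the sharp form, to $\|d_\Omega\|_{L^\infty}$ where $d_\Omega(x)=\mathrm{dist}(x,\partial\Omega)$; this is exactly the Bhattacharya–DiBenedetto–Manfredi type limit already recalled around \eqref{lam.inf}). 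With that, $C_p\to C:=2\,\mathrm{diam}(\Omega)$ (or the appropriate finite constant), which is both positive and finite, and then restores the estimate to all of $W^{1,p}_0(\Omega)$ by density: if $w_k\to w$ in $W^{1,p}_0(\Omega)$ then $\nabla w_k\to\nabla w$ in $L^p$, a subsequence of $\bar w_k$ converges a.e., and Fatou applied to $[\bar w_k]_{s,p}^p$ passes the inequality to the limit. The main obstacle is thus not the inequality itself — which is a routine near/far split — but tracking the $p$-dependence of all constants carefully enough to conclude the stated convergence $C_p\to C\in(0,\infty)$, for which one invokes the known limiting behaviour of the $L^p$ Poincaré constant.
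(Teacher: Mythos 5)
Your proof is correct and follows essentially the same strategy as the paper's (an adaptation of \cite[Proposition 2.2]{DiNPV}): a near/far splitting of the double integral, the fundamental theorem of calculus along segments for the near part, kernel decay at infinity plus the Poincar\'e inequality for the far part, and the observation that all constants are of the form $(c/p)^{1/p}\to 1$ times a Poincar\'e constant that stays bounded (in fact converges) as $p\to\infty$. The only cosmetic difference is that the paper first splits $\mathbb{R}^{2N}$ into $\Omega\times\Omega$ and $\Omega\times\Omega^c$, where $\bar w\equiv 0$ on $\Omega^c$, which avoids your $2^{p-1}$ convexity inequality and produces a different, but equally admissible, value of the limit constant $C$.
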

\begin{proof}
The proof follows by adapting the argument of \cite[Proposition 2.2]{DiNPV}. We provide some details for the convenience of the reader. For any $w\in W^{1,p}_0(\Omega)$ let $\bar w\in W^{1,p}(\mathbb{R}^N)$ be the extension to $0$ of $w$ in all $\mathbb{R}^N$. We recall that $\|\nabla \bar w\|_{L^{p}(\mathbb{R}^N)}=\|\nabla w\|_{\elle p}$. We have that
\[
   \int_{\mathbb{R}^{2\mathbb{N}}}\frac{|\bar w(x)-\bar w(y)|^p}{|x-y|^{N+ps}}dxdy=2\underbrace{\int_{\Omega\times\Omega^c}\frac{|\bar w(x)-\bar w(y)|^p}{|x-y|^{N+ps}}dxdy}_{I_1}+\underbrace{\int_{\Omega\times\Omega}\frac{|w(x)-w(y)|^p}{|x-y|^{N+ps}}dxdy}_{I_2}.
\]
We provide the explicit computation only for $I_1$, being the treatment of $I_2$ similar. For this purpose, let $B_{\rho}(x)$ be the ball centered at $x\in\Omega$ with $\rho$ the diameter of $\Omega$. We have that
$$
\begin{array}{rcl}
  \displaystyle \int_{\Omega}\int_{\Omega^c\cap B_{\rho}(x)}\frac{|\bar w(x)-\bar w(y)|^p}{|x-y|^{N+ps}}dxdy & \le & \displaystyle \int_{\Omega}\int_{B_{\rho}(x)}\frac{|\bar w(x)-\bar w(y)|^p}{|x-y|^{N+ps}}dxdy\\
   & \le & \displaystyle \io\int_{B_{0,\rho}}\frac{|\bar w(x)-\bar w(x+z)|^p}{|z|^{N+ps}}dxdz \\
   & \le & \displaystyle \io\int_{B_{0,\rho}}\int_0^1\frac{|\nabla\bar w(x+tz)|^p}{|z|^{N+ps-p}}dtdxdz\\
   & \le & \displaystyle   \omega_N \|\nabla \bar w\|^p_{L^p(\mathbb{R}^N)}\int_0^{\rho}r^{p(1-s)-1}\\
   & \le & \displaystyle \frac{\omega_N}{p(1-s)}\rho^{p(1-s)}\|\nabla w\|^p_{\elle p}.
\end{array}
$$
On the other hand, we have that
$$
\begin{array}{rcl}
\displaystyle \int_{\Omega}\int_{\Omega^c\cap B^c_{\rho}(x)}\frac{|\bar w(x)-\bar w(y)|^p}{|x-y|^{N+ps}}dxdy & \le & \displaystyle \int_{\Omega}\int_{B_{\rho}(x)^c}\frac{|\bar w(x)-\bar w(y)|^p}{|x-y|^{N+ps}}dxdy\\
&\le& \displaystyle \frac{\omega_N}{ps}\rho^{-ps}\|w\|_{\elle p}^p\\
&\le& \displaystyle \frac{\omega_N \mbox{diam}(\Omega)^p}{p^2s}\rho^{-ps}\|\nabla w\|_{\elle p}^p,
\end{array}
$$
where in the last line we have used the Poincar\'{e} inequality, see e.g. \cite[Theorem 13.19]{leoni2} .
\end{proof}

\subsection{Weak and viscosity solutions}\label{DefWeaViscSol}

In this work we will deal with different notions of solutions, and for the sake of clarity, in this paragraph we specify their definitions. Indeed, while for fixed values of $1<p< \infty$, we are going to consider weak and viscosity solutions, in the limiting setting, as $p \to \infty$, we will use the notion of viscosity solutions, only.

Before that, let us introduce the following useful notation: for any $w,\psi\in W^{1, p}_0(\Omega)$ we denote
\[
   \E_p(w,\psi)=\int_\Omega |\nabla w|^{p-2}\nabla w\cdot \nabla \psi\,dx
\]
and
\[
   \E_{t,p}(w, \psi)=\int_{\mathbb{R}^{2N}} \frac{|w(x)-w(y)|^{p-2}(w(x)-w(y))(\psi(x)-\psi(y))}{|x-y|^{n+tp}}\,dxdy.
\]
We are now ready to specify the definitions of solutions employed throughout the present article.
\begin{definition}[{\bf Weak solution}]\label{nyc}
A couple $(u, v) \in W^{1, p}_0(\Omega) \times W^{1, p}_0(\Omega)$ is a weak solution to \eqref{Eq1} if for all $\psi, \varphi \in W^{1, p}_0(\Omega)$
it holds
$$
\left\{
\begin{array}{rcl}
  \E_p(u,\psi) + \E_{r,p}(u,\psi) & = & \displaystyle \lam\frac{2\alpha}{\alpha+\beta}\int_\Omega |u|^{\alpha-2}u |v|^{\beta} \psi \,dx  \\
  \E_p(v,\varphi) + \E_{s,p}(v,\varphi)& = & \displaystyle \lam \frac{2\beta}{\alpha+\beta}\int_\Omega |u|^{\alpha} |v|^{\beta-2}v \varphi \,dx . \\
\end{array}
\right.
$$
\end{definition}
In our approach, it is more convenient to use for fixed values of $p$  a notion of decoupled viscosity solution for \eqref{Eq1}, see \cite{BRS} and \cite{DelPR19} for the corresponding definitions in the local and nonlocal cases. Indeed, we consider the couple $(u,v)$ as a viscosity solution for each equation of system \eqref{Eq1}, separately as follows:

\begin{definition}[{\bf Decoupled viscosity solution}]\label{DefViscSol}
 A couple $(u, v) \in C(\Omega) \times C(\Omega)$ is said to be a viscosity subsolution (resp. supersolution) to the first equation of \eqref{Eq1} if, whenever $x_0 \in \Omega$ and $\phi\in C^2(\Omega)\cap C^1_0(\Omega) $, with $\phi(x_0)=u(x_0)$, are such that $u-\phi$ has a strict local maximum (resp. minimum) at $x_0$, then
\begin{equation*}
    -\Delta_p \phi(x_0) +(-\Delta)^r_p \phi(x_0)  \leq  \lam |\phi(x_0)|^{\alpha-2}\phi(x_0)|v(x_0)|^{\beta} \quad (\text{resp.} \,\,\,\geq ....).
\end{equation*}
Respectively, the couple $(u, v)$ is said to be a viscosity subsolution (resp. supersolution) to the second equation of \eqref{Eq1} if, whenever $y_0 \in \Omega$ and $\psi\in C^2(\Omega)\cap C^1_0(\Omega) $, with $\psi(y_0)=v(y_0)$, are such that $v-\psi$ has a strict local maximum (resp. minimum) at $y_0$, then
\begin{equation*}
    -\Delta_p \psi(y_0) +(-\Delta)^s_p \psi(y_0) \leq  \lam |u(y_0)|^{\alpha}|\psi(y_0)|^{\beta-2}\psi(y_0) \quad (\text{resp.} \,\,\,\geq ....)
\end{equation*}

Finally, $(u, v) \in C(\Omega) \times C(\Omega)$ is said to be a viscosity solution to the first equation (resp. the second) of \eqref{Eq1} if it is simultaneously a viscosity subsolution and a viscosity supersolution to the corresponding equation.
\end{definition}
For the limit case, according to the notation introduced in \eqref{EqLim}, some minor modifications in the definition of viscosity solutions have to be considered.
\begin{definition}[{\bf Viscosity solution for the limit equation}]\label{DefVSlimeq} A couple $(u, v) \in C(\Omega) \times C(\Omega)$ is said to be a viscosity subsolution (resp. supersolution) to \eqref{EqLim} if, whenever $x_0 \in \Omega$ and $\phi, \psi \in C^2(\Omega)\cap C^1_0(\Omega)$, with $\phi(x_0)=u(x_0)$ and $\psi(x_0)=v(x_0)$, are such that $u-\phi$ and $v-\psi$ have a strict local maximum (resp. minimum) at $x_0$, then
\begin{equation}\label{16-10bis}
\left\{
\begin{array}{rcl}
   \max\{\mathrm{G}^r_1[\phi(x_0), \psi(x_0)],\mathrm{G}^r_2[\phi(x_0), \psi(x_0)]\} &\le& 0 \quad (\text{resp.} \,\,\,\geq ....)\\
   \max\{\mathrm{G}^s_1[\psi(x_0), \phi(x_0)],\mathrm{G}^s_2[\psi(x_0), \phi(x_0)]\} &\le& 0 \quad (\text{resp.} \,\,\,\geq ....)
\end{array}
\right.
\end{equation}

Accordingly, $(u, v) \in C(\Omega) \times C(\Omega)$ is said to be a viscosity solution to \eqref{EqLim} if it is
simultaneously a viscosity subsolution and a viscosity supersolution.

\end{definition}
For the sake of clarity, let us stress that the {\bf first inequality} in \eqref{16-10bis} means that (see Theorem \ref{MThm4})
\[
\min\left\{\mathcal{L}_{\infty, r} \phi(x_0),\,  \L_{\infty, r}^{+} \phi(x_0) - \lambda_{\infty}\phi(x_0)^{\Gamma}\psi(x_0)^{1-\Gamma},\, \mathcal{L}_{\infty, r}^{+} \phi(x_0) -|\nabla \phi(x_0)|\right\} \le 0
\]
\[\mbox{\textbf{and} (\textbf{or} in the case $\ge$ ) }\]
\[
\min\left\{-\Delta_{\infty} \phi(x_0), \,|\nabla \phi(x_0)| - \lambda_{\infty}\phi(x_0)^{\Gamma}\psi(x_0)^{1-\Gamma}, \,|\nabla \phi(x_0)| + \mathcal{L}_{\infty, r}^{-} \phi(x_0), \,|\nabla \phi(x_0)| -\mathcal{L}_{\infty, r}^{+} \phi(x_0)\right\}\le 0.
\]

The following lemma provides a relation between weak and viscosity sub/supersolutions to the decoupled equations of \eqref{Eq1}, see Definition \ref{DefViscSol}. We refer the reader to \cite{JLM99} and \cite{LL14} for similar results in the local and nonlocal settings.

\begin{lemma}[{\bf Weak solutions are viscosity solutions}]\label{EquivSols} If $(u,v) \in \left(W^{1,p}_0(\Omega)\cap C(\Omega)\right)^2$ is a weak supersolution (resp. subsolution) to \eqref{Eq1} then it is also a decoupled viscosity supersolution (resp. subsolution) to the first and the second equation of \eqref{Eq1}.
\end{lemma}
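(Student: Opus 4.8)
The plan is a contradiction argument. By symmetry it suffices to treat a weak supersolution and the first equation of \eqref{Eq1}; the subsolution case follows upon reversing every inequality, and the second equation by interchanging $u\leftrightarrow v$ and $r\leftrightarrow s$. Write $h(x):=\frac{2\alpha}{\alpha+\beta}\lambda\,|u(x)|^{\alpha-2}u(x)|v(x)|^{\beta}$ for the right-hand side of the first equation of \eqref{Eq1} evaluated along $(u,v)$. Suppose $(u,v)$ is a weak supersolution but that the viscosity supersolution condition of Definition \ref{DefViscSol} fails at some $x_0\in\Omega$: there are $r_0>0$ with $\overline{B_{2r_0}(x_0)}\subset\Omega$ and $\phi\in C^2(\Omega)\cap C^1_0(\Omega)$ with $\phi(x_0)=u(x_0)$, such that $u-\phi$ has a strict minimum over $\overline{B_{2r_0}(x_0)}$ at $x_0$ and
\[
  -\Delta_p\phi(x_0)+(-\Delta)^r_p\phi(x_0)\;<\;h(x_0)
\]
(note $h(x_0)=\frac{2\alpha}{\alpha+\beta}\lambda|\phi(x_0)|^{\alpha-2}\phi(x_0)|v(x_0)|^{\beta}$ because $\phi(x_0)=u(x_0)$). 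Two routine reductions come first. Since $\phi$ enters only through the scalar $-\Delta_p\phi(x_0)$, I would dispose of the degenerate case $\nabla\phi(x_0)=0$ exactly as in \cite{JLM99} and henceforth assume $\nabla\phi(x_0)\neq0$. And, as is customary for nonlocal operators (cf. \cite{LL14}), $(-\Delta)^r_p\phi(x_0)$ should be read as $(-\Delta)^r_p\Phi(x_0)$, where $\Phi:=\phi$ on $B_{r_0}(x_0)$ and $\Phi:=u$ on $\mathbb{R}^N\setminus B_{r_0}(x_0)$ (so $\Phi(x_0)=u(x_0)$).

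First I would propagate the strict inequality to a full neighbourhood of $x_0$. The map $x\mapsto-\Delta_p\phi(x)$ is continuous near $x_0$ (using $\phi\in C^2$ and $\nabla\phi(x_0)\neq0$); the map $x\mapsto(-\Delta)^r_p\Phi(x)$ is continuous near $x_0$ as well, since the second-order Taylor cancellation of the $C^2$ function $\phi$ makes the principal value near the diagonal absolutely and locally uniformly convergent, while the tail over $\mathbb{R}^N\setminus B_{r_0}(x_0)$ depends continuously on $x$ by dominated convergence (this continuity, routine but not entirely free, I would isolate as a separate lemma of the type placed in the appendix); finally $h$ is continuous since $u,v\in C(\Omega)$. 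Hence there are $0<\rho\le r_0$, with $\nabla\phi\neq0$ on $\overline{B_\rho(x_0)}$, and $\delta_0>0$ such that
\[
  -\Delta_p\phi(x)+(-\Delta)^r_p\Phi(x)\;\le\;h(x)-\delta_0\qquad\text{for all }x\in B_\rho(x_0).
\]

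The heart of the matter — and the step I expect to be the main obstacle — is to turn this pointwise information into a contradiction with the weak formulation, the difficulty lying in the nonlocal term. For a sufficiently small $\varepsilon>0$ set
\[
  D_\varepsilon:=\{x\in B_{r_0}(x_0):\phi(x)+\varepsilon>u(x)\},\qquad \eta:=(\phi+\varepsilon-u)^+\chi_{B_\rho(x_0)}.
\]
Since $u-\phi>0$ on $\overline{B_{2r_0}(x_0)}\setminus\{x_0\}$ and vanishes at $x_0$, for $\varepsilon$ small $D_\varepsilon$ is a nonempty open set with $\overline{D_\varepsilon}\Subset B_\rho(x_0)$, and $\eta\in\w1p$ is nonnegative, not identically zero, and supported in $\overline{D_\varepsilon}$. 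Introduce the competitor $\psi:=\phi+\varepsilon$ on $D_\varepsilon$, $\psi:=u$ on $\mathbb{R}^N\setminus D_\varepsilon$, so that $\psi\ge u$ and $\psi-u=\eta$ on all of $\mathbb{R}^N$. Testing the weak supersolution inequality with $\eta\ge0$ gives $\E_p(u,\eta)+\E_{r,p}(u,\eta)\ge\io h\,\eta\,dx$; since $\eta=\psi-u$, the monotonicity of $\xi\mapsto|\xi|^{p-2}\xi$ — in vector form for the local term, in scalar form inside the double integral for the nonlocal one — gives $\E_p(u,\eta)\le\E_p(\psi,\eta)$ and $\E_{r,p}(u,\eta)\le\E_{r,p}(\psi,\eta)$, whence $\E_p(\psi,\eta)+\E_{r,p}(\psi,\eta)\ge\io h\,\eta\,dx$. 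As $\nabla\psi=\nabla\phi\neq0$ on $D_\varepsilon$ and $\eta$ vanishes on $\partial D_\varepsilon$, an integration by parts yields $\E_p(\psi,\eta)=-\int_{D_\varepsilon}\Delta_p\phi\,\eta\,dx$, and since $\eta$ is supported in $D_\varepsilon$, where $(-\Delta)^r_p\psi$ is continuous, $\E_{r,p}(\psi,\eta)=\int_{D_\varepsilon}(-\Delta)^r_p\psi\,\eta\,dx$; thus $\int_{D_\varepsilon}\big[-\Delta_p\phi+(-\Delta)^r_p\psi\big]\eta\,dx\ge\int_{D_\varepsilon}h\,\eta\,dx$. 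It remains to contradict this via the pointwise bound $-\Delta_p\phi+(-\Delta)^r_p\psi<h$ on $D_\varepsilon$ for small $\varepsilon$: for $x\in D_\varepsilon$ one checks $\psi(x)-\psi(y)\le\phi(x)-\phi(y)$ for all $y\in B_{r_0}(x_0)$ and $\psi(x)-\psi(y)=\Phi(x)-\Phi(y)+\varepsilon$ for $y\notin B_{r_0}(x_0)$, so splitting the principal value integral at $\partial B_{r_0}(x_0)$, using monotonicity on the inner part and a routine estimate (uniform in $x\in D_\varepsilon$) on the outer one, one obtains $(-\Delta)^r_p\psi(x)\le(-\Delta)^r_p\Phi(x)+o(1)$ as $\varepsilon\to0^+$. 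Combined with the localized inequality of the previous step and with $\eta>0$ on $D_\varepsilon$, this forces $\int_{D_\varepsilon}[-\Delta_p\phi+(-\Delta)^r_p\psi]\eta\,dx<\int_{D_\varepsilon}h\,\eta\,dx$ for $\varepsilon$ small, a contradiction.

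As anticipated, the main obstacle throughout is the fractional term. On one hand one has to secure the continuity of $x\mapsto(-\Delta)^r_p\Phi(x)$ for $C^2$ test functions (routine, through the diagonal cancellation, but it must be written down). On the other — and this is the genuinely new point compared with the purely local cases \cite{JLM99} and \cite{BRS} — since a test function touches $u$ only locally whereas $(-\Delta)^r_p$ weighs the solution over all of $\mathbb{R}^N$, one is forced to compare the operator on the glued function $\psi$ with the operator on $\Phi$, and the additive $\varepsilon$ surfacing in this comparison on $\mathbb{R}^N\setminus B_{r_0}(x_0)$ must be controlled uniformly over the shrinking family $\{D_\varepsilon\}$; it is precisely this interplay between local touching and nonlocal diffusion that the simultaneous presence of $\Delta_p$ and $(-\Delta)^r_p$ adds to an otherwise classical argument.
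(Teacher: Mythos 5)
Your overall scheme is the same as the paper's: argue by contradiction, use continuity to spread the strict pointwise inequality to a small ball, lift the test function by a small positive constant so that it crosses $u$ only in a compactly contained contact region, test the weak formulation with the positive part of the difference, and conclude via monotonicity of both the local and the nonlocal energy forms. Where you diverge is in the implementation of the "lift": the paper takes $\psi=\phi+c$ with the \emph{same} $\phi$ on all of $\mathbb{R}^N$ (so that, by invariance of $\Delta_p$ and $(-\Delta)^r_p$ under addition of constants, the strict pointwise inequality transfers to $\psi$ with no error term), tests with $(\psi-u)^+$, and disposes of the nonlocal contribution by the sign argument of \cite[Lemma 9]{LL14}; you instead glue the competitor with $u$ outside the contact set, which makes the monotonicity step immediate ($\psi-u=\eta$ globally) but forces you to (i) give meaning and continuity to $x\mapsto(-\Delta)^r_p$ of a function that is only continuous outside a ball, (ii) justify the identity $\mathcal{E}_{r,p}(\psi,\eta)=\int_{D_\varepsilon}(-\Delta)^r_p\psi\,\eta\,dx$ for a glued competitor whose pointwise fractional $p$-Laplacian is not obviously under control near $\partial D_\varepsilon$, and (iii) prove a comparison $(-\Delta)^r_p\psi\le(-\Delta)^r_p\Phi+o(1)$ uniformly on $D_\varepsilon$. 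All three are plausible but are precisely the technical debts the paper's constant-shift avoids, and you leave them essentially asserted.

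The one point I would flag as a genuine gap relative to the statement you were asked to prove is your opening reinterpretation of Definition \ref{DefViscSol}: the paper's definition evaluates $(-\Delta)^r_p$ on the test function $\phi$ itself, whereas you replace it by $(-\Delta)^r_p\Phi(x_0)$ with $\Phi=\phi$ in $B_{r_0}(x_0)$ and $\Phi=u$ outside. Under your hypotheses ($u-\phi$ has a strict minimum only on $\overline{B_{2r_0}(x_0)}$), the failure of the paper's viscosity inequality does \emph{not} imply the failure of your glued inequality: one would need $(-\Delta)^r_p\Phi(x_0)\le(-\Delta)^r_p\phi(x_0)$, i.e. $u\ge\phi$ on all of $\mathbb{R}^N\setminus B_{r_0}(x_0)$, and a merely local touching gives no information on $\Omega\setminus B_{2r_0}(x_0)$. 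So, as written, your contradiction hypothesis is not the negation of the property in Definition \ref{DefViscSol}, and the argument proves a variant of the lemma rather than the lemma itself. The fix is the one the paper tacitly adopts: work with a test function touching $u$ from below on all of $\Omega$ (as in \cite{JLM99}; the paper's proof explicitly assumes $u>\phi$ for $x\neq x_0$), in which case either your gluing becomes legitimate or, better, you can drop the gluing altogether and run the paper's simpler route with $\psi=\phi+c$. With that adjustment (and with the three technical points above written out, or bypassed as the paper does), your argument closes.
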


\begin{proof}Since the case of the subsolution and the analysis for $v$ are similar, let us just prove that $u\in W^{1,p}_0(\Omega)\cap C(\Omega)$ is a  viscosity supersolution to
\[
-\Delta_p u+\left(-\Delta\right)_p^ru=f_{\lambda}(u)
\]
where $f_{\lambda}(u)=\lambda \frac{2\alpha}{\alpha+\beta}|u|^{\alpha-2}u|v|^{\beta}$.   Fix $x_0 \in \Omega$ and $\phi \in C^2(\Omega)$ a test function such that $u(x_0) = \phi(x_0)$ and $u(x)> \phi(x)$ for $x \neq x_0$. Our goal is to show that
$$
    -\Delta_p \phi(x_0)+ \left(-\Delta\right)_p^r\phi(x_0)\geq f_{\lambda}(\phi(x_0)).
$$
Let us suppose, for the sake of contradiction, that the inequality does not hold. Then, by continuity there exists an $r>0$ small enough such that
$$
   -\Delta_p \phi(x)+ \left(-\Delta\right)_p^r\phi(x) < f_{\lambda}(\phi(x)) \quad \text{in} \quad B_r(x_0).
$$
Now, we define the auxiliary function
$$
    \psi(x) \defeq \phi(x)+ \frac{1}{10} \inf_{\partial B_r(x_0)} (u(x)-\phi(x)).
$$
Notice that $\psi$ fulfils $\psi < u$ on $\partial B_r(x_0)$, $\psi(x_0)> u(x_0)$ and
\begin{equation}\label{EqPsi}
     -\Delta_p \psi(x) + \left(-\Delta\right)_p^r\psi(x) < f_{\lambda_p}(\phi(x)) \quad \text{in} \quad B_r(x_0).
\end{equation}
Since the continuous function $\psi-u$ is negative on $\partial B_r(x_0)$, we have that
\[
\varphi:=(\psi-u)^{+} = \max\{\psi-u, 0\}\in W^{1,p}_0(\Omega).
\]
Taking $\varphi$ as a test function in \eqref{Eq1}, we obtain
\begin{equation}\label{Eq3.4}
    \displaystyle \mathcal{E}_p(u,\varphi) +  \mathcal{E}_{r, p}(u, \varphi) = \int_{B_r(x_0)} f_{\lambda}(u(x))\varphi dx.
\end{equation}
On the other hand, from \eqref{EqPsi} we get
\begin{equation}\label{Eq3.5}
    \displaystyle \mathcal{E}_p(\psi, \varphi) + \mathcal{E}_{r, p}(\psi, \varphi) < \int_{B_r(x_0)} f_{\lambda}(\phi(x))\varphi dx.
\end{equation}
Now, by subtracting \eqref{Eq3.4} from \eqref{Eq3.5}, we obtain
{\small{
$$
\begin{array}{rcl}
  I_1+I_2 & = & \displaystyle \int_{\{\psi \ge u\}}(|\nabla u|^{p-2}\nabla u-|\nabla \psi|^{p-2}\nabla\psi)\nabla (\psi-u)
+\iint_{\mathbb{R}^{2N}}\frac{[U(x,y)-\Psi(x,y)](\varphi(x)-\varphi(y))}{|x-y|^{N+rp}}dxdy \\
   & > & \displaystyle \int_{B_r(x_0)}  \left(f_{\lambda_p}(u)-f_{\lambda_p}(\phi)\right)\varphi > 0.
\end{array}
$$}}
where
\[
U(x,y)=|u(x)-u(y)|^{p-2}(u(x)-u(y)) \ \ \ \mbox{and} \ \ \ \Psi(x,y)=|\psi(x)-\psi(y)|^{p-2}(\psi(x)-\psi(y)).
\]
The monotonicity of the $p-$Laplacian implies that $I_1\le0$ and moreover, arguing as in \cite[Lemma 9]{LL14}, we also have $I_2\le0$, that yields to a contradiction.
\end{proof}

\subsection{Others preliminary results}
\label{opr} For the convenience of the reader, in this subsection we collect some auxiliary results which will play decisive roles in the proofs of Theorems \ref{MThsimp} and \ref{MThm4}.

\begin{lemma}[\cite{BK}]\label{16-8}
Let us consider $u,\tilde u\in\w1p$ such that $u,\tilde u> 0$ in $\Omega$ and set $\varphi=\left(\frac{u^p+\tilde u^p}{2}\right)^{\frac1p}$. Then it results
\[
\|\nabla\varphi\|_{\elle p}^p\le \frac12(\|\nabla u\|_{\elle p}^p+\|\nabla\tilde u\|_{\elle p}^p),
\]
with the equality holding if and only if $u=k\tilde u$ for some constant $k>0$.
\end{lemma}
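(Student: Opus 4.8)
The plan is to exploit the \emph{hidden convexity} of the $p$-Dirichlet energy along the path $t\mapsto\big(t\,u^p+(1-t)\,\tilde u^p\big)^{1/p}$, in the spirit of D\'{\i}az--Sa\'{a} and Belloni--Kawohl. Since $u,\tilde u>0$ in $\Omega$, the weight $\theta\defeq u^p/(u^p+\tilde u^p)$ is well defined and lies in $(0,1)$ a.e.; moreover $\varphi^p=\tfrac12(u^p+\tilde u^p)$, hence $\varphi^p\theta/u^p=\varphi^p(1-\theta)/\tilde u^p=\tfrac12$. Applying the chain rule for Sobolev functions (away from the zero set), one gets the pointwise gradient identity
\[
   \nabla\varphi \;=\; \varphi^{1-p}\,\frac{u^{p-1}\nabla u+\tilde u^{p-1}\nabla\tilde u}{2}
   \;=\; \varphi\Big(\theta\,\frac{\nabla u}{u}+(1-\theta)\,\frac{\nabla\tilde u}{\tilde u}\Big)\qquad\text{a.e. in }\Omega .
\]

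The first step is then the pointwise estimate. Using that $s\mapsto|s|^p$ is convex and $\theta+(1-\theta)=1$,
\[
   |\nabla\varphi|^p \;=\; \varphi^p\,\Big|\theta\,\frac{\nabla u}{u}+(1-\theta)\,\frac{\nabla\tilde u}{\tilde u}\Big|^p
   \;\le\; \varphi^p\Big(\theta\,\frac{|\nabla u|^p}{u^p}+(1-\theta)\,\frac{|\nabla\tilde u|^p}{\tilde u^p}\Big)
   \;=\; \frac{|\nabla u|^p+|\nabla\tilde u|^p}{2}\qquad\text{a.e. in }\Omega .
\]
Since $0\le\varphi\le\max\{u,\tilde u\}$, this also shows $\varphi\in W^{1,p}_0(\Omega)$, and integrating over $\Omega$ yields the claimed inequality $\|\nabla\varphi\|_{L^p(\Omega)}^p\le\tfrac12\big(\|\nabla u\|_{L^p(\Omega)}^p+\|\nabla\tilde u\|_{L^p(\Omega)}^p\big)$.

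For the rigidity part I would argue as follows. If equality holds, then, since the pointwise inequality above is valid a.e.\ and integrates to an equality, its two sides must coincide a.e.\ in $\Omega$; as $s\mapsto|s|^p$ is \emph{strictly} convex for $p>1$ and $\theta\in(0,1)$, this forces $\nabla u/u=\nabla\tilde u/\tilde u$ a.e., i.e.\ $\nabla\log(u/\tilde u)=0$ a.e.\ in $\Omega$. Connectedness of $\Omega$ then gives $u/\tilde u\equiv k$ for some constant $k>0$, and the converse implication is immediate.

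The genuinely technical point is justifying the gradient identity for $\varphi$, and the manipulation of $\nabla\log(u/\tilde u)$, when $u$ or $\tilde u$ is allowed to approach $0$, since then $\log u$ and $u^{-1}\nabla u$ need not be integrable up to $\partial\Omega$. The clean way around this is the standard regularization: replace $u,\tilde u$ by $u_\varepsilon\defeq u+\varepsilon$, $\tilde u_\varepsilon\defeq\tilde u+\varepsilon$ and $\varphi_\varepsilon\defeq\big(\tfrac12(u_\varepsilon^p+\tilde u_\varepsilon^p)\big)^{1/p}$, for which $t\mapsto t^{1/p}$ is Lipschitz on the relevant range and every quotient above is bounded; run the computation to obtain $\|\nabla\varphi_\varepsilon\|_{L^p(\Omega)}^p\le\tfrac12(\|\nabla u\|_{L^p(\Omega)}^p+\|\nabla\tilde u\|_{L^p(\Omega)}^p)$, and let $\varepsilon\to0$ using $\varphi_\varepsilon\to\varphi$ in $W^{1,p}(\Omega)$ together with weak lower semicontinuity of the norm; the equality case is then recovered from the pointwise identity on compact subsets of $\Omega$, where $u,\tilde u$ are bounded away from zero (in the applications to eigenfunctions, by the Harnack inequality). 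Alternatively, one simply invokes \cite{BK}, where this inequality and its equality case are established in full detail.
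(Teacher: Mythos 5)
The paper does not actually prove this lemma; it simply cites Belloni--Kawohl \cite{BK}, and your argument is precisely the hidden-convexity computation underlying that reference: the pointwise identity $\nabla\varphi=\varphi\bigl(\theta\,\nabla u/u+(1-\theta)\,\nabla\tilde u/\tilde u\bigr)$ with $\varphi^p\theta/u^p=\varphi^p(1-\theta)/\tilde u^p=\tfrac12$ is correct, and convexity of $|s|^p$ then gives the inequality exactly as you write. Your closing caveat is also well placed: the only delicate point is the equality case when $u,\tilde u$ are not locally bounded away from zero (so that $\nabla\log(u/\tilde u)$ is meaningful), and handling it by regularization plus local positivity of the eigenfunctions, or by invoking \cite{BK} in full, is consistent with what the paper itself does by relying on that citation.
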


The next two lemmas play a fundamental role in the proof of the simplicity of the eigenvalue. Since we could not find their proofs in the literature, we provide the details for the sake of completeness.
\begin{lemma}\label{bigin} Given $x,y,w,z\ge0$ there holds that
\[ g(x,y,w,z) =\left|(x^p+y^p)^{\frac{1}{p}}-(w^p+z^p)^{\frac{1}{p}}\right|-\left(|x-w|^p+|y-z|^p\right)^{\frac{1}{p}}\leq 0,\]
where $p\geq 1$.
\end{lemma}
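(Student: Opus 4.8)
The plan is to reduce the inequality to a one-variable statement via homogeneity and then treat it as a triangle-type inequality for the $\ell^p$-norm on $\mathbb{R}^2$, being careful about the absolute value on the left. The key observation is that $g$ is positively homogeneous of degree one: $g(tx,ty,tw,tz)=t\,g(x,y,w,z)$ for $t\ge 0$. Hence it suffices to prove $g\le 0$ on the compact set where, say, $(x^p+y^p)^{1/p}+(w^p+z^p)^{1/p}=1$, or more simply to argue directly. Writing $P=(x,y)$ and $Q=(w,z)$ as vectors in $\mathbb{R}^2$ with nonnegative entries, and $\|\cdot\|_p$ for the $\ell^p$ norm, the claim reads
\[
\bigl|\,\|P\|_p-\|Q\|_p\,\bigr|\le \|P-Q\|_p .
\]
This is precisely the reverse triangle inequality for the norm $\|\cdot\|_p$ on $\mathbb{R}^2$ — except that the right-hand side here is $\|(|x-w|,|y-z|)\|_p=\||P|-|Q|\|_p$ componentwise, which for vectors with nonnegative entries coincides with $\|P-Q\|_p$. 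So there is in fact nothing to do beyond invoking Minkowski's inequality: from $\|P\|_p=\|(P-Q)+Q\|_p\le\|P-Q\|_p+\|Q\|_p$ we get $\|P\|_p-\|Q\|_p\le\|P-Q\|_p$, and by symmetry the same bound holds with $P$ and $Q$ interchanged, giving the absolute value.

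First I would state the reduction to $\mathbb{R}^2$ vectors and record that $|x-w|=|\,|x|-|w|\,|$ etc. is irrelevant here since all of $x,y,w,z\ge 0$, so the term $(|x-w|^p+|y-z|^p)^{1/p}$ is simply $\|P-Q\|_p$. Then I would apply the reverse triangle inequality for $\|\cdot\|_p$ (valid for all $p\ge 1$, being a genuine norm on $\mathbb{R}^2$), which is an immediate consequence of Minkowski's inequality, to conclude. If one prefers to avoid quoting Minkowski, the case $p=1$ is elementary ($|x-w|+|y-z|\ge |(x+y)-(w+z)|$), and for $p>1$ one can expand and use convexity of $t\mapsto t^p$, but quoting the norm property is cleaner.

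There is essentially no serious obstacle: the only point requiring a line of care is making sure the absolute value on the left is handled — i.e. proving the bound for $\|P\|_p-\|Q\|_p$ and then noting the problem is symmetric in $(x,y)\leftrightarrow(w,z)$ so the same bound applies to $\|Q\|_p-\|P\|_p$. A secondary remark worth including is the equality case, should it be needed later: equality in Minkowski's inequality for $p>1$ forces $P$ and $Q$ to be positively proportional, so $g=0$ iff one of $(x,y)$, $(w,z)$ is a nonnegative multiple of the other; but since the statement only asserts $g\le 0$, this can be omitted.
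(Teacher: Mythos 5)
Your proof is correct, and it is genuinely different from (and much shorter than) the one in the paper. You recognize that, since $x,y,w,z\ge 0$, the quantity $\left(|x-w|^p+|y-z|^p\right)^{1/p}$ is exactly $\|P-Q\|_p$ for $P=(x,y)$, $Q=(w,z)$ in $\mathbb{R}^2$, so the claim is nothing but the reverse triangle inequality $\bigl|\,\|P\|_p-\|Q\|_p\,\bigr|\le\|P-Q\|_p$, an immediate consequence of Minkowski's inequality for every $p\ge 1$; the symmetry remark handling the absolute value is the only point of care and you address it. The paper instead normalizes by the largest variable (degree-one homogeneity, the same reduction you mention) and then proves the resulting three-variable inequality on $[0,1]^3$ by a five-step calculus case analysis over the faces of the cube and the interior critical points (the technical Lemma in the Appendix); the authors state they could not locate a proof in the literature, which your argument shows is unnecessary, since the inequality is a classical norm property. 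What the paper's route buys is only a self-contained elementary verification that never invokes Minkowski; what yours buys is brevity, transparency, and, as you note in passing, the equality case ($P$ and $Q$ positively proportional, for $p>1$) for free — though the paper's simplicity argument ultimately extracts the equality information from its Lemmas \ref{16-8} and \ref{18-8} rather than from Lemma \ref{bigin}, so that remark can indeed be omitted.
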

\begin{proof}
{Indeed, without loss of generality we may suppose that the largest norm between $x,y,w$ and $z$ is given by $|w|>0$. Then, by considering
\[g(x,y,w,z) =  |w|f\left(\dfrac{|x|}{|w|},\dfrac{|y|}{|w|},\dfrac{|z|}{|w|}\right)\]
we can apply Lemma \ref{lemma1ap} to the function $f$ and conclude that $g$ is a non-negative function.}
\end{proof}

\begin{lemma}\label{18-8}
Let $p>1$ and $\alpha, \beta>0$ such that $\alpha+\beta=p$. Then, for any quadruple of strictly positive real numbers $a,b,c,d$, it holds true that
\[
(a^p+b^p)^{\frac{\alpha}{p}}(c^p+d^p)^{\frac{\beta}{p}}\ge a^{\alpha}c^{\beta}+b^{\alpha}d^{\beta},
\]
whit the equality holding if and only if $(a,c)=k(b,d)$ for some positive constant $k>0$.
\end{lemma}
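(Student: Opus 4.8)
The plan is to reduce the inequality to a one-variable convexity/monotonicity statement by homogeneity, exactly in the spirit of the reduction used for Lemma \ref{bigin}. Since all four numbers are strictly positive and the claimed inequality is positively $p$-homogeneous in the pair $(a,b)$ and also in the pair $(c,d)$, I would first normalize: set $t = a/b > 0$ and $\tau = c/d > 0$, divide both sides by $b^{\alpha}d^{\beta}$, and observe that the inequality becomes
\[
(t^p+1)^{\frac{\alpha}{p}}(\tau^p+1)^{\frac{\beta}{p}} \ge t^{\alpha}\tau^{\beta}+1,
\]
with equality to be characterized as $t=\tau$. Thus it suffices to prove that for all $t,\tau>0$,
\[
F(t,\tau) \defeq (t^p+1)^{\frac{\alpha}{p}}(\tau^p+1)^{\frac{\beta}{p}} - t^{\alpha}\tau^{\beta}-1 \ge 0,
\]
with equality iff $t=\tau$.

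The key step is to recognize this as an instance of Hölder's inequality. Indeed, write $\theta = \alpha/p$ and $1-\theta = \beta/p$, so $\theta \in (0,1)$ since $\alpha,\beta > 0$ and $\alpha+\beta = p$. Consider the two vectors $U = (t^p, 1)$ and $V = (\tau^p, 1)$ in $\R^2$. Hölder's inequality with conjugate exponents $1/\theta$ and $1/(1-\theta)$ gives
\[
U_1^{\theta}V_1^{1-\theta} + U_2^{\theta}V_2^{1-\theta} \le \left(U_1+U_2\right)^{\theta}\left(V_1+V_2\right)^{1-\theta},
\]
that is,
\[
t^{p\theta}\tau^{p(1-\theta)} + 1 \le (t^p+1)^{\theta}(\tau^p+1)^{1-\theta},
\]
which is precisely $t^{\alpha}\tau^{\beta} + 1 \le (t^p+1)^{\frac{\alpha}{p}}(\tau^p+1)^{\frac{\beta}{p}}$, i.e. $F(t,\tau)\ge 0$. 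Undoing the normalization, this yields the stated inequality.

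For the equality case, I would invoke the equality condition in Hölder's inequality: equality holds iff the vectors $(U_1,U_2)$ and $(V_1,V_2)$ are proportional (here neither is the zero vector since all entries are positive), i.e. $(t^p,1) = \mu(\tau^p,1)$ for some $\mu>0$, which forces $\mu=1$ and hence $t^p=\tau^p$, i.e. $t=\tau$ (using $t,\tau>0$). Translating back, $a/b = c/d$, i.e. $(a,c) = k(b,d)$ with $k = a/b = c/d > 0$. Conversely, if $(a,c)=k(b,d)$ a direct substitution shows both sides equal $(k^p+1)^{\frac{\alpha}{p}}(k^p+1)^{\frac{\beta}{p}}\, b^{\alpha}d^{\beta} = (k^p+1)b^{\alpha}d^{\beta}$ and $k^{\alpha}b^{\alpha}\cdot\text{(corresponding terms)}$ — more simply, $a^{\alpha}c^{\beta}+b^{\alpha}d^{\beta} = (k^p+1)b^\alpha d^\beta$, so equality indeed holds. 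I do not anticipate a serious obstacle here: the only mild subtlety is making sure the exponents $1/\theta$, $1/(1-\theta)$ are genuinely conjugate and admissible, which is immediate from $\alpha+\beta=p$ and $\alpha,\beta>0$; one could alternatively run the argument by fixing $\tau$ and checking concavity of $t\mapsto (t^p+1)^{\alpha/p}$ against the relevant power, but the Hölder route is cleaner and hands us the equality case for free.
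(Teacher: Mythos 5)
Your proof is correct, and it reaches the conclusion by a genuinely different mechanism than the paper. Both arguments perform the same first reduction (divide by $b^{\alpha}d^{\beta}$ and study $(1+t^p)^{\alpha/p}(1+\tau^p)^{\beta/p}-1-t^{\alpha}\tau^{\beta}$), but at that point the paper proceeds by elementary calculus: it fixes $\tau$, computes $\partial_t$ of the reduced function, shows the derivative is negative for $t<\tau$ and positive for $t>\tau$, and concludes that the minimum in $t$ is attained exactly at $t=\tau$, where the function vanishes; the strict sign of the derivative off the diagonal is what delivers the equality case. You instead recognize the reduced inequality as the two-point form of H\"older's inequality with exponents $1/\theta$ and $1/(1-\theta)$, $\theta=\alpha/p\in(0,1)$, applied to $(t^p,1)$ and $(\tau^p,1)$, and you read off the equality case from the proportionality condition in H\"older. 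This is cleaner and shorter, avoids any differentiation or sign analysis, and in fact makes the preliminary normalization unnecessary: applying the same two-point H\"older inequality directly to $(a^p,b^p)$ and $(c^p,d^p)$ gives $(a^p+b^p)^{\alpha/p}(c^p+d^p)^{\beta/p}\ge a^{\alpha}c^{\beta}+b^{\alpha}d^{\beta}$ at once, with equality iff $(a^p,b^p)$ and $(c^p,d^p)$ are proportional, i.e.\ $a/b=c/d$. The paper's route buys self-containedness (no appeal to the equality case of H\"older), which fits its stated aim of providing details it could not locate in the literature. One cosmetic slip on your side: the inequality is $\alpha$-homogeneous in $(a,b)$ and $\beta$-homogeneous in $(c,d)$, not $p$-homogeneous in each pair; this does not affect your argument, since the division by $b^{\alpha}d^{\beta}$ that you actually carry out is the correct one.
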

\begin{proof}
Proving inequality \eqref{18-8} is equivalent to show that
\[
f(x,y)=(1+x^p)^{\frac{\alpha}{p}}(1+y^p)^{\frac{\beta}{p}}-1-x^{\alpha}y^{\beta}\ge0 \ \ \ \forall \ x,y\in(0,\infty)\times(0,\infty)
\]
with the equality satisfied if and only if $x=y$. We have that $f(x,x)=0$ and moreover
$$
\begin{array}{rcl}
  \frac{\partial}{\partial x} f(x,y) & = & \alpha(1+x^p)^{\frac{\alpha}{p}-1}(1+y^{p})^{\frac{\beta}{p}}x^{p-1}-\alpha x^{\alpha-1}y^{\beta} \\
   & = & \alpha(1+y^{p})^{\frac{\beta}{p}}x^{\alpha-1}\left[\frac{x^{\beta}}{(1+x^p)^{\frac{\beta}{p}}}-\frac{y^{\beta}}{(1+y^p)^{\frac{\beta}{p}}}\right].
\end{array}
$$

Thus we have that
\[
\frac{\partial}{\partial x} f(x,y)<0 \ \ \mbox{for} \ x<y \ \ \ \mbox{and} \ \ \ \frac{\partial}{\partial x} f(x,y)>0 \ \ \mbox{for} \ x>y
\]
and the proof is concluded.
\end{proof}

As we anticipate in the introduction, the H\"{o}lder infinite Laplace operator $\L_{\infty,t}$, defined in \eqref{l.infty}--\eqref{l.infty.pm}, plays an essential role in describing the limiting problem. Particularly, the following Lemma relates the fractional $p-$Laplacian with $\L_{t, \infty}^{\pm}$.

In order to be self contained,  we provide an alternative proof for the next lemma. The interested reader can have a look at different proofs, see e.g. {\cite[Lemma 6.5]{CLM} and \cite[Lemma 6.1]{FerLla}}.

\begin{lemma}\label{Lemlim-op} Let $\varphi  \in C_0^1(\overline{\Omega})$ be a test function (extended by zero outside $\Omega$) and $x_n \to x$ as $p_n  \infty$. Then, given $ \{\varphi_n\}\subset C_0^1(\overline{\Omega})$ such that $\varphi_n \to \varphi$ uniformly in $\Omega$ there holds that
$$
   \L_{p_n, t}^+(\varphi_n(x_p)) \to \L_{\infty, t}^{+}\varphi(x_0) \quad \text{and} \quad \L_{p_n, t}^-(\varphi_n(x_p)) \to -\L_{\infty, t}^{-}\varphi(x_0)
$$
where
\begin{equation}\label{18-10}
\left\{
\begin{array}{rcl}
   \displaystyle (\L_{p_n, t}^+)^{p_n-1}\varphi(x) & \defeq& 2 \displaystyle \int_{\mathbb{R}^N} \frac{|\varphi(x)-\varphi(y)|^{p_n-2}}{|x-y|^{N+tp_n}}(\varphi(x)-\varphi(y))^{+}dy\\
   \displaystyle (\L_{p_n, t}^-)^{p_n-1} \varphi(x) & \defeq& \displaystyle 2 \int_{\mathbb{R}^N} \frac{|\varphi(x)-\varphi(y)|^{p_n-2}}{|x-y|^{N+tp_n}}(\varphi(x)-\varphi(y))^{-}dy.
\end{array}
\right.
\end{equation}
and
\[
(-\Delta)_{p_n}^t\varphi(x)=(\L_{p_n, t}^+)^{p_n-1}\varphi(x) - (\L_{p_n, t}^-)^{p_n-1} \varphi(x)
\]
\end{lemma}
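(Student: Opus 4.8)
The plan is to reduce the convergence statement to a careful analysis of the quantities $\L_{p_n,t}^{\pm}(\varphi_n(x_n))$ viewed as $L^{p_n}$-type averages of the difference quotient $\dfrac{\varphi(x)-\varphi(y)}{|x-y|^{t}}$, and to exploit the standard principle that $L^{p_n}$-norms converge to the $L^{\infty}$-norm (the supremum) of the quotient as $p_n\to\infty$. More precisely, one rewrites
\[
\bigl(\L_{p_n,t}^{+}\varphi(x)\bigr)^{p_n-1}=2\int_{\mathbb{R}^N}\left(\frac{(\varphi(x)-\varphi(y))^{+}}{|x-y|^{t}}\right)^{p_n-1}\frac{dy}{|x-y|^{N+t}},
\]
so that $\L_{p_n,t}^{+}\varphi(x)$ is (up to the lower-order weight $|x-y|^{-N-t}$, which is integrable near infinity because $\varphi$ is compactly supported and bounded near the diagonal after the cancellation) essentially the $L^{p_n-1}$-norm of the positive part of the $t$-Hölder quotient against a fixed finite measure. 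First I would make this rigorous: fix $\varphi\in C_0^1(\overline\Omega)$, note that $y\mapsto (\varphi(x)-\varphi(y))^{+}/|x-y|^{t}$ is a bounded continuous function on $\mathbb{R}^N$ (bounded near $y=x$ by $\|\nabla\varphi\|_\infty |x-y|^{1-t}$ and globally by $2\|\varphi\|_\infty/|x-y|^{t}\to 0$), and that the measure $d\mu_x=|x-y|^{-N-t}\,dy$ restricted away from $x$ is finite on the region where the integrand is nonzero. Then the classical fact $\|g\|_{L^{q}(\mu)}\to\|g\|_{L^\infty(\mu)}$ as $q\to\infty$ gives $\L_{p_n,t}^{+}\varphi(x)\to \sup_{y}\dfrac{(\varphi(x)-\varphi(y))^{+}}{|x-y|^{t}}=\L_{\infty,t}^{+}\varphi(x)$ for fixed $x$; similarly $\L_{p_n,t}^{-}\varphi(x)\to \sup_{y}\dfrac{(\varphi(x)-\varphi(y))^{-}}{|x-y|^{t}}=-\L_{\infty,t}^{-}\varphi(x)$, the sign matching the definitions in \eqref{l.infty.pm}.

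Next I would upgrade this pointwise-in-$x$, fixed-$\varphi$ statement to the required joint convergence as $x_n\to x_0$ and $\varphi_n\to\varphi$ uniformly. The key is a uniform (in $n$) equicontinuity/stability estimate: since all $\varphi_n$ lie in $C_0^1(\overline\Omega)$ and $\varphi_n\to\varphi$ uniformly, one controls $\bigl|\L_{p_n,t}^{+}\varphi_n(x_n)-\L_{p_n,t}^{+}\varphi(x_0)\bigr|$ by splitting the integral into a small ball $B_\delta(x_n)$ — where the bound $|\varphi_n(x_n)-\varphi_n(y)|\le \|\nabla\varphi_n\|_\infty|x_n-y|$ and the analogous bound for $\varphi$ make the contribution $O(\delta^{1-t})$ uniformly — and its complement, where the integrand is a genuinely bounded function and one uses the uniform convergence $\varphi_n\to\varphi$ together with continuity of $x\mapsto\varphi(x)$ and dominated convergence. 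Taking $\delta$ small first and then $n$ large yields the convergence. One should also record the elementary two-sided bound $c\le \L_{p_n,t}^{\pm}\varphi_n(x_n)\le \|\varphi_n\|_{C^1}\,\mathrm{diam}(\Omega)^{1-t}+C$ ensuring no mass escapes and justifying all limit exchanges.

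Finally, the decomposition $(-\Delta)_{p_n}^{t}\varphi(x)=\bigl(\L_{p_n,t}^{+}\varphi(x)\bigr)^{p_n-1}-\bigl(\L_{p_n,t}^{-}\varphi(x)\bigr)^{p_n-1}$ is just the algebraic identity obtained by splitting $|\varphi(x)-\varphi(y)|^{p_n-2}(\varphi(x)-\varphi(y))=(\varphi(x)-\varphi(y))^{+p_n-1}-(\varphi(x)-\varphi(y))^{-p_n-1}$ inside the principal-value integral in \eqref{defop}, so it requires only checking that the principal value is not needed once $\varphi\in C^1$ (the singularity at $y=x$ is integrable after cancellation). I expect the main obstacle to be the passage from fixed $\varphi$ to the moving pair $(\varphi_n,x_n)$ while the exponent $p_n$ itself diverges: one cannot simply invoke continuity of a fixed functional, so the uniform-in-$n$ near-diagonal estimate $O(\delta^{1-t})$ — which crucially uses $t<1$ and a uniform $C^1$ bound on the $\varphi_n$ — is the technical heart of the argument, together with making sure the ``$L^{q}\to L^\infty$'' convergence is itself uniform over the relevant family of integrands (which follows because they are uniformly bounded and the underlying measures have uniformly bounded total mass on the support region).
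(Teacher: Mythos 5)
Your proposal is sound in substance but follows a genuinely different route from the paper's. You keep a fixed reference measure $d\mu_x=|x-y|^{-N-t}\,dy$ and a fixed integrand, the H\"older quotient $g_x(y)=(\varphi(x)-\varphi(y))^{+}|x-y|^{-t}$, and then invoke the general principle that $L^{q}(\mu)$-norms tend to the $L^{\infty}(\mu)$-norm, handling the moving data $(\varphi_n,x_n)$ by a near/far splitting. The paper instead absorbs the whole weight into an $n$-dependent function $f_n^{\pm}(y)=(\varphi(x_n)-\varphi(y))^{\pm}|x_n-y|^{-(N+tp_n)/(p_n-1)}$, proves $f_n^{\pm}\to f^{\pm}$ uniformly on $\Omega$, computes the contribution of $\Omega^c$ explicitly (this is where the term $(\varphi(x_0))^{\pm}\,d(x_0,\partial\Omega)^{-t}$ appears), and then applies the bespoke Lemma \ref{1501} to get the maximum of the interior supremum and the exterior limit, finally identified with the supremum over all of $\mathbb{R}^N$. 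Your route buys you two things: you never need Lemma \ref{1501} nor the explicit boundary-distance computation (both are subsumed in the supremum over $\mathbb{R}^N$), and, unlike the paper's proof, which silently works with $\varphi$ evaluated at $x_n$ only, you actually address the perturbation $\varphi_n\neq\varphi$, which is the form of the lemma used in the proof of Theorem \ref{MThm4}.

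Two points need tightening. First, $\mu_x$ is \emph{not} finite where the integrand may be nonzero: it has infinite mass near $y=x$, and $(\varphi(x)-\varphi(y))^{+}$ can be positive arbitrarily close to $x$. So you must quote the $L^{q}\to L^{\infty}$ fact in its version for non-finite measures, which requires $g_x\in L^{q_0}(\mu_x)$ for some finite $q_0$; this follows at once from your own near-diagonal bound $g_x(y)\le\|\nabla\varphi\|_{\infty}|x-y|^{1-t}$ (using $t<1$), so the fix is immediate, but the sentence about the measure being finite on the relevant region is not correct as written. Second, your stability step (the uniform $O(\delta^{1-t})$ estimate) genuinely needs a uniform $C^1$ (or at least uniform $C^{0,t'}$, $t'>t$) bound on $\{\varphi_n\}$, which is not among the stated hypotheses, where only uniform convergence is assumed; some such bound is in fact necessary, since small-amplitude oscillations concentrated at scales $\varepsilon_n\to0$ make $\mathcal{L}^{+}_{p_n,t}\varphi_n(x_n)$ blow up while $\varphi_n\to\varphi$ uniformly. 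This is a defensible strengthening rather than a flaw of your argument: the test functions $\phi_n$ to which the lemma is applied in Theorem \ref{MThm4} are uniformly bounded in $C^1$ (indeed $C^1$-convergent), and the paper's own proof sidesteps the issue only by tacitly taking $\varphi_n=\varphi$.
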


\begin{proof}
Set $\{f_n^\pm\}\subset C(\overline{\Omega})$ and $f^\pm \in C(\overline{\Omega})$ given by
\begin{equation}
\nonumber
f_n^\pm(y)=\begin{cases}
\dfrac{(\varphi(x_n)-\varphi(y))^\pm}{|x_n-y|^{\frac{N+tp_n}{p_n-1}}} &\mbox{ if }y\neq x_n \\
0 &\mbox{ as } y=x_n
\end{cases}
\end{equation}
and
\begin{equation}
\nonumber
f^\pm(y)=\begin{cases}
\dfrac{(\varphi(x_0)-\varphi(y))^\pm}{|x_0-y|^{t}} &\mbox{ if }y\neq x_0 \\
0 &\mbox{ if } y=x_0.
\end{cases}
\end{equation}

It is obvious that
\begin{equation}
\label{1434}
f_n^\pm \longrightarrow f^\pm \mbox{ uniformly in } \Omega \mbox{ as } n \to \infty.
\end{equation}
Indeed, suppose by contradiction that there exist $\epsilon_0>0$, $\{y_{n_k}\}\subset \Omega$ and $f_{n_k}^\pm$ such that
\[ |f^\pm_{n_k}(y_{n_k})-f^\pm(y_{n_k})|\geq \epsilon_0 \ \forall n \in \mathbb{N}.\]
By using that $\varphi \in C^1_0(\overline{\Omega})$ and that $t \in (0,1)$, the last inequality clearly leads us into a contradiction.

Now, remark that
\begin{equation}
\label{1439}
\left( \int_{\Omega^c} \left(f^\pm_n(y)\right)^{p_n-1}) dy\right)^{\frac{1}{p_n-1}} \longrightarrow \dfrac{(\varphi(x_0))^\pm}{d(x_0,\partial \Omega)^t} \mbox{ as } n \to \infty.
\end{equation}

Indeed, it is enough to observe that
\begin{align*}
\left( \int_{\Omega^c} \left(f^{\pm}_n(y)\right)^{p_n-1} dy\right)^{\frac{1}{p_n-1}}&= \left(2\left|\partial B(0,1)\right|\right)^\frac{1}{p_n-1}(\varphi(x_0))^\pm\left(\int_{d(x_n,\partial \Omega)}^\infty \rho^{-tp_n+1} d\rho\right)^\frac{1}{p_n-1}\\
&=c_n \dfrac{(\varphi(x_0))^\pm}{d(x_0,\partial \Omega)^{\frac{t p_n }{p_n-1}}}
\end{align*}
where \[c_n = \dfrac{\left(2\left|\partial B(0,1)\right|\right)^\frac{1}{p_n-1}}{\left(tp_n\right)^{\frac{1}{p_n-1}}}\to 1 \mbox{ as } n\to \infty.
\]

Thence, by Lemma \ref{1501}
\[\lim_{n\to\infty}\mathcal{L}^{\pm}_{p_n,t} \varphi (x_n) =\max\left\{\left\|\dfrac{(\varphi(x_0)-\varphi(y))^\pm}{|x_0-y|^t}\right\|_{L^\infty(\Omega)}, \dfrac{\varphi(x_0)^\pm}{d(x_0,\partial \Omega)^t}\right\}\]
However, since it is clear that
\[\max\left\{\left\|\dfrac{(\varphi(x_0)-\varphi(y))^+}{|x_0-y|^t}\right\|_{L^\infty(\Omega)}, \dfrac{\varphi(x_0)^+}{d(x_0,\partial \Omega)^t}\right\}=\sup_{y\in \mathbb{R}^N} \dfrac{\varphi(x_0)-\varphi(y)}{|x_0-y|^t}\]
and
\[\min\left\{\left\|\dfrac{(\varphi(x_0)-\varphi(y))^-}{|x_0-y|^t}\right\|_{L^\infty(\Omega)}, \dfrac{\varphi(x_0)^-}{d(x_0,\partial \Omega)^t}\right\}=-\inf_{y\in \mathbb{R}^N} \dfrac{\varphi(x_0)-\varphi(y)}{|x_0-y|^t},\]
the result follows.
\end{proof}

\subsection{Strong Maximum Principle}
\label{Maximum Principle}
In this section, we provide a version of the Strong Maximum Principle, which is obtained using ideas inspired by the works \cite{BrP} and \cite{DiCKP}.

\begin{prop}\label{smp}
Let $\Omega\subset\mathbb{R}^N$ be an open bounded domain, $s\in(0,1)$ and $p\in(1,\infty)$. If $v\in\w1p$ is such that $v\ge0$ in $\Omega$ and
\begin{equation} \label{14:43}
\io|\nabla v|^{p-2}\nabla v\nabla\phi+\int\int_{\mathbb{R}^{2N}}\frac{|v(x)-v(y)|^{p-2}(v(x)-v(y))(\phi(x)-\phi(y))}{|x-y|^{N+ps}}\ge0
\end{equation}
for all $\phi\in \w1p$ with $\phi\ge0$, then either $v=0$ or $v>0$ almost everywhere in $\Omega$.
\end{prop}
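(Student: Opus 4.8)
The plan is to prove the dichotomy by combining a purely local strong maximum principle argument with the sign contribution coming from the nonlocal term, which in fact works \emph{in our favour}. First, observe that if $v$ is not identically zero, then $v>0$ on a set of positive measure. The strategy is to show that the set $\{v=0\}$ cannot have positive measure, hence $v>0$ a.e.

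The first step is to localize. Suppose, for contradiction, that both $A \defeq \{v=0\}$ and $\Omega\setminus A$ have positive measure. The key observation is that for any nonnegative test function $\phi\in\w1p$ supported where $v$ vanishes, the \emph{local} term in \eqref{14:43} vanishes (since $\nabla v = 0$ a.e.\ on $\{v=0\}$), while the nonlocal term becomes
\[
\int\int_{\mathbb{R}^{2N}}\frac{|v(x)-v(y)|^{p-2}(v(x)-v(y))(\phi(x)-\phi(y))}{|x-y|^{N+ps}}\,dx\,dy.
\]
I would split this double integral according to whether $x$ and $y$ lie in $A$ or not, using the symmetry of the kernel, and exploit that $\phi\ge0$ vanishes outside a small region. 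The decisive point is that, when $v(x)=0$ and $v(y)>0$, the integrand has a definite (negative) sign: $|v(x)-v(y)|^{p-2}(v(x)-v(y)) = -v(y)^{p-1}<0$, while $(\phi(x)-\phi(y))$ can be made $\ge0$ by choosing $\phi$ appropriately (e.g.\ $\phi$ positive on $A$ and zero elsewhere). Plugging such $\phi$ into \eqref{14:43} forces
\[
0 \le -\int\int_{A\times(\Omega\setminus A)}\frac{v(y)^{p-1}\phi(x)}{|x-y|^{N+ps}}\,dx\,dy \le 0,
\]
and since $v>0$ on a positive-measure subset of $\Omega\setminus A$ while $\phi>0$ on $A$, the right-hand side is strictly negative, a contradiction. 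This already rules out $A$ having positive measure \emph{provided} the interaction between $A$ and the positivity set is nondegenerate — which it is, because $|x-y|^{-(N+ps)}$ is integrable away from the diagonal and both sets have positive measure.

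To make this rigorous one must choose $\phi$ carefully so that it is admissible (in $\w1p$, nonnegative) and makes $\phi(x)-\phi(y)\ge 0$ for $x\in A$, $y\notin A$: taking $\phi$ to be a fixed nonnegative $W^{1,p}_0$ function that is bounded below by a positive constant on a chosen compact subset $K\subset A$ of positive measure and has small support works, after discarding the (sign-indefinite) contribution from pairs $x,y\in A$ (which vanishes since $v\equiv0$ there) and controlling the pairs where both coordinates avoid $A$. Actually the cleanest route, following \cite{BrP} and \cite{DiCKP}, is: the contribution from $A\times A$ is zero; the contribution from $(\Omega\setminus A)\times(\Omega\setminus A)$ and from pairs outside $\Omega$ can be handled by choosing $\supp\phi\subset A$ so that $\phi(x)-\phi(y)=\phi(x)\ge0$ whenever $y\notin\supp\phi$; then the entire left-hand side of \eqref{14:43} reduces to $-2\int\int_{\supp\phi\,\times\,(\mathbb{R}^N\setminus\supp\phi)} v(y)^{p-1}\phi(x)|x-y|^{-(N+ps)}\,dx\,dy$, which is $\le0$ and is $<0$ unless $v\equiv0$ a.e.\ outside $\supp\phi$. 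Letting $\supp\phi$ shrink and using that $\supp\phi$ can be placed around a.e.\ point of $A$ forces $v\equiv0$, contradicting $v\not\equiv0$.

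The main obstacle I anticipate is \textbf{verifying admissibility of the cutoff test functions and justifying the splitting of the double integral}: one needs the individual pieces (e.g.\ $\int\int_{A\times A}$) to be well-defined, which requires knowing $v^{p-1}|x-y|^{-(N+ps)}$ is integrable on the relevant product sets — this follows from $v\in\w1p\hookrightarrow W^{s,p}$ (Lemma \ref{nofrac}) since $v$ has finite Gagliardo seminorm, so $|v(x)-v(y)|^{p-1}|x-y|^{-(N+ps)}$ times a bounded $\phi$-factor is integrable by Hölder. A secondary technical point is that the argument as stated shows $v=0$ a.e.\ or $v>0$ a.e., but does \emph{not} by itself give pointwise positivity; since the statement only claims the a.e.\ dichotomy, this suffices, and no further regularity or Harnack-type estimate is needed. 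The local term being merely $\ge 0$ (rather than having a sign we must fight) is what makes the nonlocal contribution alone enough to close the argument — this is the simplification afforded by the combined local/nonlocal structure.
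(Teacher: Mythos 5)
Your proposal reproduces only the easy half of the paper's argument and has a genuine gap at its core: the existence of the test function. You need a nontrivial $\phi\in W^{1,p}_0(\Omega)$, $\phi\ge0$, whose positivity set is contained (up to a null set) in $A=\{v=0\}$, so that the local term vanishes and the nonlocal term reduces to the signed quantity $-2\iint v(y)^{p-1}\phi(x)|x-y|^{-(N+ps)}$. But $A$ is merely a measurable set: it can have positive measure and empty interior (a fat Cantor/Swiss-cheese type set), and then no such $\phi$ need exist -- indeed, whenever the Sobolev representative is continuous (e.g. $p>N$) there is none, and in general nothing guarantees one. The suggested fallback, taking $\phi$ bounded below on a compact $K\subset A$ but with small support not contained in $A$, destroys exactly what the argument needs: on $\mathrm{supp}\,\phi\setminus A$ one has $v>0$, so the local term $\int_\Omega|\nabla v|^{p-2}\nabla v\cdot\nabla\phi$ no longer vanishes and carries no sign, and the nonlocal integrand is no longer pointwise nonpositive; the contradiction evaporates. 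The alternative phrasing ``$\phi$ positive on $A$ and zero elsewhere'' is not an admissible $W^{1,p}$ function in general (indicator-type functions are not Sobolev), and ``letting $\mathrm{supp}\,\phi$ shrink around a.e.\ point of $A$'' does not repair this, since every such shrinking support still meets $\{v>0\}$ when $A$ has empty interior.

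This missing step is precisely what the paper's proof supplies. There the argument is split: first it is shown that if $v=0$ on a set $\omega$ of positive measure, then $v\equiv0$ a.e.\ on some ball $B_R(x_0)\subset\Omega$; this is the technical heart, carried out with the test function $\phi=\psi^p/(\delta+v)^{p-1}$ (with $\psi$ a cutoff on $B_{2R}(x_0)$), the logarithmic lemma of \cite{DiCKP}, Young's inequality, and a Poincar\'e--Chebyshev estimate for $\log(1+v/\delta)$, letting $\delta\to0$. Only after the zero set is known to contain a ball does the paper run the sign argument you propose (with $\varphi\in C^\infty_c(B_R(x_0))$), which is then unproblematic because a ball has nonempty interior, and the nonlocal term forces $v\equiv0$ on $B_R(x_0)^c$ as well. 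So your plan is sound for Step 1 of the paper but cannot start from a general positive-measure zero set; you need a quantitative local estimate of the above type (or some equivalent density/Caccioppoli argument) to pass from ``$v=0$ on a set of positive measure'' to ``$v=0$ on a ball'' before the nonlocal sign argument can be invoked.
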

\begin{proof}
Firstly, we claim that if $v\equiv 0$ a.e. in a ball $B_R(x_0)\subset \Omega$, than $v\equiv0$ a.e. in $\Omega$.
Indeed,  let us take $0\le\varphi\in C^{\infty}_c(B_R(x_0))$ as a test function in \eqref{14:43}. We have that
$$
\begin{array}{rcl}
  0 & \leq & \displaystyle \underbrace{\io|\nabla v|^{p-2}\nabla v\nabla\varphi}_{=0}+\int\int_{\mathbb{R}^{2N}}\frac{|v(x)-v(y)|^{p-2}(v(x)-v(y))(\varphi(x)-\varphi(y))}{|x-y|^{N+ps}} \\
   & = & \displaystyle \int_{B_R(x_0)}\int_{B_R(x_0)^c}\frac{v(x)^{p-1}\varphi(y)}{|x-y|^{N+ps}}
    \le  \displaystyle -\frac{2}{\mbox{diam}(\Omega)^{N+ps}}\int_{B_R(x_0)^c}v(x)^{p-1}\int_{B_R(x_0)}\varphi(y),
\end{array}
$$
namely $v\equiv0$ a.e. in $B^c_R$.

Now, in order to conclude it is enough prove that if $v(x)=0$ in a set of positive measure, then there exists $
x_0 \in \Omega$ and $R>0$, such that $B_R(x_0)\subset \Omega$ for which $v\equiv 0$ in $B_R(x_0)$. Hence, let us assume that $v(x)=0$ for all $x\in\omega$, where $\omega\subset\Omega$ is a measurable set with $|\omega|>0$. Then there exist $B_{2R}(x_0)\subset\Omega$ such that $|\omega\cap B_R(x_0)|>0$. Take hence $\phi=\frac{\psi^p}{(\delta+v)^{p-1}}$ as test function in \eqref{14:43}, where $\delta>0$ and $\psi\in C^{\infty}_c(B_{2R}(x_0))$ such that $\psi\equiv1$ in $B_R(x_0)$. It results that
\begin{equation}\label{15:24}
(p-1)\io\frac{|\nabla v|^p}{(\delta+v)^{p}}\psi^p\le H(v,\psi,\delta)+p\io\frac{|\nabla v|^{p-1}|\nabla\psi|}{(\delta+v)^{p-1}}\psi^{p-1},
\end{equation}
where
\[
H(v,\psi,\delta)=\int\int_{\mathbb{R}^{2N}}\frac{|v(x)-v(y)|^{p-2}(v(x)-v(y))}{|x-y|^{N+ps}}\left(\frac{\psi^p(x)}{(\delta+v(x))}-\frac{\psi^p(y)}{(\delta+v(y))}\right).
\]
The logarithmic Lemma of \cite[Lemma 1.3]{DiCKP}, applied to our case, assures us that there exists a constant $C=C(p)$, that does not depend on $\delta$, such that
\begin{align*}
H(v,\psi,\delta)\le &C(p) \left(\int\int_{\mathbb{R}^{2N}}\frac{|\psi(x)-\psi(y)|^{p}}{|x-y|^{N+ps}}+R^{N-sp}\right)
\end{align*}
Moreover, using Young inequality in the second term of the right hand side of \eqref{15:24}, we get
\[
p\io\frac{|\nabla v|^{p-1}|\nabla\psi|}{(\delta+v)^{p-1}}\psi^{p-1}\le \frac{p-1}{2}\io\frac{|\nabla v|^p}{(\delta+v)^{p}}\psi^p+\left(\frac{2}{p-1}\right)^{p-1}\io|\nabla \psi|^p.
\]
Plugging these two pieces of information in \eqref{15:24}, it follows that
\[
\int_{B_R(x_0)}\left|\nabla\log\left(1+\frac{v}{\delta}\right)\right|^p=\int_{B_R(x_0)}\frac{|\nabla v|^p}{(\delta+v)^{p}} \le \bar C(p,\psi).
\]
Recalling that $|\omega\cap B_{R}|>0$, we can apply Poincar\'e (see for example \cite{leoni2}) and Chebyshev inequality to get
\[
\log^p\left(1+\frac{t}{\delta}\right)|\{|v|>t\}\cap B_R(x_0)|\le \int_{B_R(x_0)}\log^p\left(1+\frac{v}{\delta}\right)\le C.
\]
Now, if $|\{|v|>t\}\cap B_R(x_0)|=0$ for all $t>0$, then $v\equiv0$ in $B_R(x_0)$. Otherwise there exists $t^*$ such that $|\{|v|>t^*\}\cap B_R(x_0)|>0$. Hence we take the limit as $\delta\to0$ and obtain a contradiction; then once again $v\equiv0$ in $B_R(x_0)$.
\end{proof}

\section{Existence of weak solutions and simplicity eigenvalues}\label{Sec3}

This section is devoted to prove Theorem \ref{MThm1}. For the sake of simplicity, we are going to drop the dependence of the couple $(u,v)$ on $p$.

Firstly, let us consider the functionals $\mathcal{I},\mathcal{G}:W^{1,p}_0(\Omega)\times W^{1,p}_0(\Omega)\to \mathbb{R}$ given by
$$
   \mathcal{I}(w,z) = \int_{\Omega} \left(|\nabla w(x)|^p+|\nabla z(x)|^p\right)dx +  \int\int_{\mathbb{R}^{2N}} \left(\dfrac{| w(x)-w(y)|^p}{|x-y|^{N+rp}}+\dfrac {|z(x)-z(y)|^p}{|x-y|^{N+sp}}\right)dxdy
$$
and
$$
  \mathcal{G}(u,v)=2\int_\Omega |w(x)|^\alpha |z(x)|^{\beta} dx.
$$
Let us recall that $\mathcal{I}$ is well defined thanks to Lemma \ref{nofrac}. Moreover both $\mathcal{I}$ and $\mathcal{G}$ are of class $C^1$ and their Gateaux derivatives are
\begin{equation}\label{gat1}
   \nabla \mathcal{I}(w,z)(\phi,\psi)=p\big( \E_p(w,\phi)+\E_{p,r}(w,\phi), \E_{p,s}(z,\psi)+\E_p(z,\psi)\big)
\end{equation}
and
\begin{equation}\label{gat2}
   \nabla \mathcal{G}(w,z)(\phi,\psi)= 2\bigg(\alpha\int_\Omega |w(x)|^{\alpha-2}|z(x)|^\beta w(x)\phi(x)dx,\beta\int_\Omega |w(x)|^{\alpha}|z(x)|^{\beta-2} z(x)\psi(x)dx\bigg).
\end{equation}
Of course by definition
\[\mathcal{J}_p(u,v)=\dfrac{\mathcal{I}(u,v)}{\mathcal{G}(u,v)}.\]
Now we are in the position to prove the Theorem \ref{MThm1}.
\begin{proof}[{\bf Proof of Theorem \ref{MThm1}}]
Let us prove that there exists a non trivial couple \((u,v)\in W^{1,p}_0(\Omega)\times W^{1,p}_0(\Omega)\) such that
\[
I(u,v)=\min \left\{ \mathcal{I}(w,z):  (w,z)\in W^{1,p}_0(\Omega)\times W^{1,p}_0(\Omega) \,\,\,\text{with}\,\,\, \mathcal{G}(w,z)=1\right\}.
\]
Indeed, observe that for any $(w,z)\in W^{1,p}_0(\Omega)\times W^{1,p}_0(\Omega)$ with $\mathcal{G}(w,z)=1$
\begin{equation}\label{23-9}
\mathcal{I}(u,v)\ge \|\nabla u\|_{\elle p}^p+\|\nabla v\|_{\elle p}^p \ge \mu>0
\end{equation}
where $\mu$ is the first eigenvalue of the system
\begin{equation*}
\left\{
\begin{array}{rclcl}
-\Delta_p \varphi & = & \frac{2\alpha}{\alpha+\beta}\mu |\varphi|^{\alpha-2}\varphi|\psi|^{\beta} & \mbox{in} & \Omega \\
-\Delta_p \psi& = & \frac{2\beta}{\alpha+\beta}\mu |u|^{\alpha}|v|^{\beta-2}v & \mbox{in} & \Omega \\
  u& =& 0&\text{ on } & \partial \Omega\\
  v& =& 0&\text{ on } & \partial \Omega.
\end{array}
\right.
\end{equation*}
From \eqref{23-9} we deduce that our functional is bounded from below and coercive on the closed subset $\mathcal{G}(u,v)=1$. Let us set
\begin{align*}
\lambda_p &:= \inf \left \{ \mathcal{I}(w,z): (w,z)\in W^{1,p}_0(\Omega)\times W^{1,p}_0(\Omega) \mbox{ where }\mathcal{G}(w,z)=1 \right\}>0.
\end{align*}

Thus every minimizing sequence is bounded, i.e., there exists a universal constant $C>0$, such that given \[\{(w_n,z_n)\}\in W^{1,p}_0(\Omega)\times W^{1,p}_0(\Omega),\] satisfying
\[\mathcal{G}(w_n,z_n)=1 \mbox{ and } \lim_{n\to +\infty} \mathcal{I}(w_n,z_n)=\lambda_p,\]
there holds that
$$
   \|w_n\|^p_{W^{1,p}_0}+\|z_n\|^p_{W^{1,p}_0}\leq C \quad  \forall n\in \mathbb{N}.
$$
Let us consider $(u,v) \in W^{1,p}_0(\Omega)\times W^{1,p}_0(\Omega)$ so that, up to subsequences,
\begin{align*}
w_n &\rightharpoonup u,  z_n \rightharpoonup v \quad \mbox{ in } \quad W^{1,p}_0(\Omega)\\
w_n &\rightarrow u,  z_n \rightarrow v \quad \mbox{ in } \quad L^{q}(\Omega), \quad \forall \, q \in [1, p^*)\\
w_n &\rightarrow u,  z_n \rightarrow v \quad \mbox{ a.e. in } \quad \Omega.
\end{align*}
It is straightforward to check that
\[
   \mathcal{G}(u,v) = 1 \quad \mbox{ and } \quad \lambda_p= \lim_{n\to +\infty} \mathcal{I}(u_n,v_n) \geq \mathcal{I}(u,v),
\]
and hence $\lambda_p=\mathcal{I}(u,v)$.

Moreover, since both $\mathcal{I}$ and $\mathcal{G}$ are of class $C^1$, thanks to the Lagrange Multiplier theorem and the definition of $\lambda_p$, we deduce that
\begin{equation}\label{lagrange}
   \nabla I(u,v)(\phi,\psi) = \dfrac{\lambda_p}{p} \nabla G(u,v)(\phi,\psi), \ \ \ \forall \  (\phi,\psi)\in W^{1,p}_0(\Omega)\times W^{1,p}_0(\Omega),
\end{equation}
that is a weak solution of \eqref{Eq1}.

Now we prove that $u$ and $v$ do not change sing on $\Omega$ and that they cannot be zero on a set of positive measure. It is not restrictive to assume that $u,v>0$ on a subset of $\Omega$ with positive measure ($u,v$ are not trivial and one can change $u$ to $-u$ or $v$ to $-v$). Hence we claim that $u,v>0$ a.e. in $\Omega$. Notice at first that the strict inequality
\[
||a|-|b||<|a-b| \ \ \ \forall  \ ab<0,
\]
implies that $u,v\ge0$ a.e. in $\Omega$. Indeed if not
\[
[|u|]_{p,r}^p<[u]_{p,r}^p \ \ \ \mbox{or} \ \ \ [|v|]_{p,r}^p<[v]_{p,r}^p,
\]
that contradicts the minimality of $(u,v)$. At this point it is enough to use Proposition \ref{smp} to prove that claim.
\end{proof}
Now we show that $\lambda_p$ is simple, namely if $(u,v)$ and $(\tilde u, \tilde v)$ are two pairs of solutions of \eqref{Eq1} with $\lambda=\lambda_p$, then there exists $k\in\mathbb{R}$ such that $(\tilde u, \tilde v)=k(u,\pm v)$.
\begin{proof}[{\bf Proof of Theorem \ref{MThsimp}}]
 We restrict our analysis to the case $u,v>0$ and $k\in(0,\infty)$. Indeed all the other configurations can be recovered changing $u$ to $-u$ or $v$ to $-v$ or $k$ to $-k$.\\ First of all let us prove that any non trivial weak solution $(u,v)\in\w1p\times\w1p$ of \eqref{Eq0} with $\lam=\lam_p$ is a minimizer of the functional \eqref{11-07}. For it, let us take $u$ and $v$ as test functions in the first and second equation respectively. We obtain
\[
\|\nabla u\|_{\elle p}^p+[u]_{r,p}^{p}=\frac{2\alpha}{\alpha+\beta}\lambda_p|u|^{\alpha}|v|^{\beta}
\]
and
\[
\|\nabla v\|_{\elle p}^p+[v]_{r,p}^{p}=\frac{2\beta}{\alpha+\beta}\lambda_p|u|^{\alpha}|v|^{\beta}.
\]
Summing up we recover that
\[
\mathcal{J}_p(u,v)=\lambda_p\le \mathcal{J}_p( z,w) \ \ \ \mbox{with} \ \ \ ( z,w)\in \w1p\times\w1p \ \ \ \mbox{and} \ \ \  z\neq 0\neq  w.
\]
Hence if there exist two couple $(u,v)$ and $(\tilde u,\tilde v)$ of solutions of our system, they are also minimizers of \eqref{11-07}. We claim that there exists a constant $k$ such that $(u,v)=k(\tilde u,\tilde v)$. For it, remember that we have $u,v,\tilde{u},\tilde{v}>0$ and  let us define
\[
\varphi=\left(\frac{u^p+\tilde u^p}{2}\right)^{\frac1p} \ \ \ \mbox{and} \ \ \ \ \psi=\left(\frac{v^p+\tilde v^p}{2}\right)^{\frac1p}.
\]
Thanks to Lemmas \ref{16-8} and \ref{bigin} respectively, we have that
\[
\|\nabla \varphi\|_{\elle p}^p\le \frac{1}{2}(\|\nabla u\|_{\elle p}^p+\|\nabla \tilde u\|_{\elle p}^p)
\]
and
\[
[\varphi]_{p,r}^p\le\frac12([u]_{p,r}^p+[\tilde u]_{p,r}^p).
\]
Since the very same holds true for $\psi$ we deduce that

\begin{equation}\label{17:13}
\lambda_p\le \mathcal{J}_{p}(\varphi,\psi) \le \frac12\frac{\mathcal{I}(u,v)+\mathcal{I}(\tilde u,\tilde v)}{\mathcal{G}(\varphi,\psi)}=\frac{\lambda_p}2\frac{\mathcal{G}(u,v)+\mathcal{G}(\tilde u,\tilde v)}{\mathcal{G}(\varphi,\psi)}.
\end{equation}
Moreover, thanks to Lemma \ref{18-8} we deduce that
\begin{equation}\label{17:13bis}
\begin{array}{rcl}
 \displaystyle \mathcal{G}(\varphi,\psi)=2\io\varphi^\alpha\psi^\beta & = & \displaystyle \io(u^p+\tilde u^p)^{\frac{\alpha}{p}}(v^p+\tilde v^p)^{\frac{\beta}{p}} \\
   & \ge & \displaystyle \io u^\alpha v^\beta+\io \tilde u^\alpha \tilde v^\beta\\
   &  = & \frac{\mathcal{G}(u,v)+\mathcal{G}(\tilde u,\tilde v)}{2}.
\end{array}
\end{equation}
In order to avoid a contradiction, both of the inequalities \eqref{17:13} and \eqref{17:13bis} cannot be strict, i.e.  we need that both the inequalities of Lemmas \ref{16-8} and \ref{18-8} have to be satisfied with the equality sign. By Lemma \ref{16-8}, applied to $u$ and $v$, we deduce that there exist two constants $k_1,k_2>0$ for which $u=k_1\tilde u$ and $v=k_2\tilde v$. On the other hand, Lemma \ref{18-8} guarantees that $(u,v)=k(x)(\tilde u,\tilde v)$, for a function  $k(x)>0$ a.e. in $\Omega$. Hence $k(x)=k_1=k_2$ so that $\lambda_p$ is simple according to our definition.

\end{proof}

\section{Existence and geometric characterization to $\infty-$eigenvalue}
\label{existenceandgeometric}

We are in a position to present the proof of the Theorem \ref{MThm2}.

\begin{proof}[{\bf Proof of Theorem \ref{MThm2}}] We follow some ideas of \cite{BRS}, \cite{DelPR19},  \cite{JLM99} and \cite{LL14} adapted to our case. As we already said in the introduction, thanks to assumption \eqref{pitt} and Theorems \ref{MThm1} and \ref{MThsimp}, for any $n\in\mathbb{N}$ there exists a triple $(\lambda_{n},u_{p_n},v_{p_n})=(\lambda_{n},\un,\vn)$, solution of \eqref{Eq1}, such that $\un,\vn>0$ $a.e.$ in $\Omega$ and
\[
\displaystyle \io|\un|^{\an}|\vn|^{\bn} dx=1.
\]
Set $\displaystyle R \defeq \max_{x \in \Omega} \dist(x, \partial \Omega)$, let $B_{R}$ be a ball with radius $R$ contained in $\Omega$ and let us define
\[
d_{R}(x)=\begin{cases}\frac{1}{R}\mbox{dist}(x,\partial B_{R})   &\mbox{if} \ x\in B_{R}\\
0 &\mbox{if} \ x\in \Omega\setminus B_{R}.
\end{cases}
\]
Let us recall the following properties of this normalized distance function
\[
\|d_{R}\|_{\elle{\infty}}=1, \ \ \ |d_{R}|_{t}=R^{-t}, \ \ \ \mbox{and} \ \ \ \|\nabla d_{R}\|_{\elle{\infty}}=R^{-1}.
\]
Now, if $R\leq 1$, let us take $w=z=d_{R}$ as test function in \eqref{inffunc} in order to obtain
$$
\begin{array}{rcl}
  \displaystyle \limsup_{n\to\infty}  \left(\lambda_n\right)^{\frac1{\pn}}  & \le  & \displaystyle \frac{\max\{\|\nabla d_{R}\|_{\elle{\infty}}, |d_{R}|_{r},|d_{R}|_{s}\}}{\|d_{R}\|_{\elle{\infty}}} \\
   \displaystyle & = & \displaystyle \max\left\{\frac{1}{R},\frac1{R^r},\frac1{R^s}\right\} \\
  \displaystyle  & = & \displaystyle \frac{1}{R}.
\end{array}
$$
On the other hand, if $R>1$, choosing
$$
  w_{R}(x)=R^{(1-\Gamma)(r-s)}d_{R}(x) \quad  \text{and} \quad  z_{R}(x)=R^{-\Gamma(r-s)}d_{R}(x)
$$
as test functions in \eqref{inffunc}, it follows
$$
\begin{array}{rcl}
  \displaystyle \limsup_{n\to\infty} \left(\lambda_n\right)^{\frac1{\pn}} & \leq & \displaystyle \frac{\max\{\|\nabla w_{R}\|_{\elle{\infty}}, |w_{R}|_{r},\|\nabla z_{R}\|_{\elle{\infty}},|z_{R}|_{s}\}}{\|w_{R}^{\Gamma}z_{R}^{1-\Gamma}\|_{\elle{\infty}}} \\
   & = & \displaystyle \max\left\{{R}^{(1-\Gamma)(r-s)-1},{R}^{-r\Gamma-s(1-\Gamma)},{R}^{-\Gamma(r-s)-1},{R}^{-r\Gamma-s(1-\Gamma)}\right\} \\
   & = &\displaystyle  \frac{1}{R^{r\Gamma+s(1-\Gamma)}}.
\end{array}
$$
In conclusion, we have that
\begin{equation}\label{firstl}
\limsup_{n\to\infty} \left(\lambda_n\right)^{\frac1{\pn}} \le\max\left\{\frac{1}{R},\frac1{R^{r\Gamma+s(1-\Gamma)}}\right\}.
\end{equation}
The estimate above, the definition of $\lambda_{n}$ and H\"{o}lder Inequality imply that, for a fixed $ q>N$, there exists $n_q$ such that
\begin{equation}\label{24-07}
\|\nabla\un\|_{\elle{q}}+\|\nabla\vn\|_{\elle{q}}\le |\Omega|^{\frac1q-\frac1{\pn}} C \le C \ \ \ \forall n\ge n_q.
\end{equation}
Hence there exists two weak limits $\ui,\vi\in\w{1}{q}$ and two subsequences always labeled with $\{\un\}$ and $\{\vn\}$ such that
\[
\un\rightharpoonup\ui \ \ \ \mbox{and} \ \ \ \vn\rightharpoonup\vi \ \ \ \mbox{in} \ \ \ \w1q,
\]
and, thanks to the choice of $q$ and the classical Sobolev embedding, also
\[
\un\to\ui \ \ \ \mbox{and} \ \ \ \vn\to\vi \ \ \ \mbox{uniformly in } \Omega.
\]
Notice now that the extracted subsequences $\{\un\}$ and $\{\vn\}$ always satisfy \eqref{24-07}, we deduce that for any $\bar q$ there exists $n_{\bar q}$ such that $\{\un\}_{n>n_{\bar q}}$ and $\{\vn\}_{n>n_{\bar q}}$ are uniformly bounded in $W^{1,\bar q}_0(\Omega)$ with respect to $\bar q$. Since they already uniformly converge to $u_{\infty}$ and $v_{\infty}$, we deduce that
\[
\|\nabla u_{\infty}\|_{\elle{\bar q}}+\|\nabla v_{\infty}\|_{\elle{\bar q}}\le C \ \ \ \forall  \ \bar q>N,
\]
that in turn implies $\ui,\vi\in\w{1}{\infty}$. Moreover, thanks to the normalized condition $\io\un^{\an}\vn^{\bn}=1$, it follows that
\[
\|\ui^{\Gamma}\vi^{1-\Gamma}\|_{\elle q}=\lim_{n\to\infty}\left(\io\un^{\frac{\an q}{\pn}}\vn^{\frac{\bn q}{\pn}} dx\right)^{\frac1q}\le \lim_{n\to\infty}|\Omega|^{\frac{1}{q}-\frac1p}=1,
\]
and that
\[
1=\left(\io\un^{\an}\vn^{\bn} dx\right)^{\frac1\pn}\le \|\un^{\an}\vn^{\bn}\|^{\frac{1}{\pn}}_{\elle{\infty}}|\Omega|^{\frac{1}{q}-\frac1{\pn}}.
\]
Hence, by taking the limit both as $n$ and $q$ diverge to  $\infty$, we deduce
$$
   \|\ui^{\Gamma}\vi^{1-\Gamma}\|_{\elle {\infty}}=1.
$$

In order to show that $(\ui,\vi)$ is a minimizer of the the limit functional $\mathcal{J}_{\infty}$, it is useful to consider the following fractional seminorm
\begin{equation}\label{auxilia}
|f|_{t,p}=\left(\int_{\Omega\times\Omega}\frac{|f(x)-f(y)|^p}{|x-y|^{pt}}dxdy\right)^{\frac1p}  \ \ \ \mbox{with}  \ \ \ t\in(0,1) , \ \ p\in(1,\infty).
\end{equation}
Of course if $f\in C^{0,t}(\bar \Omega)$ then $|f|_{t,p}\to |f|_t=\displaystyle \sup_{(x,y)\in\Omega\times\Omega \atop{x \neq y}}\dys \frac{|f(x)-f(y)|}{|x-y|^{t}}$ as $p\to\infty$. At this point notice that for $q<p_n$
\begin{equation}\label{cip}
  \|\nabla\un\|_{\elle q} \le  |\Omega|^{\frac1q-\frac1{\pn}}\|\nabla\un\|_{\elle{\pn}} \le |\Omega|^{\frac1q-\frac1{\pn}}\left(\lambda_n\right)^{\frac1{\pn}}
\end{equation}
and
\begin{equation}\label{ciop}
\begin{array}{rcl}
  |\un|_{r,q}    & \leq & \displaystyle |\Omega|^{\frac2q-\frac2{\pn}}|\un|_{r,\pn} \\
   & \leq  & |\Omega|^{\frac2q-\frac2{\pn}}\mbox{diam}(\Omega)^{\frac{N}{\pn}}[\un]_{r,\pn} \\
   & \leq & |\Omega|^{\frac2q-\frac2{\pn}}\mbox{diam}(\Omega)^{\frac{N}{\pn}} \left(\lambda_n\right)^{\frac1{\pn}},
\end{array}
\end{equation}
where we have used Holder Inequality and the definition of $|\cdot|_{r,q}$ in \eqref{auxilia}. Since the same estimates hold true for $\{\vn\}$ with $s$ instead of $r$, we get that
{\small{
\begin{multline}\label{cipciop}
\left(\|\nabla\un\|_{\elle q}^q+|\un|_{r,q}^q+\|\nabla\vn\|_{\elle q}^q+|\vn|_{s,q}^q\right)^\frac1q \le4^{\frac1q}
|\Omega|^{\frac1q+\frac1{\pn}}\left(|\Omega|^{1+\frac q{\pn}}+\mbox{diam}(\Omega)^{\frac{Nq}{\pn}}\right)^{\frac1q}\left(\lambda_n\right)^{\frac1{\pn}}.
\end{multline}}}
From this inequality we obtain two important pieces of information. The first one is
\begin{equation}\label{28-6-bis}
  \mathcal{J}_{\infty}(u_{\infty}, v_{\infty}) \le  \displaystyle \limsup_{n\to\infty} \left(\lambda_n\right)^{\frac1{\pn}}
    \le  \max\left\{\frac{1}{R},\frac{1}{R^{r\Gamma+s(1-\Gamma)}}\right\},
\end{equation}
that is obtained passing to the limit in \eqref{cipciop} at first as $n\to\infty$ and then as $q\to\infty$. The second piece of information is that, for any $w,z\in W^{1,\infty}_0(\Omega)$, with $\|w^{\Gamma}z^{1-\Gamma}\|_{\elle{\infty}}=1$, it hold true that
\[
\mathcal{J}_{\infty}(u_{\infty}, v_{\infty}) \le \mathcal{J}_{\infty}(w, z),
\]
that is obtained plugging in \eqref{cipciop} the variational characterization of $\lambda_n$ and passing to the limit in $n$ and $q$ as before. This proves that the functional $\mathcal{J}_{\infty}$ reaches its minimum at $(\ui,\vi)$.

In order to close the circle we need to show that
\begin{equation}\label{ecco}
\max\left\{\frac{1}{R},\frac{1}{R^{r\Gamma+s(1-\Gamma)}}\right\}\le \mathcal{J}_{\infty}(u_{\infty}, v_{\infty}).
\end{equation}
For this end, let $x_0\in\Omega$ such that $u_{\infty}^{\Gamma}(x_0)\vi^{1-\Gamma}(x_0) = 1$  and set $a=u_{\infty}(x_0)$ and $b=\vi(x_0)$ so that
\[
\|\nabla \ui\|_{\elle{\infty}}\ge \frac{a}{d(x_0)}, \ \ \ |\ui|_{r,\infty}\ge \frac{a}{d(x_0)^r},
\]
and
\[
\|\nabla \vi\|_{\elle{\infty}}\ge \frac{b}{d(x_0)}, \ \ \ |\vi|_{s,\infty}\ge \frac{b}{d(x_0)^s}.
\]
where $d(x_0)=\mbox{dist}(x_0,\partial\Omega)$. Hence,
\begin{equation}\label{28-6}
\begin{array}{rcl}
  \mathcal{J}_{\infty}(u_{\infty}, v_{\infty}) & \ge & \mathrm{I}(a,b,d) \\
   & \defeq & \displaystyle \inf\left\{\max\left\{\frac{a}{d},\frac{a}{d^r},\frac{b}{d},\frac{b}{d^s}\right\}: \ a,b\in(0,\infty),\ a^{\Gamma}b^{1-\Gamma}=1, \ d\in(0,R]\right\}.
\end{array}
\end{equation}
To evaluate the infimum in the right hand side above let us consider at first $R\le1$ (recall that $R$ is the radius of the largest ball contained in $\Omega$). In this case $d\le\min\{d^r,d^s\}$ for $d\in(0, R]$. Thus,
$$
\begin{array}{rcl}
  \mathrm{I}(a,b,d) & = & \displaystyle \inf\left\{\frac{1}{d}\max\left\{a,b\right\}: \ a,b\in(0,\infty),\ a^{\Gamma}b^{1-\Gamma}=1, \ d\in(0, R]\right\} \\
   & = &\displaystyle \frac{1}{R}\displaystyle \inf\big\{\max\left\{a,b\right\}: \ a,b\in(0,\infty),\ a^{\Gamma}b^{1-\Gamma}=1\big\}\\
& = &\displaystyle \frac{1}{R} \ \ \ \mbox{if } R\le1,
\end{array}
$$
where the last equation comes from the fact that the \emph{inf} of the \emph{max} between $a$ and $b$ is reached for $a\equiv b$, that implies $1=a^{\Gamma}b^{1-\Gamma}=b^{\Gamma}b^{1-\Gamma}=b=a$.

On the other hand, if $R>1$ the infimum in \eqref{28-6} has to be achieved for values of $d$ bigger then $1$, hence
$$
\begin{array}{ccl}
  \mathrm{I}(a,b,d) & = & \displaystyle \inf\left\{\max\left\{\frac{a}{d^r},\frac{a^{-\frac{\Gamma}{1-\Gamma}}}{d^{s}}\right\}: \ a\in(0,\infty), \ d\in(0, R]\right\} \\
   & = & \displaystyle \inf\left\{\max\left\{\theta,\frac{\theta^{-\frac{\Gamma}{1-\Gamma}}}{d^{s+r\frac{\Gamma}{1-\Gamma}}}\right\}: \ \theta\in(0,\infty), \ d\in(0, R]\right\} \ \ \ \mbox{if} \ \ \ R>1.
\end{array}
$$
As before the infimum above is reached if the two argument of the maximum are equal, namely
\[
\theta=\frac{\theta^{-\frac{\Gamma}{1-\Gamma}}}{d^{s+r\frac{\Gamma}{1-\Gamma}}}\iff \theta=\frac1{d^{r\Gamma+s(1-\Gamma)}}.
\]
Thus, we got
\[
\mathrm{I}(a,b,d)=\inf\left\{\frac1{d^{r\Gamma+s(1-\Gamma)}} \ : \ d\in(0,R]\right\}=\frac{1}{R^{r\Gamma+s(1-\Gamma)}}  \ \ \ \mbox{if } R>1,
\]
and thus inequality \eqref{ecco} holds. Finally, putting together \eqref{28-6-bis} and \eqref{ecco} we conclude the proof of the Theorem.
\end{proof}

\section{Limiting PDE system in the viscosity sense}\label{Sec5}

Finally, we supply the proof of the Theorem \ref{MThm4}.

\begin{proof}[{\bf Proof of Theorem \ref{MThm4}}]

For the sake of brevity we prove only that the pair $(u_\infty,v_\infty)$ is a viscosity solution to the first equation of system \eqref{16-10bis}, since the other case is absolutely analogous. We begin by proving that the pair $(u_\infty,v_\infty)$ is a viscosity subsolution. Thus, by Definition \ref{DefVSlimeq} it is enough to prove that given $x_0\in\Omega$, for every $\phi$, $\psi \in C^2(\Omega)\cap C^1_0(\Omega)$ such that for $r>0$  sufficiently small
\begin{equation*}
\displaystyle
\left\{
\begin{array}{rclcl}
u_{\infty}(x)-\phi(x)<u_{\infty}(x_0)-\phi(x_0) & = &  0  \ \forall x \in B_r(x_0), \\
 v_{\infty}(x)-\psi(x)<v_{\infty}(x_0)-\psi(x_0) & = &  0  \ \forall x \in B_r(x_0),
 \end{array}
\right.
\end{equation*}
we have
$$
   \mathrm{G}^{r}_1[\phi(x_0), v_\infty(x_0)] \leq 0 \qquad \text{and} \qquad \mathrm{G}^r_2[\phi(x_0), v_\infty(x_0)]\leq 0.
$$

Now, consider $\xi \in C^\infty_0(\overline{\Omega})$ for which $\xi \equiv 1 $ in $B_r(x_0)$ and let $x_n \in B_{r_n}(x_0)$ be the maximum point of $u_n-\phi$ in $B_{r_n}(x_0)$, where $r_n=\frac{r}{n}$.
In this way, we define the test function
\begin{equation}
\label{1719}
\phi_n(x)=\phi(x)+\xi(x)\left(k_n+\frac{|x-x_n|^2}{n}\right),
\end{equation} where $k_n=u_n(x_n)-\phi(x_n)$.

Since $u_n \to u$ uniformly in $\Omega$ and $x_n \to x_0$, it is clear that $u_n-\phi_n$  attains a strict local maximum at $x_n$, $u_n(x_n)=\phi_n(x_n)$, and $\phi_n \to \phi$ uniformly in $\Omega$.

At this point, we are going to use Lemma \ref{EquivSols}. Indeed, since $(u_{n}, v_{n})$ is in particular a weak subsolution  of \eqref{Eq1}, by  Lemma \ref{EquivSols}, it is also a viscosity subsolution to the first equation of \eqref{Eq1}, so that
\begin{equation}\label{EqSubsolu_p}
  -\Delta_{p_n} \phi_n(x_{n}) +(-\Delta)^r_{p_n}\phi_n(x_{n})  \leq \lambda_{n}\phi_n^{\an-1}(x_{n})v_n^{\bn}(x_{n}) \quad \forall \,\,n \in \mathbb{N},
\end{equation}
and then
$$
   -|\Delta_{p_n} \phi_n(x_{n}) | + (\L_{p_n, r}^+)^{p_n-1}\phi_n(x_{n}) \leq    A_n + (\L_{p_n, r}^-)^{p_n-1}\phi_n(x_{n}),
$$
where $\mathcal{L}^\pm_{p_n, r}$ were defined in Lemma \ref{Lemlim-op}, and for the sake of simplicity, we denoted
\[
A_n=\frac{2\an}{\an+\bn}\lambda_{n}u_n^{\an-1}(x_{n})v_n^{\bn}(x_{n}),
\]
since $\phi_n(x_n)=u_n(x_n)$.

Moreover, by \eqref{1719}, remark that
\begin{equation}\label{1725}
\displaystyle
\left\{
\begin{array}{rclcl}
\nabla \phi_n(x_n)& = & \nabla\phi(x_n)+\nabla \xi(x_n) k_n & = &  \nabla\phi(x_n), \\
 D^2\phi_n(x_n)& = & D^2\phi(x_n)+D^2\xi(x_n) k_n + \frac{2}{n}I_N& = &  D^2\phi(x_n)+ \frac{2}{n}I_N,\\
 \Delta \phi_n(x_n)& = & \Delta\phi(x_n)+\Delta \xi(x_n) k_n+\frac{2N}{n}\xi(x_n)& = &  \Delta\phi(x_n)+ \frac{2N}{n},\\
 \Delta_\infty \phi_n(x_n)& = & \Delta_\infty\phi(x_n)+\frac{2}{n}|\nabla \phi (x_n)|^2.&  &
\end{array}
\right.
\end{equation}
since $\xi\equiv 1$ in $B_r(x_0)$ and $\{x_n\} \subset B_{r_n}(x_0).$

Thus, by combining \eqref{1725} and the latter inequality we get
$$
   \L_{p_n, r}^+\phi_n(x_{n}) \leq \left(A_n + (\L_{p_n, r}^-)^{p_n-1}\phi_n(x_{n}) + |\nabla \phi(x_{n})|^{p_n-4}|B_n|\right)^{\frac{1}{\pn-1}},
$$
where we considered
$$
   B_n = (p_n-2)\left( \frac{|\nabla \phi(x_{n})|^{2}\Delta \phi(x_{n})}{p_{n}-2}+ \Delta_{\infty} \phi(x_{n}) \right)+ |\nabla \phi(x_{n})|^{2}\frac{2(N+p_n-2)}{n},
$$

Hence, by the choice of $\phi_n$,  by Lemma \ref{Lemlim-op}, if we let $n\to\infty$, it is straightforward to see that
\begin{equation}
\label{1621}
  \L^{+}_{\infty, r}\phi(x_0) \leq \max\{\lambda_\infty\phi^{\Gamma}(x_0)v_\infty^{1-\Gamma}(x_0),- \L^{-}_{\infty, r}\phi(x_0), |\nabla \phi(x_0)|\},
\end{equation}
where we employed Theorem \ref{MThm2}, \eqref{pitt}, \eqref{l.infty.pm}, \eqref{1725}, and that $v_n(x_n)\to v_\infty(x_0)=\psi(x_0)$.

Now, it is clear that \eqref{1621} is equivalent to
$$
  \min\left\{\L^{+}_{\infty, r}\phi(x_0)-\lambda_\infty\phi^{\Gamma}(x_0)v_\infty^{1-\Gamma}(x_0), \L_{\infty, r}\phi(x_0), \L^{+}_{\infty, r}\phi(x_0)-|\nabla \phi(x_0)|\right\}\leq 0,
$$
so that by the definition of $\mathrm{G}^r_1$ we have
\begin{equation}\label{EqSubsolF_1}
\mathrm{G}^r_1[\phi(x_0), v_\infty(x_0)]\leq 0.
\end{equation}

Notice that it remains to prove that
\begin{equation}\label{EqSubsolF_2}
\mathrm{G}^r_2[\phi(x_0), v_\infty(x_0)]\leq 0.
\end{equation}

If $|\nabla \phi(x_0)| = 0$, since $u_\infty$, $v_\infty$ are non negative and all the other terms which appear in  $\mathrm{G}^r_2$, are null or non positive,   \eqref{EqSubsolF_2} is true.

Now, in the sequel, let us stress that in order to prove \eqref{EqSubsolF_2}, it is enough to show that at least one of its terms is non positive.
Thus, if $|\nabla \phi(x_0)| > 0$, one may assume that
{\small{
\begin{equation}\label{EqSubsolF_2assum}
  \min\left\{|\nabla \phi(x_0)| -\lambda_\infty\phi^{\Gamma}(x_0)v_\infty^{1-\Gamma}(x_0), |\nabla \phi(x_0)| - \max\left\{\mathcal{L}_{\infty, r}^{+} \phi(x_0), -\mathcal{L}_{\infty, r}^{-} \phi(x_0)\right\}\right\}>0,
\end{equation}}}
and thence, it is sufficient to prove that $-\Delta_{\infty} \phi(x_0) \leq 0$.

In this fashion, under these assumptions, by dividing inequality \eqref{EqSubsolu_p} by $(p_n-2)|\nabla \phi_n(x_n)|^{p_n-4}$ we arrive at
$$
\begin{array}{cc}
   -\frac{|\nabla \phi(x_{n})|^{2}}{p_n-2}\Delta\phi_n(x_{n}) - \Delta_{\infty}\phi_n(x_{n})+\frac{|\nabla \phi(x_{n})|^3}{p_n-2}\left[\left(\frac{\mathcal{L}_{p_n, r}^{+} \phi_n(x_{n})}{|\nabla \phi_n(x_{n})|}\right)^{p_n-1}-\left(\frac{\mathcal{L}_{p_n, r}^{-} \phi_n(x_{n})}{|\nabla \phi_n(x_{n})|}\right)^{p_n-1}\right]  & \\
    \leq\frac{1}{p_n-2}\left(\frac{ A_n^{\frac{1}{\pn-4}}}{|\nabla \phi(x_{n})|}\right)^{p_n-4},&
\end{array}
$$
since $|\nabla \phi_n(x_{n})|= |\nabla \phi(x_{n})|\neq 0$, for $n$ large enough.

Hence, by passing to the limit in the latter inequality and by combining \eqref{1725}, Lemma \ref{Lemlim-op} and \eqref{EqSubsolF_2assum} we finally obtain
$$
   -\Delta_{\infty} \phi(x_0) \leq 0,
$$
so that \eqref{EqSubsolF_2} holds true.

In this manner,  by combining \eqref{EqSubsolF_1} and \eqref{EqSubsolF_2} we get that $(u_\infty,v_\infty)$ is a viscosity subsolution to the first equation of \eqref{EqLim}, the other proof being analogous.

Now, in order to complete this proof, we want to show that, given $x_0 \in \Omega$ and test functions $\phi, \psi \in C^2(\Omega)\cap C^1_0(\Omega)$ such that for $r>0$ sufficiently small
\begin{equation*}
\displaystyle
\left\{
\begin{array}{rclcl}
u_{\infty}(x)-\phi(x)>u_{\infty}(x_0)-\phi(x_0) & = &  0  \ \forall x \in B_r(x_0), \\
 v_{\infty}(x)-\psi(x)>v_{\infty}(x_0)-\psi(x_0) & = &  0  \ \forall x \in B_r(x_0),
 \end{array}
\right.
\end{equation*}
 there holds that
$$
   \mathrm{G}^r_1[\phi(x_0), v_\infty(x_0)] \geq 0 \qquad \text{or} \qquad \mathrm{G}^r_2[\phi(x_0), v_\infty(x_0)]\geq 0,
$$
meaning that $(u_{\infty}, v_{\infty})$ is a viscosity supersolution of the first equation of \eqref{EqLim}. Once again, we stress that the argument for the second equation of the system is similar.

In an analogous manner to the case of subsolutions, we can find $\phi_n \in C^2(\Omega)\cap C^1_0(\Omega)$ and $\{x_{n}\}_{n \in \mathbb{N}} \subset B_{r_n}(x_0)$ such that:
\begin{enumerate}
  \item $x_{n} \to x_0$ as $n \to \infty$;
  \item $\phi_n \to \phi$ uniformly in $\Omega$ as $n \to \infty$;
  \item $u_{n}(x)-\phi(x)>u_{n}(x_0)-\phi(x_0)$ for all  $x \in B_{r_n}(x_0)$.
\end{enumerate}
Since $(u_{n}, v_{n})$ is a weak supersolution of \eqref{Eq1}, as a consequence of the Lemma \ref{EquivSols}, it is also a viscosity supersolution to the first equation of \eqref{Eq1}. Thus, by using the same notation as in the previous case,
\begin{equation}\label{EqSupersolu_p}
    -\Delta_{p_n} \phi_n(x_{n}) +(-\Delta)^r_{p_n}\phi_n(x_{n})  \geq A_n \quad \forall \,\,n \in \mathbb{N}.
\end{equation}
In this case,
$$
   -\Delta_{p_n} \phi_n(x_{n})  + (\L_{p_n, r}^+)^{p_n-1}\phi_n(x_{n}) \geq  A_n + (\L_{p_n, r}^-)^{p_n-1}\phi_n(x_{n}),
$$
implying that
$$
   |\nabla \phi_n(x_{n})|^{p_n-4}|B_n| + (\L_{p_n, r}^+)^{p_n-1}\phi_n(x_{n}) \geq A_n + (\L_{p_n, r}^-)^{p_n-1}\phi_n(x_{n}).
$$
Thus, it is clear that
$$
   \left(|\nabla \phi_n(x_{n})|^{p_n-4}|B_n| +  (\L_{p_n, r}^+)^{p_n-1}\phi_n(x_{n})\right)^{\frac{1}{p_n-1}} \geq    \left(A_n + (\L_{p_n, r}^-)^{p_n-1}\phi_n(x_{n})\right)^{\frac{1}{p_n-1}}.
$$
Now, by passing to the limit and by combining Lemma \ref{Lemlim-op}, Theorem \ref{MThm2}, sentences \eqref{pitt}, \eqref{l.infty.pm} and that $v_n(x_n)\to v_\infty(x_0)=\psi(x_0)$, one obtains that
$$
   \max\{|\nabla \phi(x_{0})|,  \L^{+}_{\infty, r}\phi(x_0)\} \geq \max\{\lambda_\infty\phi^{\Gamma}(x_0)v_\infty^{1-\Gamma}(x_0), -\L^{-}_{\infty, r}\phi(x_0)\}.
$$
At this point, we shall analyze separately the cases where
$$
   \L^{+}_{\infty, r}\phi(x_0) \geq |\nabla \phi(x_{0})| \quad \text{and} \quad  \L^{+}_{\infty, r}\phi(x_0) < |\nabla \phi(x_{0})|.
$$

First,  let us assume that $ \L^{+}_{\infty, r}\phi(x_0) \geq |\nabla \phi(x_{0})|$. In this case, it is clear that
  $$
     \L^{+}_{\infty, r}\phi(x_0) \geq \max\left\{\lambda_\infty\phi^{\Gamma}(x_0)v_\infty^{1-\Gamma}(x_0), -\L^{-}_{\infty, r}\phi(x_0)\right\},
  $$
  which is equivalent to
  $$
    \min\left\{\L^{+}_{\infty, r}\phi(x_0)-\lambda_\infty\phi^{\Gamma}(x_0)\psi^{1-\Gamma}(x_0), \L_{\infty, r}\phi(x_0)\right\}\geq 0.
$$
The previous inequality and the definition of  that $\mathrm{G}^r_1$, guarantee that
$$
   \mathrm{G}^r_1[\phi(x_0), \psi(x_0)] \geq 0.
$$

  On the other hand, if $ \L^{+}_{\infty, r}\phi(x_0) < |\nabla \phi(x_{0})|$, we must have $|\nabla \phi(x_{0})| \neq 0$, otherwise we should obtain
$$
   0\leq \L^{+}_{\infty, r}\phi(x_0)<0,
$$
clearly a contradiction. Thus, we obtain
$$
   |\nabla \phi(x_{0})| \geq \max\left\{\lambda_\infty\phi^{\Gamma}(x_0)v_\infty^{1-\Gamma}(x_0), -\L^{-}_{\infty, r}\phi(x_0)\right\},
$$
or equivalently,
\begin{equation}\label{EqlimSupersol}
  \min\left\{|\nabla \phi(x_{0})|-\lambda_\infty\phi^{\Gamma}(x_0)v_\infty^{1-\Gamma}(x_0), |\nabla \phi(x_{0})|+\L^-_{\infty, r}\phi(x_0)\right\}\geq 0.
\end{equation}

Now, let us stress that under the assumption $|\nabla \phi(x_0)| > 0$, it is clear that, for $n$ sufficiently large,  \eqref{EqSupersolu_p} can be rewritten as
$$
\begin{array}{cc}
   -\frac{|\nabla \phi(x_{n})|^{2}}{p_n-2}\Delta\phi_n(x_{n}) - \Delta_{\infty}\phi_n(x_{n})+\frac{|\nabla \phi(x_{n})|^3}{p_n-2}\left[\left(\frac{\mathcal{L}_{p_n, r}^{+} \phi_n(x_{n})}{|\nabla \phi(x_{n})|}\right)^{p_n-1}-\left(\frac{\mathcal{L}_{p_n, r}^{-} \phi_n(x_{n})}{|\nabla \phi(x_{n})|}\right)^{p_n-1}\right] &  \\
    \geq \frac{1}{p_n-2}\left(\frac{ A_n^{\frac{1}{\pn-4}}}{|\nabla \phi(x_{n})|}\right)^{p_n-4} ,&
\end{array}
$$
since $|\nabla \phi(x_{n})| \neq 0$ for $n$ large enough.

Therefore, after passing the limit and using Lemma \ref{Lemlim-op}, \eqref{1725}, the choice of $\phi_n$, and inequality \eqref{EqlimSupersol} we get
\begin{equation}\label{EqlimsupersolInfLap}
  -\Delta_{\infty} \phi(x_0) \geq 0.
\end{equation}

Finally, by combining \eqref{EqlimSupersol} and \eqref{EqlimsupersolInfLap} we conclude that
$$
   \mathrm{G}^r_2[\phi(x_0),v_\infty(x_0)]\geq 0.
$$

In both cases, $(u_{\infty},v_\infty)$ is a viscosity supersolution, thereby finishing the proof of the theorem.

\end{proof}

\section{Appendix}\label{Appendix}
In this final section we prove a very technical inequality, which we could not find in the specialized literature. Such a key tool has a pivotal role in the proof of the simplicity of the eigenvalue (Theorem \ref{MThsimp}), see Lemma \ref{bigin}.
\begin{lemma}
\label{lemma1ap} Consider  $f:[0,1]^3 \to \mathbb{R}$ given by
\[ f(x,y,z)=\left|(x^p+y^p)^{\frac{1}{p}}-(z^p+1)^{\frac{1}{p}}\right|-\left((1-x)^p+|y-z|^p\right)^{\frac{1}{p}},\]
where $p\geq 1$. Then $f(x,y,z)\leq 0$ for all $(x,y,z) \in [0,1]$.
\end{lemma}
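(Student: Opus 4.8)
The plan is to read $f$ as the reverse triangle inequality for the $\ell^p$-norm on $\R^2$, evaluated at two particular points, and to deduce it from Minkowski's inequality (which requires exactly the hypothesis $p\ge 1$ that is in force). For $v=(v_1,v_2)\in\R^2$ write $N(v)\defeq(|v_1|^p+|v_2|^p)^{1/p}$, so that $N$ is a norm and in particular $N(v+w)\le N(v)+N(w)$ for all $v,w\in\R^2$. Put $a=(x,y)$ and $b=(1,z)$. Since $x,y,z\in[0,1]$ are nonnegative, $N(a)=(x^p+y^p)^{1/p}$ and $N(b)=(z^p+1)^{1/p}$, while $N(a-b)=(|x-1|^p+|y-z|^p)^{1/p}=((1-x)^p+|y-z|^p)^{1/p}$, the last equality because $1-x\ge 0$ on the cube. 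Hence $f(x,y,z)=|N(a)-N(b)|-N(a-b)$, and the claim $f\le 0$ is precisely $|N(a)-N(b)|\le N(a-b)$.

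To obtain this, I would use subadditivity of $N$ twice: writing $a=b+(a-b)$ gives $N(a)\le N(b)+N(a-b)$, i.e. $N(a)-N(b)\le N(a-b)$; exchanging the roles of $a$ and $b$ and using $N(b-a)=N(a-b)$ gives $N(b)-N(a)\le N(a-b)$. Combining the two inequalities yields $|N(a)-N(b)|\le N(a-b)$, which is the desired conclusion $f(x,y,z)\le 0$ for every $(x,y,z)\in[0,1]^3$.

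There is no genuine obstacle here; the only points demanding a moment's care are that the appearance of $|y-z|$ rather than $y-z$ is harmless because $N$ depends only on the absolute values of the coordinates, that the restriction to $[0,1]^3$ is what allows writing $(1-x)^p$ with no absolute value, and that Minkowski's inequality (equivalently, the triangle inequality for $N$) is valid only for $p\ge 1$. If one prefers to keep the argument self-contained, the two-variable Minkowski inequality can be proved directly by applying H\"older's inequality to the splitting $(|v_i|+|w_i|)^p=|v_i|(|v_i|+|w_i|)^{p-1}+|w_i|(|v_i|+|w_i|)^{p-1}$ and summing over $i\in\{1,2\}$, but this is entirely standard and I would simply cite it.
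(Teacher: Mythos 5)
Your argument is correct: with $N(v)=(|v_1|^p+|v_2|^p)^{1/p}$, $a=(x,y)$, $b=(1,z)$, the quantity $f(x,y,z)$ is exactly $|N(a)-N(b)|-N(a-b)$, and the reverse triangle inequality for the $\ell^p$-norm on $\mathbb{R}^2$ (valid precisely for $p\ge 1$, by Minkowski) gives $f\le 0$; the sign restrictions $0\le x\le 1$, $y,z\ge 0$ are only needed to drop the absolute values, as you note. This is, however, a genuinely different route from the paper's: the paper proves the lemma by an elementary but lengthy five-step case analysis, checking the faces of the cube $[0,1]^3$, the surfaces $y=z$ and $x^p+y^p=z^p+1$ where $f$ fails to be $C^1$, and the interior critical points via sign studies of partial derivatives. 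Your identification via the norm $N$ is shorter, avoids all calculus, and in fact directly establishes the more general Lemma~\ref{bigin} (for arbitrary nonnegative $x,y,w,z$) without the homogeneity/normalization step that the paper uses to reduce to the unit cube; what the paper's approach buys is only self-containedness at the level of one-variable monotonicity arguments, at the cost of considerable length. If you cite Minkowski (or include the standard H\"older-based proof you sketch), your argument is complete.
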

\begin{proof}
Remark that it is enough to prove that $f(x,y,z)\leq 0$ on the boundary of $[0,1]^3$, in the interior points where $f$ fails to be of class $C^1$ or at its critical points.
For the sake of clarity we will split the proof in five steps:\\

{\bf Step 1:} $f(x,y,z)\leq 0$ if $y=z$ or $x^p+y^p=z^p+1$ and $(x,y,z)\in(0,1)^3$.

For the case $y=z$  consider $\phi_1(x,y)= f(x,y,y)=\phi_1(x,y)=(y^p+1)^{\frac{1}{p}}-(x^p+y^p)^{\frac{1}{p}}-1+x.$
We have that
\[\dfrac{\partial \phi_1}{\partial y}=y^{p-1}\left[\dfrac{1}{(y^p+1)^{\frac{p-1}{p}}}-\dfrac{1}{(x^p+y^p)^{\frac{p-1}{p}}}\right]< 0.\]
Since $\phi(x,0)= 0$ for $x,\in[0,1]$, the latter inequalities implies
\[\phi_1(x,y)< \phi(x,0)=0 \ \ \ \mbox{for} \ \ \ (x,y)\in [0,1]^2.\]

In the case $x^p+y^p=z^p+1$, $f(x,y,z)$ is trivially non-positive.\\

{\bf Step 2:} Consider $(x,y,z)\in (0,1)^3$ such that $y\neq z$ and $x^p+y^p \neq z^p+1$.

If $x^p+y^p>z^p+1$ it follows that
\[\dfrac{\partial f}{\partial x}= \dfrac{x^{p-1}}{\left(x^p+y^p\right)^{\frac{p-1}{p}}}+\dfrac{(1-x)^{p-1}}{\left((1-x)^p+|y-z|^p\right)^{\frac{p-1}{p}}}>0.\]
If $x^p+y^p<z^p+1$ and $y>z$ it follows that
\[\dfrac{\partial f}{\partial z}= \dfrac{z^{p-1}}{\left(z^p+1\right)^{\frac{p-1}{p}}}+\dfrac{(y-z)^{p-1}}{\left((1-x)^p+(y-z)^p\right)^{\frac{p-1}{p}}}>0.\]
So there are no critical points on these regions. Finally, if $x^p+y^p<z^p+1$ and $y<z$, it is easy to check that
$\frac{\partial f}{\partial z}=0$ if and only if $y=xz$, and $f(x,xz,y)=0$.\\

{\bf Step 3:} If $z=1$ or $z=0$ then $f(x,y,z)\leq 0$ for all $(x,y)\in [0,1]^2$.

First we consider the case $z=1$, and for this let us set $\phi_2(x,y)=f(x,y,1)=2^{\frac1p}-(x^p+y^p)^{\frac1p}-((1-x)^p+(1-y)^p)^{\frac{1}{p}}$. Studying the sing of $\frac{\partial\phi_2}{\partial x}$ (where it is defined) it follows that, for any $y\in[0,1]$ the function $x\to\phi_2(x,y)$ reaches its maximum at $x=y$ and $\phi_2(y,y)=0$.\\
In order to address the case $z=0$, let us set \[\phi_3(x,y)=f(x,y,0)=\left | (x^p+y^p)^{\frac{1}{p}}-1\right |-\left( (1-x)^p+y^p\right)^{\frac{1}{p}} \mbox{ for } (x,y)\in[0,1]^2.\]
Clearly, if $x^p+y^p=1$, then $\phi_3(x,y)<0$.

Moreover, if $x^p+y^p>1$ then it is cleat that
\[\dfrac{\partial \phi_3}{\partial x} = \dfrac{x^{p-1}}{\left (x^p+y^p\right)^{\frac{p-1}{p}}}+\dfrac{(1-x)^{p-1}}{\left ( (1-x)^p+y^p \right)^{\frac{p-1}{p}}}>0.\]
Thus we have that \[\phi_3(x,y)\leq \phi_3(1,y)\leq 0,\] since $(1+y^p)^{\frac{1}{p}}\leq (1+y)$. On the other hand if $x^p+y^p<1$ one has that
\[\dfrac{\partial \phi_3}{\partial y} = \dfrac{-y^{p-1}}{\left (x^p+y^p\right)^{\frac{p-1}{p}}}-y^p\dfrac{(1-x)^{p-1}}{\left ( (1-x)^p+y^p \right)^{\frac{p-1}{p}}}<0,\]
that in turn implies \[\phi_3(x,y)\leq \phi_3(x,0)=0,\]
completing the proof of Step 3.\\

{\bf Step 4:} If $y=1$ or $y=0$ then $f(x,y,z)\leq 0$ for all $(x,z)\in [0,1]^2$.

Indeed, for $y=1$, consider $\phi_4(x,z)=f(x,1,z)$, i.e.,
\[\phi_4(x,z)=\left | (x^p+1)^{\frac{1}{p}}-(z^p+1)^{\frac{1}{p}}\right |- \left ((1-x)^p+(1-z)^p\right)^{\frac{1}{p}}.\]
Thanks to the symmetry, it is not restrictive assume that $x>z$. We have that,
\[\dfrac{\partial \phi_4}{\partial x}=\dfrac{x^{p-1}}{(x^p+1)^{\frac{p-1}{p}}}+\dfrac{(1-x)^{p-1}}{\left ((1-x)^p+(1-z)^p\right)^{\frac{p-1}{p}}}>0\]
for all $(x,z)\in (0,1)^2$ such that $x> z$. Thus we deduce that
\[\phi_4(x,z)\leq \phi_4(1,z)\leq \phi_4(1,1)=0,\]
where the last inequality follows from
\[\dfrac{\partial \phi_4}{\partial z}(1,z)=\dfrac{-z^{p-1}}{(z^p+1)^{\frac{p-1}{p}}}+1>0, \ \forall z\in [0,1].\]

Now, we analyse the case $y=0$ and for this let us define

\[\phi_5(x,z)=(z^p+1)^{\frac{1}{p}}-x- \left ((1-x)^p+z^p\right)^{\frac{1}{p}},\]
 where $(x,z)\in [0,1]^2$ and which clearly satisfies $\phi_5(x,z)=f(x,0,z)$.
Observe that one has
\[\dfrac{\partial \phi_5}{\partial z}=\dfrac{z^{p-1}}{\left(1+z^p\right)^{\frac{p-1}{p}}}-\dfrac{z^{p-1}}{\left((1-x)^p+z^p\right)^{\frac{p-1}{p}}}<0\]
and consequently, by the continuity of $\phi_5$ we get
\[\phi_5(x,z)\leq \phi_5(x,0)=0 \ \ \ \mbox{in} \ \ \ [0,1]^2, \]
what finishes the proof of Step 4.\\

{\bf Step 5:} If $x=1$ or $x=0$ then $f(x,y,z)\leq 0$ for all $(y,z)\in [0,1]^2$.

For $x=1$ we define
\[\phi_6(y,z)=f(1,y,z)=\left | (1+y^p)^{\frac{1}{p}}-(z^p+1)^{\frac{1}{p}}\right | -|y-z|, \mbox{ for } (y,z)\in[0,1]^2.\]
Since $\phi_6(y,z)=\phi_6(z,y)$ if we prove that $\phi_6(y,z)\leq 0$ for $y\geq z$. With this considerations in mind, observe that
\[ \dfrac{\partial \phi_6}{\partial y}= \dfrac{y^{p-1}}{\left ( 1+y^p\right )^{\frac{p-1}{p}}}-1 <0,\]
so that $\phi_6(y,z)\leq \phi_6(z,z)=0$ as we desired.\\
Finally, for $x=0$, we define
\[\phi_7(y,z)=f(0,y,z)=(z^p+1)^{\frac{1}{p}}-y -(1+|y-z|^p)^{\frac{1}{p}}, \mbox{ where } (y,z)\in [0,1].\]
Hence, if $y\geq z$, since $(y^p+1)^{\frac{1}{p}}\leq y+1$, we arrive at
\begin{align*}
\phi_7(y,z)&\leq  (y^p+1)^{\frac{1}{p}}-y -(1+(y-z)^p)^{\frac{1}{p}} \\
&\leq  1 -(1+(y-z)^p)^{\frac{1}{p}} \leq 0.
\end{align*}
Now, if $y<z$, remark that
\[ \dfrac{\partial \phi_7}{\partial y}=-1+\dfrac{(z-y)^{p-1}}{\left (1+(z-y)^p\right)^{\frac{p-1}{p}}} <0.\]
Hence, by the continuity of $\phi_7$, given $(y,z)\in[0,1]^2$ for which $y\leq z$, there holds that $\phi_7(y,z)\leq \phi_7(0,z)=0$.
In this way, we also have that $f(0,y,z)\leq 0 \ \forall (y,z)\in [0,1]^2$, completing the proof of Step 5.\\

Therefore, we have proved that $f(x,y,z)\le 0$ both on $\partial[0,1]^3$ and in $[0,1]^3$.
\end{proof}

In conclusion, for the convenience of the reader, we provide a proof to a result which plays a key role in the text, namely Lemma \ref{Lemlim-op}.

\begin{lemma}
\label{1501}
Let $\Omega \subset \mathbb{R}^N$ be bounded, $\{f_n\}\subset L^\infty(\mathbb{R}^N)$ and $\{p_n\}\subset (1,\infty)$.
Suppose that:
\begin{itemize}
\item[(a)] $p_n\leq p_{n+1} \ \forall n \in \mathbb{N}$ and $p_n \to \infty$ if $n\to \infty$;
\item[(b)] There exists $f\in L^\infty(\Omega)$ for which $f_n \to f$ in $L^\infty(\Omega)$ if $n\to \infty$;
\item[(c)] There exists $C\geq 0$ such that
\[\lim_{n\to \infty} \| f_n\|_{L^{p_n}(\mathbb{R}^N\setminus \Omega)}=C.\]
\end{itemize}
Then,
\[\lim_{n\to \infty} \| f_n\|_{L^{p_n}(\mathbb{R}^N)}=\max \{\|f\|_{L^\infty(\Omega)},C\}.\]
\end{lemma}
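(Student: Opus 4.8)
The plan is to decompose the full-space $L^{p_n}$ norm into its pieces over $\Omega$ and over $\mathbb{R}^N\setminus\Omega$, estimate each one separately, and then recombine them by an elementary inequality. Writing $a_n := \|f_n\|_{L^{p_n}(\Omega)}$ and $b_n := \|f_n\|_{L^{p_n}(\mathbb{R}^N\setminus\Omega)}$, one has $\|f_n\|_{L^{p_n}(\mathbb{R}^N)}^{p_n} = a_n^{p_n} + b_n^{p_n}$. The elementary bound
\[
\max\{a,b\}\;\le\;\bigl(a^p+b^p\bigr)^{1/p}\;\le\;2^{1/p}\max\{a,b\},\qquad a,b\ge 0,\ p\ge 1,
\]
applied with $p=p_n$, together with $2^{1/p_n}\to 1$ and the continuity of $(x,y)\mapsto\max\{x,y\}$, reduces the whole statement to proving the two one-sided limits $a_n\to\|f\|_{L^\infty(\Omega)}$ and $b_n\to C$. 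The second is exactly hypothesis (c), so the only real work is the first.

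To prove $a_n\to A:=\|f\|_{L^\infty(\Omega)}$ I would argue as follows. For the upper bound, I would split $f_n=(f_n-f)+f$, apply the triangle inequality in $L^{p_n}(\Omega)$, and estimate each term using the finiteness of $|\Omega|$ (valid since $\Omega$ is bounded): $a_n\le |\Omega|^{1/p_n}\|f_n-f\|_{L^\infty(\Omega)}+|\Omega|^{1/p_n}\|f\|_{L^\infty(\Omega)}$. Since $|\Omega|^{1/p_n}\to 1$ and $\|f_n-f\|_{L^\infty(\Omega)}\to 0$ by (b), this gives $\limsup_n a_n\le A$. For the lower bound (which is nontrivial only when $A>0$), I would fix $\varepsilon\in(0,A)$, note that $E_\varepsilon:=\{x\in\Omega:\ |f(x)|>A-\varepsilon\}$ has positive Lebesgue measure by definition of the essential supremum, and observe that for $n$ large enough $\|f_n-f\|_{L^\infty(\Omega)}<\varepsilon$, hence $|f_n|>A-2\varepsilon$ a.e.\ on $E_\varepsilon$. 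Therefore $a_n\ge (A-2\varepsilon)\,|E_\varepsilon|^{1/p_n}$, and letting $n\to\infty$ yields $\liminf_n a_n\ge A-2\varepsilon$; as $\varepsilon$ is arbitrary, $\liminf_n a_n\ge A$. Combining the two bounds gives $a_n\to A$, which is what remained.

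There is essentially no serious obstacle here: the statement is a routine combination of the classical fact that on a set of finite measure the $L^{p}$ norm converges to the $L^\infty$ norm as $p\to\infty$ and of the uniform convergence $f_n\to f$ on $\Omega$. The only point requiring a little care is that $\mathbb{R}^N\setminus\Omega$ has infinite measure, so the behaviour of $f_n$ there cannot be controlled by an $L^\infty$ bound and must be taken as given through hypothesis (c); the two-term $\ell^{p_n}$ inequality displayed above is precisely the device that decouples the finite-measure region from the infinite-measure one, so that (c) can be used verbatim.
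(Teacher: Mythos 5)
Your proof is correct and follows essentially the same route as the paper's: split the norm over $\Omega$ and $\mathbb{R}^N\setminus\Omega$, show $\|f_n\|_{L^{p_n}(\Omega)}\to\|f\|_{L^\infty(\Omega)}$ via uniform convergence and the finite measure of $\Omega$, take the complement term from hypothesis (c), and recombine through the fact that $(a_n^{p_n}+b_n^{p_n})^{1/p_n}\to\max\{a,b\}$. The only difference is cosmetic: you supply explicit proofs (the $2^{1/p_n}$ squeeze and the positive-measure superlevel-set lower bound) of the two auxiliary facts that the paper simply invokes.
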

\begin{proof}
As a first step, recall that, since $\Omega$ is bounded,
\[\lim_{n\to \infty} \|f\|_{L^{p_n}(\Omega)}=\|f\|_{L^{\infty}(\Omega)}.\]

Moreover, observe that
\[ \left|\|f\|_{L^{\infty}(\Omega)}-\|f_n\|_{L^{p_n}(\Omega)}\right|\leq \left|\|f\|_{L^{\infty}(\Omega)}-\|f\|_{L^{p_n}(\Omega)}\right|+\|f-f_n\|_{L^{p_n}(\Omega)}.\]

Thus, given $\epsilon>0$, set $n_0\in \mathbb{N}$ for which if $n\geq n_0$ then
\[\left|\|f\|_{L^{\infty}(\Omega)}-\|f\|_{L^{p_n}(\Omega)}\right|<\frac{\epsilon}{2}, |\Omega|^{\frac{1}{p_n}}\leq 2 \mbox{ and } |f_n-f|<\dfrac{\epsilon}{4} \mbox{ a.e. in } \Omega.\]
By combining the latter inequalities we arrive at
\begin{equation}
\label{1057}
\displaystyle \lim_{n\to \infty} \|f_n\|_{L^{p_n}(\Omega)}=\|f\|_{L^\infty(\Omega)}.
\end{equation}
Finally, recall that given to sequences of non negative real numbers, $\{a_n\}\subset\mathbb{R}$ and ${b_n}$, where $a_n \to a$ and $b_n\to b$ we have
\[\lim_{n\to\infty}\left(a_n^{p_n}+b_n^{p_n}\right)^{\frac{1}{p_n}}=\max\{a,b\}.\]
Therefore the result follows by combining \eqref{1057} and the convergence given in $(c)$.
\end{proof}


\noindent{\bf Acknowledgments.}  The authors would like to thank Marcelo Fernandes Furtado for several insightful comments and suggestions throughout the elaboration of this manuscript. Stefano Buccheri 
Stefano Buccheri has been partially supported by Coordena\c{c}\~{a}o de Aperfei\c{c}oamento de Pessoal de N\'{i}vel Superior (PNPD/CAPES-UnB-Brazil), Grant 88887.363582/2019-00, and by the Austrian Science Fund (FWF) project F65.
 Jo\~{a}o Vitor da Silva have been partially supported by Coordena\c{c}\~{a}o de Aperfei\c{c}oamento de Pessoal de N\'{i}vel Superior (PNPD/CAPES-UnB-Brazil), Grant 88887.357992/2019-00 and by Conselho Nacional de Desenvolvimento Cient\'{i}fico e Tecnol\'{o}gico (CNPq-Brazil) under Grant No. 310303/2019-2. Lu\'{i}s Henrique de Miranda was partially supported by CNPq-Brazil, FAPDF-Brazil and FEMAT/DF Brazil, Grants 407952/2016-0, 22968.93.32974.22052018 and 01/2018.

{We would like to thank the anonymous Referee for insightful comments and suggestions which improved the final outcome of this manuscript}.

\end{document}